\DeclareMathOperator{\mce}{melC}
\DeclareMathOperator{\mcs}{mskC}
\DeclareMathOperator{\cl}{cl}
\DeclareMathOperator{\ccl}{cct}
\DeclareMathOperator{\Comp}{comp}
\newcommand{\expleend}{\hfill\vbox{\hrule height0.6pt\hbox{%
\vrule height1.3ex width0.6pt\hskip0.8ex%
\vrule width0.6pt}\hrule height0.6pt%
}}
\DeclareMathOperator{\act}{act}
\DeclareMathOperator{\inact}{inact}
\newcommand{\inta}{\mathop{\mathrm{int}}}
\newcommand{\exta}{\mathop{\mathrm{ext}}}
\newcommand{\insa}{\mathop{\mathrm{ins}}_{\prec}}
\newcommand{\outa}{\mathop{\mathrm{out}}_{\prec}}
\newcommand{\bG}{\mathbb{G}}
\newcommand{\vect}[1]{\mathbf{#1}}
\newcommand\sbullet{\mathbin{\vcenter{\hbox{\scalebox{0.4}{$\bullet$}}}}}
\newcommand{\dott}[1]{\accentset{\sbullet}{#1}}
\DeclareMathOperator{\ba}{\backslash}
\DeclareMathOperator{\con}{/}
\DeclareMathOperator{\IA}{\mathrm{IA}}
\DeclareMathOperator{\AO}{\mathrm{AO}}
\DeclareMathOperator{\AN}{\mathrm{AN}}
\title{An activities expansion of the transition polynomial of a multimatroid}
\author{Criel Merino\thanks{Instituto de Matem\'aticas, Universidad Nacional Aut\'onoma de M\'exico, 04510, M\'exico
(\email{merino@matem.unam.mx}) \funding{Supported by Grant CONACYT A1-S-8195}.}
\and
Iain Moffatt\thanks{Royal Holloway, University of London, Egham, TW20 0EX, UK (\email{iain.moffatt@rhul.ac.uk})
\funding{Supported the Engineering and Physical Sciences Research Council [grant number EP/W033038/1]}.}
\and
Steven Noble\thanks{University of Leeds, Woodhouse, Leeds,
LS2 9JT, UK
(\email{s.d.noble@leeds.ac.uk})\funding{Supported the Engineering and Physical Sciences Research Council [grant number EP/W032945/1].
\\[2mm] For the purpose of open access, the authors have applied a Creative Commons
Attribution (CC BY) licence to any Author Accepted Manuscript version arising. No underlying data is associated with this article.}}}
\begin{document}
\maketitle
\newcommand{\BibTeX}{{\scshape Bib}\TeX\xspace}
\begin{abstract}
 The weighted transition polynomial of a multimatroid is a generalization of the Tutte polynomial. By defining the activity of a skew class with respect to a basis in a multimatroid, we obtain an activities expansion for the weighted transition polynomial. 
 We also decompose the set of all transversals of a multimatroid as a union of subsets of transversals. Each term in the decomposition has the structure of a boolean lattice, and each transversal belongs to a number of terms depending only on the sizes of some of its skew classes. Further expressions for the transition polynomial of a multimatroid are obtained via an equivalence relation on its bases and by extending Kochol's theory of compatible sets.

 We apply our multimatroid results to obtain a result of Morse about the transition polynomial of a delta-matroid and get a partition of the boolean lattice of subsets of elements of a delta-matroid determined by the feasible sets.
 Finally, we describe how multimatroids arise from graphs embedded in surfaces and apply our results to obtain an activities expansion for the topological transition polynomial.
 Our work extends results for the Tutte polynomial of a matroid. 
 \end{abstract}

\begin{keywords}
activities expansion, delta-matroid,  multimatroid, ribbon graph,   transition polynomial, Tutte polynomial
\end{keywords}
\begin{MSCcodes}
05B35, 05C10, 05C31
\end{MSCcodes}

\section{Introduction}
Tutte's eponymous polynomial is perhaps the most widely studied two variable graph and matroid polynomial due to its many specializations, their
vast breadth and the richness of the underlying theory. 
Its specializations include the chromatic, flow and reliability polynomials from graph theory, the partition functions of the Ising and $q$-states Potts models from theoretical physics, and, via Tait graphs, the Jones polynomial of an alternating knot. Discussion of the Tutte polynomial and closely related polynomials fills an entire handbook~\cite{zbMATH07553843}.

Tutte first introduced the Tutte polynomial of a graph, as the \emph{dichromate} in~\cite{zbMATH03087501}. It is closely related to Whitney's rank
generating function~\cite{MR1503085} which Tutte and Crapo extended from graphs to matroids~\cite{MR262095,Tutte-thesis}. 
See Farr~\cite{Farr-Chapter} for more on the early history of the Tutte polynomial. 

The simplest definition of the Tutte polynomial $T(M;x,y)$ of a matroid $M$ is probably in terms of the rank function $r$ of $M$. (We assume familiarity with the basic notions of matroids. See~\cite{MR1207587}
for more information.)
\begin{definition}
For a matroid $M=(E,r)$, we let
\[ T(M;x,y) :=   \sum_{A\subseteq E} (x-1)^{r(E)-r(A)} (y-1)^{|A|-r(A)}.\]
\end{definition}
Tutte's original definition~\cite{zbMATH03087501} used the notion of activities.
Take a matroid $M$ with element set $E$ and fix a total ordering $\prec$ of $E$. Given a basis $B$ of $M$, for each element $e$ of $E-B$, there is a unique circuit, $C(B,e)$ such that $C(B,e)\subseteq
B\cup \{e\}$. 
The circuit $C(B,e)$ is called the \emph{fundamental circuit of $e$ with respect to $B$.}
An element $e$ of $E-B$ is \emph{externally active} with respect to $B$ if $e$ is the least element of $C(B,e)$ with respect to $\prec$.
Similarly, for each element $e$ of $B$, there is a unique cocircuit, $C^*(B,e)$ such that $C^*(B,e)\subseteq (E-B)\cup \{e\}$. 
The cocircuit $C^*(B,e)$ is called the \emph{fundamental cocircuit of $e$ with respect to $B$.}
An element $e$ of $B$ is
\emph{internally active} with respect to $B$ if $e$ is the least element of $C^*(B,e)$ with respect to $\prec$.
The collections of internally and externally active elements with respect to $B$ are denoted by $\inta_{M,\prec}(B)$ and $\exta_{M,\prec}(B)$, respectively. We usually omit the subscript $M$ whenever it is clear from context. Given that,
in general, $|{\inta_{\prec}(B)}|$ and $|{\exta_{\prec}(B)}|$ depend on $\prec$, the following result proved by Tutte~\cite{zbMATH03087501} for graphs and extended by Crapo~\cite{MR262095} to matroids is remarkable. Let $\mathcal B(M)$ denote the collection of bases of a matroid $M$.
\begin{theorem}[Tutte~\cite{zbMATH03087501}, Crapo~\cite{MR262095}]\label{thm:tutteact}
Let $M$ be a matroid
and let $\prec$ be a total ordering of its elements. Then
\[ T(M;x,y) = \sum_{B\in \mathcal B(M)} x^{|{\inta_{\prec}(B)}|}y^{|{\exta_{\prec}(B)}|}.\]
\end{theorem}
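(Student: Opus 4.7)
The plan is to prove the identity by induction on $|E|$ via deletion-contraction, showing that $A(M;\prec) := \sum_{B \in \mathcal{B}(M)} x^{|\inta(B)|} y^{|\exta(B)|}$ satisfies the same recursions as $T(M;x,y)$. To streamline the case analysis, I would let $e$ denote the $\prec$-maximum element of $E$ and aim to prove $A(M) = A(M\setminus e) + A(M/e)$ when $e$ is neither loop nor coloop, $A(M) = x\, A(M/e)$ when $e$ is a coloop, and $A(M) = y\, A(M\setminus e)$ when $e$ is a loop, with the inherited ordering on the minors. The base case $|E|=0$ is immediate.

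The recursion for $A$ reduces to two observations. First, because $e$ is $\prec$-maximum, it is internally active with respect to a basis $B \ni e$ precisely when $C^*(B,e) = \{e\}$ (i.e.\ $e$ is a coloop), and externally active with respect to $B \not\ni e$ precisely when $C(B,e) = \{e\}$ (i.e.\ $e$ is a loop). Second, for any $f \neq e$, the fundamental (co)circuit of $f$ with respect to $B$ in $M$ differs from its counterpart in $M/e$ or $M\setminus e$ at most by the presence of $e$; but since $e$ is $\prec$-maximum it is never the $\prec$-minimum of any (co)circuit with more than one element, so the activity status of $f$ is preserved under the natural bijections $\{B \in \mathcal{B}(M) : e \in B\} \leftrightarrow \mathcal{B}(M/e)$ and $\{B \in \mathcal{B}(M) : e \notin B\} \leftrightarrow \mathcal{B}(M\setminus e)$. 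The loop and coloop cases then follow immediately (a loop contributes a factor of $y$ to every term; a coloop contributes a factor of $x$), and in the generic case partitioning $\mathcal{B}(M)$ according to whether $B$ contains $e$ yields $A(M) = A(M/e) + A(M\setminus e)$, matching the Tutte recursion.

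I expect the main obstacle to be the careful bookkeeping needed to transfer fundamental (co)circuits between $M$ and its minors at $e$: the argument must handle several subcases (deletion versus contraction, $f$ in or out of $B$, and whether the relevant fundamental (co)circuit of $f$ contains $e$), though each individual check is routine. A conceptually cleaner alternative, more in keeping with the boolean-lattice decomposition theme announced in the abstract, is to partition $2^E$ into the intervals $[B \setminus \inta(B),\, B \cup \exta(B)]$ indexed by $B \in \mathcal{B}(M)$ and to show that each interval contributes exactly $x^{|\inta(B)|} y^{|\exta(B)|}$ to the rank-nullity sum defining $T(M;x,y)$. The crucial lemma for that route is that $\inta(B) \cap C(B,e) = \emptyset$ for every $e \in \exta(B)$, which follows from the fundamental (co)circuit duality $f \in C(B,e) \Leftrightarrow e \in C^*(B,f)$ combined with the incompatible ordering constraints imposed by internal and external activity; the harder half is to produce, for each $A \subseteq E$, the unique basis $B$ whose interval contains $A$, which can be done by a greedy sweep through $E$ in $\prec$-order.
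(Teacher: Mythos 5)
Your argument is correct, but it is a genuinely different route from the one the paper takes. The paper treats Theorem~\ref{thm:tutteact} as the classical Tutte--Crapo result and effectively re-proves it only later, in Example~\ref{eg:matcont}: there the matroid $M$ is encoded as the $2$-matroid $Z(M)$, Theorem~\ref{thm:main1} gives an activities expansion of the weighted transition polynomial $Q(Z(M);\vect x,t)$ over bases, this is equated with the subset (corank--nullity) expansion of $Q(Z(M);\vect x,t)$, and the substitution $u=\sqrt{y}$, $v=\sqrt{x}$, $t=\sqrt{xy}$ together with a polynomial-identity argument recovers $T(M;x+1,y+1)=\sum_{B}(x+1)^{|{\inta(B)}|}(y+1)^{|{\exta(B)}|}$; the engine behind Theorem~\ref{thm:main1} is itself an induction via Proposition~\ref{prop:delcon}, the multimatroid analogue of deletion--contraction. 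You instead work entirely inside matroid theory: a deletion--contraction induction at the $\prec$-maximum element $e$, using that such an $e$ is active only when it is a loop or coloop, and that fundamental circuits and cocircuits of $f\neq e$ transfer to $M/e$ and $M\setminus e$ changing at most by the presence of $e$ (which, being $\prec$-maximal, cannot affect minima). Those transfer facts are standard and your case analysis goes through, so your proof is self-contained and elementary, essentially Tutte's original argument; what the paper's route buys is that the classical theorem appears as a specialization of a much more general statement in which internal and external activity merge into a single notion of an active skew class, and the same machinery then yields the delta-matroid and ribbon-graph expansions. Your sketched alternative via the intervals $[B-\inta(B),\,B\cup\exta(B)]$ is closer in spirit to Crapo's Theorem~\ref{thm:partition} and to the paper's Theorem~\ref{thm:maingen2}, but as you note it needs the extra step of evaluating each interval's contribution to the corank--nullity sum, not just the partition of $2^{E}$.
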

Setting $x=y=2$, we get the following.
\begin{corollary}\label{cor:x=y=2}
Let $M$ be a matroid
and let $\prec$ be a total ordering of its elements. Then
\[  \sum_{B\in \mathcal B(M)} 2^{|{\inta_{\prec}(B)}|+|\exta_{\prec}(B)|}=2^{|E|}.\]
\end{corollary}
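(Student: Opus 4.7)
The plan is to evaluate $T(M;2,2)$ in two different ways and equate the results. First, I would substitute $x=y=2$ directly into Theorem~\ref{thm:tutteact}; this immediately produces the left-hand side of the claimed identity, namely $\sum_{B\in\mathcal B(M)} 2^{|{\inta(B)}|+|{\exta(B)}|}$, with no dependence on $\prec$ needed beyond what the theorem already provides.

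Next, I would compute $T(M;2,2)$ using the rank-generating-function definition of the Tutte polynomial given just before Theorem~\ref{thm:tutteact}. Setting $x=y=2$ makes both bases $x-1$ and $y-1$ equal to $1$, so every exponent becomes irrelevant and each subset $A\subseteq E$ contributes exactly $1$ to the sum. Hence
\[ T(M;2,2) = \sum_{A\subseteq E} 1^{r(E)-r(A)}\, 1^{|A|-r(A)} = \sum_{A\subseteq E} 1 = 2^{|E|}. \]
Combining the two evaluations yields the corollary.

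There is no real obstacle here: the result is a direct substitution into the two definitions of $T(M;x,y)$, relying on the nontrivial content already packaged inside Theorem~\ref{thm:tutteact}. The only thing worth noting is the combinatorial reading of the identity, which the later sections presumably motivate and generalise: the subsets of $E$ are partitioned, via each fixed ordering $\prec$, according to which basis they are associated with through the active/inactive structure, and each basis $B$ accounts for exactly $2^{|{\inta(B)}|+|{\exta(B)}|}$ subsets. This interpretation is the statement the authors will most likely want to lift to the multimatroid setting, but it is not required to establish the corollary itself.
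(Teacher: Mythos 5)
Your proof is correct and matches the paper's (implicit) argument exactly: the paper obtains the corollary by setting $x=y=2$ in Theorem~\ref{thm:tutteact} and in the rank-function definition of $T(M;x,y)$, which is precisely your two evaluations of $T(M;2,2)$.
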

Crapo gave a nice bijective proof of this identity. For any basis $B$ of $M$ consider the Boolean interval $I(B):=[B-\inta_{\prec}(B), B\cup \exta_{\prec}(B)]$, that is,
the family of sets $X$ with $B-\inta_{\prec}(B) \subseteq X \subseteq B\cup \exta_{\prec}(B)$. Then we have the following theorem of Crapo from~\cite{MR262095}.

\begin{theorem}[Crapo~\cite{MR262095}]\label{thm:partition}
Let $M$ be a matroid with element set $E$
and let $\prec$ be a total ordering of $E$. Then the
collection of Boolean intervals $\{ I(B): B \in \mathcal B\}$ forms a partition of $2^{E}$.
\end{theorem}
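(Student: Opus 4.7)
The plan is to combine a counting argument with an explicit construction. First, since $\inta(B) \subseteq B$ and $\exta(B) \subseteq E - B$, these two active sets are disjoint, and so the Boolean interval has cardinality $|I(B)| = 2^{|\inta(B)| + |\exta(B)|}$. Summing over bases and applying Corollary~\ref{cor:x=y=2} gives $\sum_{B \in \mathcal B(M)} |I(B)| = 2^{|E|}$. Consequently, to prove that the intervals partition $2^E$ it suffices to show that every $A \subseteq E$ lies in at least one interval $I(B)$; the matching total cardinality then forces each $A$ to lie in exactly one.

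To produce such a basis $B = B(A)$ from a given $A \subseteq E$, I would run a two-phase greedy procedure driven by $\prec$. In phase (a), process the elements of $A$ in \emph{decreasing} $\prec$-order, adding each to a growing independent set $B_A \subseteq A$ whenever the result remains independent. In phase (b), process the elements of $E - A$ in \emph{increasing} $\prec$-order, extending $B_A$ greedily to a basis $B$ of $M$. Standard matroid greedy arguments show that $B$ is a basis with $B \cap A = B_A$.

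To check that $A \in I(B)$ I verify $A \setminus B \subseteq \exta(B)$ and $B \setminus A \subseteq \inta(B)$. For the first, if $a \in A \setminus B$ then phase (a) found $B_A \cup \{a\}$ dependent, yielding a circuit $C \subseteq B \cup \{a\}$; by uniqueness of the fundamental circuit, $C = C(B,a)$, and all elements of $C - \{a\}$ lie in $B_A$ and were processed before $a$, hence are $\succ a$, giving $a = \min C(B,a)$. For the second, take $b \in B \setminus A$ and any $f \prec b$ with $f \notin B$, and split on whether $f \in A$: if $f \in A$, phase (a) witnesses $f \in \cl(\{a' \in A \cap B : a' \succ f\}) \subseteq \cl(B - \{b\})$ since $b \notin A$; if $f \notin A$, phase (b) processes $f$ strictly before $b$, so the current independent set spanning $f$ sits inside $B - \{b\}$. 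Either way $f \in \cl(B - \{b\})$, so $f \notin C^*(B,b)$, and therefore $b = \min C^*(B,b)$.

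The main obstacle is the second half of this last verification: one must carefully track which subset of $B$ is present at the moment an element is skipped in each phase, and then translate the greedy skip certificates into statements about fundamental circuits and cocircuits via their uniqueness. The reverse-order processing in phase (a) is what converts the dependence certificate for a skipped $a \in A$ into the min-of-circuit condition, while the forward-order processing in phase (b) is what makes $b$ come after every relevant $f$ so the dependence certificate for $f$ avoids $b$.
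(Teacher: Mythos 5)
Your proposal is correct, but it takes a different route from anything in the paper. The paper never proves Theorem~\ref{thm:partition} directly: it attributes it to Crapo and later recovers it as the matroid ($q=2$) special case of the multimatroid statement, Theorem~\ref{thm:maingen2} (via Theorem~\ref{thm:delta2}), which is proved by induction on the number of skew classes, splitting on whether the greatest skew class is singular and passing to the minors $Z\ba\gamma$ or $Z|u$. You instead reduce the partition statement to a coverage statement by counting: $|I(B)|=2^{|{\inta(B)}|+|{\exta(B)}|}$ together with Corollary~\ref{cor:x=y=2} gives $\sum_B|I(B)|=2^{|E|}$, so it suffices that every $A\subseteq E$ lies in some $I(B)$, and you establish coverage with the two-phase greedy construction of $B(A)$. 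Your verification is sound: the skip certificate in phase (a) sits inside $B\cup\{a\}$, so uniqueness of the fundamental circuit gives $C=C(B,a)$ with all other elements $\succ a$; and for $b\in B\setminus A$ the characterization $f\notin C^*(B,b)\iff f\in\cl(B-\{b\})$ (for $f\notin B$) combined with the fact that in either phase the current independent set spanning a skipped $f\prec b$ avoids $b$ gives $b=\min C^*(B,b)$. Two remarks on what each approach buys. First, your argument leans on Theorem~\ref{thm:tutteact} through Corollary~\ref{cor:x=y=2}; this is not circular, since that theorem has independent deletion--contraction proofs, but it makes your proof less self-contained than Crapo's purely bijective one, for which your greedy map $A\mapsto B(A)$ is in effect the explicit inverse (so with a little extra work you could prove uniqueness directly and dispense with the counting). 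Second, the counting trick ``coverage plus matched total cardinality forces disjointness'' is special to the matroid setting: in the multimatroid generalization the sets $H_Z(B)$ do not partition the transversals (each transversal lies in $\prod_{\omega\in\mcs(T,\prec)}(|\omega|-1)$ of them by Theorem~\ref{thm:maingen2}), so your method would not transfer, whereas the paper's inductive, minor-based argument is exactly what does.
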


Dawson~\cite{zbMATH03821756} showed that the existence of such a partition into Boolean intervals does not actually require matroid structure. In fact such a partition exists for any family of sets, although clearly in this more general setting the matroidal description of the Boolean intervals is lost. One of the main aims of this paper is to show that an analogue of Theorem~\ref{thm:partition} reflecting the underlying structure holds in the setting of multimatroids.

\smallskip

Our interest in multimatroids  comes  from topological graph theory (i.e., the study of graphs embedded in surfaces) via delta-matroids. 
Delta-matroids were  introduced independently by several authors~\cite{MR904585,zbMATH04070920,zbMATH03985246} but principally studied by 
Bouchet who showed that every embedded graph has an associated delta-matroid in the same way as every graph has an associated matroid~\cite{zbMATH04185622,CMNR1}. We give the formal definition of a delta-matroid later in Section~\ref{sec:dm}, but roughly speaking delta-matroids generalize
matroids by allowing the bases to have differing sizes.
Some important classes of delta-matroids mirror classes of matroids, for example, those coming from embedded graphs and those coming from binary matrices. But, in contrast with graphic matroids, the class of delta-matroids coming from embedded graphs can also be realized as a class defined in terms of Eulerian circuits in $4$-regular graphs~\cite{zbMATH04185622}. As the delta-matroid of an embedded graph with one vertex may be obtained from its intersection graph and these graphs are circle graphs, there is also a class of delta-matroids associated with circle graphs and these form a subclass of those coming from embedded graphs. For more information on these classes see~\cite{zbMATH07307106,me}.

 In topological graph theory nonorientable surfaces present additional challenges compared with orientable surfaces. 
In~\cite{CMNR2} we studied how nonorientability  
impacts the delta-matroid of an embedded graph but here we show that to fully understand the basis structure of a graph embedded in a nonorientable surface we need to consider a more general structure, namely, a multimatroid (see Section~\ref{sec:multim} for a definition). These were introduced by Bouchet in the 1990s
~\cite{MM1zbMATH01116184,MM2zbMATH01119073,MM3zbMATH01648954}, extending and unifying delta-matroids and 
isotropic systems~\cite{zbMATH04045764},
another of his earlier combinatorial structures.    (Very roughly, isotropic systems generalize circuit spaces of binary matroids.) 

Special classes of multimatroids include
$1$-matroids which correspond precisely to matroids and $2$-matroids which are equivalent (in a sense we will make precise in~Section~\ref{sec:dm}) to delta-matroids.
Thus, because every matroid is a delta-matroid, matroids are $1$-matroids and can also be seen as $2$-matroids. A subclass of $3$-matroids is equivalent to isotropic systems but of more relevance to us is that $3$-matroids of embedded graphs allow us, in some sense, to capture when edges of graphs embedded in a nonorientable surface go over crosscaps.

\smallskip

In this paper we are interested in basis activities in multimatroids. 
Our first two main results extend Theorems~\ref{thm:tutteact} and~\ref{thm:partition} to multimatroids. It is, however, more natural to describe our results in terms of the transition polynomial~\cite{MR3191496} rather than the Tutte polynomial.
For classes of objects where these are both defined, they are related by a substitution of variables. Moreover, the transition polynomial of an embedded graph includes Bollob\'as and Riordan's 2-variable ribbon graph polynomial~\cite{MR1851080,MR1906909} as a specialization.
Theorem~\ref{thm:main1} extends Crapo and Tutte's Theorem~\ref{thm:tutteact} by expressing the transition polynomial of a multimatroid as a sum over its bases.
This expansion of the transition polynomial launches us into the study of bases with our second main result, Theorem~\ref{thm:maingen2}, extending Crapo's partition into  Boolean intervals in Theorem~\ref{thm:partition} to multimatroids.   

Recent results of Kochol~\cite{zbMATH07725327} suggest an alternative approach to obtain an expression for the transition polynomial. In Section~\ref{s:kochol}, we generalize the main ideas of Kochol by defining a notion of compatibility for multimatroids and obtaining a corresponding expression for the transition polynomial in Theorem~\ref{thm:cocomp}, our third main result. 

The notion of compatibility for multimatroids allows us to define an equivalence relation $\sim$ on the transversals of a multimatroid. 
In the second half of Section~\ref{s:kochol} we relate the equivalence classes of $\sim$ to the sets appearing in Theorem~\ref{thm:maingen2} which leads to Theorem~\ref{thm:equiv}, our fourth main result, which in turn leads to another expression for the transition polynomial.

In the remainder of the paper we exploit Theorems~\ref{thm:main1} and~\ref{thm:maingen2}. In Section~\ref{sec:dm} we use the equivalence of $2$-matroids and delta-matroids to derive Theorem~\ref{thm:delta1} from Theorem~\ref{thm:main1}, recovering the main result of Morse in~\cite{zbMATH07067836}. We then apply Theorem~\ref{thm:maingen2} to construct a partition of the subsets of the elements of a delta-matroid into Boolean intervals in Theorem~\ref{thm:delta2}, making concrete implicit underlying ideas in~\cite{zbMATH07067836}. 

Finally, we return to our original motivation by collecting ideas from the literature to show that there is a $3$-matroid associated with every embedded graph, giving Theorem~\ref{newtt1}. We use this to prove 
 an expression for the transition polynomial of an embedded graph~\cite{MR2869185} which follows by combining Theorems~\ref{thm:main1} and~\ref{thm:newrg3} with a careful interpretation of how active edges are manifest in an embedded graph.

\section{Multimatroids}\label{sec:multim}
Multimatroids generalize matroids.  
They were introduced by Bouchet in~\cite{MM1zbMATH01116184}. They are defined on what he called a \emph{carrier}, a pair
$(U,\Omega)$ where $U$ is a finite set and $\Omega$ is a partition of $U$ into (non-empty) blocks called \emph{skew classes}. If each skew class
has size $q$, then $(U,\Omega)$ is a \emph{$q$-carrier}.
A multimatroid is said to be \emph{non-degenerate} if each of its skew classes has cardinality at least two. 
The first sentence in each of Examples~\ref{examplenodot} and~\ref{example} offers examples of carriers. 

In order to prepare the reader for the definitions which follow, we begin by giving a hint of how multimatroids generalize matroids.
We shall see that $q$-carriers give rise to a class of multimatroids called $q$-matroids. 1-matroids are exactly matroids. But there is also a correspondence between matroids and a class of 2-matroids that turns out to be more useful. 
In Example~\ref{eg:matroid}, we describe how to form a $2$-matroid from a matroid. In this example, every element of the matroid gives rise to a skew class in the multimatroid. Thus there is a bijection between the elements of the matroid and the skew classes of the corresponding multimatroid. More generally, 
we shall see that given a multimatroid defined on a carrier $(U,\Omega)$, it is helpful to think of the skew classes, rather than the elements of $U$, as playing a role analogous to the elements of a matroid, and the elements of each skew class representing different `states' that the skew class might play.

A pair of distinct elements belonging to the same skew class is called a \emph{skew pair}. A \emph{subtransversal} of $\Omega$ is a subset of $U$
meeting each skew class at most once. A \emph{transversal} $T$ of $\Omega$ is a subtransversal of $\Omega$ with $|T|=|\Omega|$, that is, a set containing precisely one element from each skew class. Given a subtransversal $S$ and a skew class $\omega$ meeting $S$, we let $S_{\omega}$ denote the unique element of $S\cap \omega$.
The sets of
subtransversals and transversals of $\Omega$ are denoted by $\mathcal S(\Omega)$ and $\mathcal T(\Omega)$ respectively.

A \emph{multimatroid} is a triple $Z=(U,\Omega,r_Z)$ so that $(U,\Omega)$ is a carrier and $r_Z$ is a \emph{rank function},
a non-negative integral function defined on $S(\Omega)$
and satisfying the two axioms below. 
\begin{enumerate}[label = {(R\arabic*)}]
\item\label{axiom1} For each transversal $T$ of $\Omega$, the pair comprising $T$ and the restriction of $r_Z$ to subsets of $T$ forms a matroid.
\item\label{axiom2} For each subtransversal $S$ of $\Omega$ and skew pair $\{x,y\}$ from  a skew class disjoint from $S$, we have
    \[ r_Z(S\cup \{x\}) + r_Z(S\cup \{y\}) - 2r_Z(S) \geq 1.\]
\end{enumerate}
We generally omit the subscript $Z$ from $r_Z$ whenever the context is clear.  Axiom~\ref{axiom1} implies the following: $r(\emptyset)=0$; for $A\in\mathcal S(\Omega)$ and  $x\in U$ such that $A\cup\{x\}\in\mathcal S(\Omega)$, $r(A)\leq r(A\cup\{x\})\leq r(A)+1$; and for $A$, $B\in\mathcal S(\Omega)$ such that $A\cup B\in\mathcal S(\Omega)$, $r(A\cup B)+r(A\cap B)\leq r(A)+r(B)$.

 It is convenient to extend the concepts we have defined for
carriers to multimatroids. For example, by a transversal of a multimatroid we mean a transversal of its underlying carrier,
and we sometimes use $\mathcal T(Z)$ to denote the set of transversals of the carrier of $Z$.
We say that a multimatroid is a \emph{$q$-matroid} if its carrier is a $q$-carrier.
The special case of $2$-matroids was first introduced earlier than general multimatroids in~\cite{MR904585} under the name
\emph{symmetric matroids}. The \emph{order} of a carrier $(U,\Omega)$, denoted by $|\Omega|$,
is its number of skew classes. If $S$ is a subtransversal of $Z$, then we define its \emph{nullity} $n_Z(S)$ by $n_Z(S)=|S|-r_Z(S)$, again omitting
the subscript when the context is clear.

An \emph{independent set} of $Z$ is a subtransversal $S$ with $n(S)=0$. Subtransversals that are not independent are \emph{dependent}. A
\emph{basis} is a maximal independent subtransversal.
A \emph{circuit} is a minimal dependent subtransversal. Given a carrier $(U,\Omega)$, a multimatroid on $(U,\Omega)$ is determined by each of its collections of independent sets, circuits and bases~\cite{MM1zbMATH01116184}. The set of bases and circuits of $Z$ are denoted by $\mathcal B(Z)$ and $\mathcal C(Z)$, respectively.
Observe that for any subtransversal $S$, we have $r(S)=\max_{B\in\mathcal B(Z)} |B\cap S|$.
The following are straightforward consequences of~\ref{axiom1} and~\ref{axiom2}.
\begin{proposition}[{\cite[Proposition~5.5]{MM1zbMATH01116184}}]\label{prop:basestrans}
The bases of a non-degenerate multimatroid $Z$ are transversals of $Z$.
\end{proposition}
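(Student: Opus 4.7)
The plan is to argue by contradiction using axiom~\ref{axiom2}, which is exactly designed to force additional elements into maximal independent sets whenever a skew class is available.

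Suppose $B$ is a basis of $Z$, so $B$ is a maximal independent subtransversal. Aiming for a contradiction, assume $B$ is not a transversal. Then there is a skew class $\omega \in \Omega$ disjoint from $B$. Since $Z$ is non-degenerate, $|\omega| \geq 2$, so we may pick distinct $x, y \in \omega$ forming a skew pair with $\omega$ disjoint from $B$. Now apply~\ref{axiom2} to $B$ and $\{x,y\}$ to obtain
\[ r(B \cup \{x\}) + r(B \cup \{y\}) - 2 r(B) \geq 1. \]

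Next, I would invoke the consequences of~\ref{axiom1} recorded in the paragraph after the axioms, namely that adding a single element to a subtransversal increases the rank by either $0$ or $1$. This forces at least one of $r(B \cup \{x\}) = r(B)+1$ or $r(B \cup \{y\}) = r(B)+1$ (otherwise the left-hand side above is at most $0$). Without loss of generality, say $r(B \cup \{x\}) = r(B) + 1$. Since $B$ is independent, $r(B) = |B|$, so $r(B \cup \{x\}) = |B| + 1 = |B \cup \{x\}|$, meaning $B \cup \{x\}$ is independent. Because $\omega$ was disjoint from $B$, the set $B \cup \{x\}$ remains a subtransversal, contradicting the maximality of $B$.

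Hence no such skew class $\omega$ can exist, so $B$ meets every skew class, and since $B$ is already a subtransversal it meets each exactly once. This shows $B$ is a transversal, completing the proof. There is no real obstacle here: axiom~\ref{axiom2} and the rank-jump property from~\ref{axiom1} do all the work, and non-degeneracy is used in precisely one place, to guarantee the existence of the second element needed to form the skew pair $\{x,y\}$.
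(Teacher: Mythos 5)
Your proof is correct and is precisely the argument the paper has in mind when it calls this a straightforward consequence of~\ref{axiom1} and~\ref{axiom2} (the paper itself only cites Bouchet rather than reproving it): non-degeneracy supplies the skew pair in the missed skew class, \ref{axiom2} forces a rank increase for at least one of its elements, and the unit-increase property from~\ref{axiom1} then yields a larger independent subtransversal, contradicting maximality. No gaps.
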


\begin{proposition}[{\cite[Proposition~5.7]{MM1zbMATH01116184}}]\label{prop:onlyonecircuit}
Let $B$ be a basis and $\omega$ be a skew class of a multimatroid. Then $B\cup \omega$ contains at most one circuit.
\end{proposition}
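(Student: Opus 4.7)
The plan is to argue by contradiction. Suppose $C_1 \ne C_2$ are two distinct circuits contained in $B \cup \omega$. Each $C_i$, being a subtransversal, meets $\omega$ in at most one element; and if $C_i \cap \omega = \emptyset$ then $C_i \subseteq B$, so $C_i$ would be independent, contradicting that it is a circuit. Hence I may write $C_i \cap \omega = \{x_i\}$ with $x_i \in \omega$ and $C_i \subseteq (B\setminus\omega)\cup\{x_i\}$. The set $B\setminus\omega$ sits inside $B$ and so is independent with rank $|B\setminus\omega|$; and since $(B\setminus\omega)\cup\{x_i\}$ contains the dependent set $C_i$ but adds only one element to $B\setminus\omega$, the rank consequences of \ref{axiom1} noted immediately after the axioms force $r((B\setminus\omega)\cup\{x_i\}) = |B\setminus\omega|$.

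I then split into two cases. If $x_1 \ne x_2$, then $\{x_1,x_2\}$ is a skew pair in $\omega$ and $S := B\setminus\omega$ is a subtransversal disjoint from $\omega$, so \ref{axiom2} applies to give $r(S\cup\{x_1\}) + r(S\cup\{x_2\}) - 2r(S) \ge 1$; substituting the rank values just computed collapses this to $0 \ge 1$, a contradiction.

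If instead $x_1 = x_2 =: x$, then both circuits lie in the subtransversal $A := (B\setminus\omega)\cup\{x\}$. I extend $A$ to a transversal $T$ by picking one element from each skew class not yet met by $A$, which is possible because every skew class is non-empty. By \ref{axiom1}, the restriction of $r$ to subsets of $T$ is the rank function of a matroid $M$ on $T$, and since any subset of $T$ is automatically a subtransversal of $Z$, each $C_i$ is a minimal dependent subset of $T$ both in $Z$ and in $M$, hence a circuit of $M$. Inside $M$, the independent set $B\setminus\omega$ has cardinality equal to $r(A)$ and therefore is a basis of $M|_A$; so $M|_A$ has a unique circuit, namely the fundamental circuit of $x$ with respect to $B\setminus\omega$. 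This forces $C_1 = C_2$, again a contradiction. The main obstacle will be this last case: one has to check carefully that a circuit of the multimatroid that happens to lie in $T$ really is a circuit of the matroid produced by \ref{axiom1}, and that the arbitrary choice of extension to $T$ does not affect the conclusion.
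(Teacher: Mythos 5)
Your proof is correct. Note that the paper itself does not prove this proposition: it is quoted from Bouchet (Proposition~5.7 of the cited multimatroid paper), so there is no in-paper argument to compare against; your write-up supplies a valid self-contained proof. The structure is the natural one: after reducing to two circuits $C_i\subseteq (B\setminus\omega)\cup\{x_i\}$ with $r\bigl((B\setminus\omega)\cup\{x_i\}\bigr)=|B\setminus\omega|$, the case $x_1\ne x_2$ is killed by \ref{axiom2} applied to the skew pair $\{x_1,x_2\}$ and $S=B\setminus\omega$, and the case $x_1=x_2$ reduces, via \ref{axiom1} and extending $(B\setminus\omega)\cup\{x\}$ to a transversal, to the uniqueness of the fundamental circuit in a matroid. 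The two worries you flag at the end are indeed fine: subsets of a transversal $T$ are automatically subtransversals and the matroid given by \ref{axiom1} has rank function equal to the restriction of $r$, so dependent/independent and hence circuits agree for sets inside $T$; and the choice of extension of $A$ to $T$ is irrelevant because the whole argument takes place inside $A$, where $B\setminus\omega$ is a basis of the restriction regardless of how $T$ was completed.
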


A circuit of the form described in the previous proposition is a \emph{fundamental circuit}~\cite{MM3zbMATH01648954} with respect to $B$ and is denoted by $C(B,\omega)$.

Let $Z=(U,\Omega,r)$ and let $u$ be in $U$. Then the \emph{elementary minor}~\cite{MM2zbMATH01119073} of $Z$ by $u$, denoted by $Z |u$, is the triple
$(U',\Omega',r')$ with $\Omega'=\{\omega \in \Omega: u\notin \omega\}$, $U'=\bigcup_{\omega\in \Omega'}\omega$
and $r'(S)=r(S\cup\{u\})-r(\{u\})$, $S\in\mathcal S(\Omega')$.
It is straightforward to show that $Z|u$ is a multimatroid.

Let $Z=(U,\Omega,r)$ be a multimatroid and $\omega$ be a skew class of $\Omega$.
We let $Z \ba \omega$ denote the \emph{restriction} of $Z$ to $U-\omega$~\cite{MM1zbMATH01116184}, comprising the triple
$(U-\omega,\Omega-\{\omega\},r_{Z\ba \omega})$,
 where
$r_{Z\ba \omega}$ is the restriction of $r$ to subtransversals of $\Omega- \{\omega\}$.
It is straightforward to show that $Z\ba \omega$ is a multimatroid.
It follows immediately that if $S$ is a subtransversal of $\Omega- \{\omega\}$, then
$n_{Z\ba \omega}(S)=n_Z(S)$.

An element $e$ of a multimatroid $(U,\Omega,r)$ is \emph{singular} if $r(\{e\})=0$. A skew class is \emph{singular} if it contains a singular element.
We establish a few simple properties of singular elements and skew classes.
\begin{lemma}\label{lem:rankssingular}
Let $e$ be a singular element of a multimatroid $(U,\Omega,r)$. 
Suppose that the skew class containing $e$ is $\omega$ and that $S$ is a subtranversal of $\Omega$ such that $S \cap \omega=\emptyset$. Then $r(S\cup \{e\})=r(S)$ and $r(S\cup \{u\})=r(S)+1$, for all $u\in \omega-\{e\}$.
\end{lemma}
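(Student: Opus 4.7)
The proof plan has two parts, corresponding to the two equalities. Both will follow directly from the axioms and the stated consequences of \ref{axiom1}, with essentially no obstacle of substance; the only thing to handle carefully is keeping track of which rank function we are computing with.

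For the first equality $r(S\cup\{e\}) = r(S)$, the plan is to extend $S\cup\{e\}$ to a transversal $T$ of $\Omega$, which is possible because $(S\cup\{e\})\cap\omega = \{e\}$ and $S$ meets each other skew class at most once. By axiom \ref{axiom1}, the restriction of $r_Z$ to subsets of $T$ is the rank function of a matroid $M_T$. In $M_T$ the element $e$ satisfies $r_{M_T}(\{e\}) = r_Z(\{e\}) = 0$, so $e$ is a loop. Hence in $M_T$, and therefore for $r_Z$, we have $r(S\cup\{e\}) = r(S)$.

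For the second equality, I would first note the upper bound $r(S\cup\{u\}) \leq r(S)+1$, which is the monotonicity statement spelt out after the axioms as a consequence of \ref{axiom1}. For the matching lower bound, apply axiom \ref{axiom2} to the subtransversal $S$ (which is disjoint from $\omega$ by hypothesis) and the skew pair $\{e,u\}$ in $\omega$:
\[ r(S\cup\{e\}) + r(S\cup\{u\}) - 2r(S) \geq 1. \]
Substituting the first equality $r(S\cup\{e\}) = r(S)$ into this inequality gives $r(S\cup\{u\}) \geq r(S)+1$, and combined with the upper bound we obtain $r(S\cup\{u\}) = r(S)+1$, as required. This completes the proof; the only care needed is checking the disjointness hypothesis required for \ref{axiom2}, which is immediate.
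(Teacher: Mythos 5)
Your proposal is correct and follows essentially the same route as the paper: the first equality comes from \ref{axiom1} (the paper simply applies the submodularity consequence $r(S\cup\{e\})\leq r(S)+r(\{e\})=r(S)$ rather than extending to a transversal and invoking the loop property, which is the same fact in substance), and the second equality is obtained exactly as you do, from \ref{axiom2} together with the unit-increase bound.
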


\begin{proof}
By \ref{axiom1} we have $r(S)\leq r(S\cup \{e\}) \leq r(S)+r(\{e\}) = r(S)$, giving $r(S\cup \{e\})=r(S)$. Now, by \ref{axiom2}, $r(S\cup \{u\})=r(S)+1$.
\end{proof}

The following results of Bouchet follow immediately.
\begin{proposition}[{\cite[Proposition~5.4]{MM2zbMATH01119073}}]
A singular skew class contains precisely one singular element.
\end{proposition}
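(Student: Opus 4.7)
The plan is to derive the proposition as a near-immediate corollary of the preceding Lemma~\ref{lem:rankssingular}. Let $\omega$ be a singular skew class of the multimatroid $(U,\Omega,r)$ and fix a singular element $e \in \omega$, so that $r(\{e\}) = 0$. The empty set is a subtransversal of $\Omega$ with $\emptyset \cap \omega = \emptyset$, so Lemma~\ref{lem:rankssingular} applies with $S = \emptyset$. Its second conclusion gives $r(\{u\}) = r(\emptyset) + 1 = 1$ for every $u \in \omega - \{e\}$, where I use that $r(\emptyset) = 0$ as noted in the consequences of axiom~\ref{axiom1}. Hence no element of $\omega$ other than $e$ has rank zero, which is exactly the claim.

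As a sanity check one can avoid invoking the lemma and argue directly from the axioms: if $e, e' \in \omega$ were two distinct singular elements, then $\{e, e'\}$ is a skew pair from a skew class disjoint from $S := \emptyset$, so axiom~\ref{axiom2} yields
\[ r(\{e\}) + r(\{e'\}) - 2r(\emptyset) \geq 1, \]
whereas the left-hand side equals $0 + 0 - 0 = 0$, a contradiction. Either route is essentially a one-line deduction, and there is no real obstacle to overcome; the only thing to be careful about is confirming that the hypotheses of Lemma~\ref{lem:rankssingular} (namely, that $S$ be a subtransversal of $\Omega$ disjoint from $\omega$) are met by $S = \emptyset$, which is trivial.
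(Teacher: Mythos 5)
Your proof is correct and matches the paper's intent: the paper simply states that this proposition follows immediately from Lemma~\ref{lem:rankssingular}, which is exactly your first argument (applied with $S=\emptyset$). The alternative direct deduction from axiom~\ref{axiom2} is also fine, but no different in substance.
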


\begin{proposition}[{\cite[Proposition~5.5]{MM2zbMATH01119073}}]
\label{prop:singminorsanddelclass}
If $\omega$ is a singular skew class of a multimatroid $Z$, then for all $x$ in $\omega$, we have $Z|x = Z\ba \omega$.
\end{proposition}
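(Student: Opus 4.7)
The plan is to unpack both sides of the claimed equality and show they have identical carriers and identical rank functions, with Lemma~\ref{lem:rankssingular} doing essentially all of the work.

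First I would verify that the underlying carriers of $Z|x$ and $Z\ba\omega$ coincide. By definition, $Z|x$ lives on the skew classes $\{\omega'\in\Omega:x\notin\omega'\}$, and since $x\in\omega$ this is exactly $\Omega-\{\omega\}$ with element set $U-\omega$, matching the carrier of $Z\ba\omega$ on the nose.

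Next I would check that the two rank functions agree on every subtransversal $S$ of $\Omega-\{\omega\}$. By definition $r_{Z\ba\omega}(S)=r(S)$ and $r_{Z|x}(S)=r(S\cup\{x\})-r(\{x\})$, so the task is to show $r(S\cup\{x\})-r(\{x\})=r(S)$. I would split into the two possible types of $x$. Let $e$ be the unique singular element of $\omega$ provided by the preceding proposition. If $x=e$, Lemma~\ref{lem:rankssingular} (with $S=\emptyset$) gives $r(\{e\})=0$, and the same lemma applied to the general $S$ gives $r(S\cup\{e\})=r(S)$, so the desired equality holds. If $x\in\omega-\{e\}$, Lemma~\ref{lem:rankssingular} (with $S=\emptyset$) gives $r(\{x\})=1$, and applied to a general $S$ disjoint from $\omega$ it gives $r(S\cup\{x\})=r(S)+1$; subtracting produces $r(S)$, as required.

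There is no genuine obstacle here: once the singular-element lemma is in hand, the verification is essentially bookkeeping. The only point that needs a moment's care is to observe that the formula $r_{Z|x}(S)=r(S\cup\{x\})-r(\{x\})$ is well-defined for every $S\in\mathcal S(\Omega-\{\omega\})$ because $S\cup\{x\}$ is automatically a subtransversal of $\Omega$ (as $S$ meets no element of $\omega$), so Lemma~\ref{lem:rankssingular} applies directly.
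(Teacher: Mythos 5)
Your proof is correct and matches the paper's approach: the paper states this result "follows immediately" from Lemma~\ref{lem:rankssingular}, and your argument simply spells out that deduction by comparing carriers and rank functions in the two cases $x=e$ and $x\in\omega-\{e\}$. Nothing further is needed.
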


Bouchet established several cryptomorphisms for multimatroids and in particular proved the following.
\begin{proposition}
[{\cite[Proposition~5.4]{MM1zbMATH01116184}}]
\label{prop:uniqueskewpair}
Let $Z$ be a multimatroid and $C_1$ and $C_2$ be circuits of $Z$. Then $C_1 \cup C_2$ cannot contain  exactly one skew pair.
\end{proposition}

From this we get an easy proof of the next lemma describing the circuits of a multimatroid with a singular skew class.

\begin{lemma}\label{lem:circuits}
Let $Z=(U,\Omega,r)$ be a multimatroid with a singular element $e$. Suppose that $\omega$ is the skew class containing $e$. Then $\mathcal C(Z) = \mathcal C(Z\ba \omega) \cup \{\{e\}\}$.
\end{lemma}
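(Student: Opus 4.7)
The plan is to prove the two inclusions separately, after first observing that $\{e\}$ is indeed a circuit of $Z$. Since $e$ is singular, $r(\{e\}) = 0$, so $\{e\}$ has nullity $1$ and is dependent, while $\emptyset$ is independent; hence $\{e\}$ is a circuit. Moreover, since restriction preserves nullity on subtransversals disjoint from $\omega$, any circuit of $Z \ba \omega$ is a dependent subtransversal of $Z$ whose proper subsets (also subtransversals of $\Omega - \{\omega\}$) are independent in $Z$, so it is a circuit of $Z$. This takes care of the inclusion $\mathcal C(Z\ba\omega) \cup \{\{e\}\} \subseteq \mathcal C(Z)$.

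For the reverse inclusion, I would take a circuit $C$ of $Z$ with $C \neq \{e\}$ and show $C \cap \omega = \emptyset$, so that $C$ becomes a circuit of $Z \ba \omega$ by the nullity-preservation observation above. Suppose for contradiction that $C$ meets $\omega$ in some element $u$. There are two cases. If $u = e$, then $\{e\} \subsetneq C$ and $\{e\}$ is already dependent, contradicting minimality of $C$. If $u \in \omega - \{e\}$, I would apply Lemma \ref{lem:rankssingular} to the subtransversal $S := C - \{u\}$ (which is disjoint from $\omega$) to conclude $r(C) = r(S) + 1$, so
\[ n(C - \{u\}) = (|C|-1) - (r(C)-1) = |C| - r(C) = n(C) \geq 1, \]
meaning $C - \{u\}$ is dependent, again contradicting minimality of $C$.

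The main (and really only) obstacle is the second sub-case, where a circuit might contain a non-singular element of $\omega$; but Lemma \ref{lem:rankssingular} is tailored exactly to this situation and dispatches it immediately by showing that any element of $\omega - \{e\}$ contributes a full unit to the rank, which means deleting it from any subtransversal leaves the nullity unchanged.
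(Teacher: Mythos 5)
Your proposal is correct and follows essentially the same route as the paper: circuits avoiding $\omega$ are matched up via the preservation of nullity under restriction, and Lemma~\ref{lem:rankssingular} rules out a circuit containing an element of $\omega-\{e\}$, with minimality of circuits handling the case $e\in C$, $C\neq\{e\}$. Your write-up is just slightly more explicit than the paper's (verifying that $\{e\}$ is a circuit and spelling out the nullity computation), but the underlying argument is the same.
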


\begin{proof}
First, observe that as circuits are minimal dependent sets, the only circuit of $Z$ containing $e$ is $\{e\}$. Together with Proposition~\ref{prop:uniqueskewpair} this implies that the only circuit of $Z$ meeting $\omega$ is $\{e\}$.
Second, recall that for every transversal $T$ of $\Omega -\{\omega\}$, we have $r_Z(T)=r_{Z\ba \omega}(T)$. Thus $C$ is a circuit of $Z\ba \omega$ if and only if it is a circuit of $Z$ with $C\cap \omega=\emptyset$. The result follows by combining these two observations.
\end{proof}

We record the following  lemma for reference later.
\begin{lemma}\label{lem:usefulnull}
Let $Z$ be a multimatroid with a non-singular skew class $\omega$. Let $S$ be a subtransversal of $Z$ with $S\cap \omega =\emptyset$ and let $u$ be in $\omega$. Then $n_Z(S\cup \{u\})=n_{Z|u}(S)$.
\end{lemma}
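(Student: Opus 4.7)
The proof is essentially a direct unfolding of the definitions, combined with the observation that non-singularity forces $r_Z(\{u\}) = 1$. The plan is as follows.

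First I would note that since $\omega$ is non-singular, no element of $\omega$ is singular, so $r_Z(\{u\}) \geq 1$. On the other hand, by the consequence of axiom \ref{axiom1} recorded just after the axioms (applied with $A = \emptyset$ and using $r_Z(\emptyset) = 0$), we have $r_Z(\{u\}) \leq 1$. Hence $r_Z(\{u\}) = 1$.

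Next, by definition of the elementary minor $Z|u$, the rank function satisfies $r_{Z|u}(S) = r_Z(S \cup \{u\}) - r_Z(\{u\}) = r_Z(S \cup \{u\}) - 1$. Therefore
\[
n_{Z|u}(S) = |S| - r_{Z|u}(S) = |S| - r_Z(S\cup\{u\}) + 1.
\]
Since $u \notin S$ (because $S \cap \omega = \emptyset$ and $u \in \omega$), we have $|S \cup \{u\}| = |S| + 1$, so the right-hand side equals $|S\cup\{u\}| - r_Z(S\cup\{u\}) = n_Z(S\cup\{u\})$, as required.

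There is no real obstacle here; the only subtlety is recognising that the hypothesis that $\omega$ is non-singular is used precisely to pin down $r_Z(\{u\}) = 1$, which is the single nontrivial ingredient that makes the two nullities agree.
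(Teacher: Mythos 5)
Your proof is correct and follows essentially the same route as the paper: identify that non-singularity of $\omega$ forces $r_Z(\{u\})=1$, then unwind the definition of the rank function of the elementary minor $Z|u$ to compare the two nullities. Your extra justification that $r_Z(\{u\})\leq 1$ via the consequence of \ref{axiom1} is a harmless elaboration of the same argument.
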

\begin{proof}
As $\omega$ is non-singular and $u\in \omega$, $r_Z(\{u\})=1$. Hence
\begin{align*} n_{Z|u}(S) &= |S|-r_{Z|u}(S) = |S|-(r_Z(S\cup\{u\}) -r_Z(\{u\}))\\
&= |S\cup \{u\}| - r_Z(S\cup\{u\}) = n_Z(S\cup \{u\}),\end{align*}
as required.
\end{proof}

\begin{example}\label{examplenodot}
 Let $(U,\{ \omega_1,\omega_2,\omega_3\})$ 
 be the carrier with
 $\omega_1=\{d, e, f\}$,
 $\omega_2=\{g, h\}$ and
 $\omega_3=\{i, j\}$. An example of a multimatroid  $Z$ over this carrier has the following 8 bases:
\begin{center}
\begin{tabular}{cccc}
$\{ d, g, j\}$,&
$\{ d, h, i\}$,&
$\{ d, h, j\}$,&
$\{ e, g, i\}$,\\
$\{ e, g, j\}$,&
$\{ e, h, j\}$,&
$\{ f, g, i\}$,&
$\{ f, h, i\}$.
\end{tabular}
\end{center}

The remaining four transversals have rank two.
The circuits are the following three subtransversals:
$\{ d, g, i\}$,
$\{ e, h, i\}$ and
$\{ f, j\}$.
\expleend\end{example}

Notice that the elements within a skew class of a multimatroid are not ordered. Using distinct letters for each element of a multimatroid can make it difficult for the reader to quickly appreciate the structure of an example. To overcome this, in the following example, we denote the elements of a skew class by decorating the same letter in different ways. This has the disadvantage of suggesting that elements of different skew classes with the same decoration are somehow related. We stress that this is not the case in general, but in Section~\ref{sec:newtopo} we describe the $3$-matroid of a ribbon graph where such a relation does arise as the elements may be naturally partitioned into three transversals. 

\begin{example}\label{example}
 Let $(U,\{ \omega_a,\omega_b,\omega_c\})$
 be the 3-carrier with
 $\omega_a=\{\dott a, \overline{a}, \widehat{a}\}$,
 $\omega_b=\{\dott b, \overline{b}, \widehat{b}\}$ and
 $\omega_c=\{\dott c, \overline{c}, \widehat{c}\}$. An example of a 3-matroid  $Z$ over this carrier has the following 16 bases:
\begin{center}
\begin{tabular}{cccc}
$\{ \dott {a}, \overline{b}, \overline{c}\}$,&
$\{ \widehat{a}, \dott {b}, \overline{c}\}$,&
$\{ \dott {a}, \widehat{b}, \dott {c}\}$,&
$\{ \widehat{a}, \widehat{b}, \dott {c}\}$,\\
$\{ \overline{a}, \dott {b}, \overline{c}\}$,&
$\{ \widehat{a}, \dott {b}, \dott {c}\}$,&
$\{ \dott {a}, \dott {b}, \widehat{c}\}$,&
$\{ \widehat{a}, \widehat{b}, \widehat{c}\}$,\\
$\{ \dott {a}, \dott {b}, \dott {c}\}$,&
$\{ \dott {a}, \widehat{b}, \overline{c}\}$,&
$\{ \dott {a}, \overline{b}, \widehat{c}\}$,&
$\{ \widehat{a}, \overline{b}, \widehat{c}\}$,\\
$\{ \widehat{a}, \overline{b}, \overline{c}\}$,&
$\{ \overline{a}, \widehat{b}, \overline{c}\}$,&
$\{ \overline{a}, \dott {b}, \widehat{c}\}$,&
$\{ \overline{a}, \widehat{b}, \widehat{c}\}$.
\end{tabular}
\end{center}

Of the remaining eleven transversals, all of which are dependent,
$\{ \overline{a}, \overline{b}, \dott {c}\}$
 has rank one and the rest have rank two.
The circuits are the following seven subtransversals:
$\{ \overline{a}, \overline{b}\}$,
$\{ \overline{a}, \dott {c}\}$,
$\{ \overline{b}, \dott {c}\}$,
$\{ \dott {a}, \dott {b}, \overline{c}\}$,
$\{ \widehat{a}, \widehat{b}, \overline{c}\}$,
$\{ \widehat{a}, \dott {b}, \widehat{c}\}$ and
$\{ \dott {a}, \widehat{b}, \widehat{c}\}$.
We will use $Z$ as a running example through much of the paper.
\expleend\end{example}

 We have noted that every matroid is a $1$-matroid, but there is also a simple construction which associates a $2$-matroid with every matroid. (Although not every 2-matroid arises in this way.) This will be more useful as non-degeneracy will be a requirement of many of our results.
\begin{example}\label{eg:matroid}
    Let $M$ be a matroid with element set $E$ and rank function $r_M$. We define a $2$-matroid $Z(M):=(U,\Omega,r_{Z(M)})$ as follows. We have $U:=\{\dott e,\overline e:e\in E\}$ and $\Omega:=\{\omega_e:e\in E\}$, where for each $e$ we have $\omega_e:=\{\dott e,\overline e\}$.
    For a subset $A$ of $E$, we define $\dott A:=\{\dott e:e\in A\}$ and $\overline A:=\{\overline e:e\in E\}$. 
    Then for disjoint subsets $A$ and $B$ of $E$, we define $r_{Z(M)}(\dott A\cup \overline B)= r_M(A) + r_{M^*}(B)$, where $M^*$ is the dual matroid of $M$. It is not difficult to show that that $r_{Z(M)}$ satisfies \ref{axiom1}. To see that it satisfies \ref{axiom2}, let $S$ be a subtransversal of $\Omega$ missing the skew class $\omega_e$. Then $S=\dott A \cup \overline B$ for disjoint subsets of $A$ and $B$ of $E$. Proposition~2.1.11 of \cite{MR1207587} states that a circuit $C$ and a cocircuit $C^*$ of a matroid satisfy $|C\cap C^*|\ne 1$. Thus, in $M$, either $A\cup\{e\}$ does not have a circuit containing $e$ or $B\cup\{e\}$ does not have a cocircuit containing $e$. Hence either $r_M(A\cup\{e\})=r_M(A)+1$ or $r_{M*}(B\cup \{e\})=r_{M^*}(B)+1$, giving either  
$r_{Z(M)}(\dott A\cup \overline B\cup\{\dott e\})
= r_{Z(M)}(\dott A\cup \overline B) +1$ or
$r_{Z(M)}(\dott A\cup \overline B\cup\{\overline e\})
= r_{Z(M)}(\dott A\cup \overline B) +1$.

We let $\phi:2^E\rightarrow \mathcal T(\Omega)$ be given by $\phi(A)=\dott A \cup \overline {E-A}$. Then clearly $\phi$ is a bijection. We shall see several examples of classical matroid constructions or results which may be translated from a matroid $M$ to the $2$-matroid $Z(M)$ via $\phi$. For example,
\[ \mathcal {B}(Z(M)) = \{ \phi(B) : B\in \mathcal B(M)\},\] which implies that $\phi|_{\mathcal B(M)}$ is a bijection from $\mathcal B(M)$ to $\mathcal B(Z(M))$.

Next we claim that $C$ is a circuit of $Z(M)$ if and only if $C=\dott{C'}$ for a circuit $C'$ of $M$ or $C=\overline{C'}$ for a cocircuit $C'$ of $M$. It is not difficult to see that this claim is true when $C\subseteq \dott E$ or $C\subseteq \overline E$, so it remains to show that $Z(M)$ has no circuit meeting both $\dott E$ and $\overline E$. Suppose, for contradiction, that $C$ is a circuit of $Z(M)$ meeting both $\dott{E}$ and $\overline E$. Thus there are non-empty, disjoint subsets $A$ and $B$ of $E$ with $C=\dott A\cup \overline B$. 
    Then, as circuits are minimal dependent sets, both $\dott A$ and $\overline B$ are independent in $Z(M)$. So
    \[ r_{Z(M)}(C) = r_M(A) + r_{M^*}(B)= r_{Z(M)}(\dott A) + r_{Z(M)}(\overline B) = |A|+|B|=|C|,\]
    contradicting the fact that $C$ is a circuit.
In Section~\ref{sec:dm} we will see that this construction is a special case of the construction that associates a $2$-matroid with a delta-matroid. We will return to this example as a second running example for much of the paper.
\expleend\end{example}

\section{The transition polynomial}\label{sec:results}
This section contains our first two main results, Theorems~\ref{thm:main1} and~\ref{thm:maingen2}. We begin, in Subsection~\ref{ss:results1}, by defining the weighted transition polynomial and describing some of its basic properties. We then establish an expression for the weighted transition polynomial as a sum over bases, generalising Theorem~\ref{thm:tutteact} from the introduction. Our second main result, appearing in Subsection~\ref{ss:results2}, provides a combinatorial interpretation for a special case, extending Theorem~\ref{thm:partition} from the introduction.

\subsection{An activities expansion}\label{ss:results1}
The \emph{weighted transition polynomial} of a multimatroid $Z=(U,\Omega,r)$, introduced in~\cite{MR3191496}, is
\[ Q(Z;\vect x, t):= \sum_{T\in \mathcal T(\Omega)} t^{n(T)}\vect x_T ,\]
where $\vect x$ is a vector indexed by $U$ with an entry $\vect x_u$ for each $u$ in $U$ and $\vect x_T=\prod_{u\in T} \vect x_u$.
Notice that if $U=\emptyset$, then $\mathcal T(\Omega)=\{\emptyset\}$, so $Q(Z;\vect x,t)=1$.
The \emph{(unweighted) transition polynomial} of $Z$, which we denote by $Q(Z;t)$, is obtained from $Q(Z;\vect x,t)$ by setting $\vect x_u=1$ for all $u$.
 An example of the unweighted transition polynomial can be found at the end of this section.

It follows from Proposition~\ref{prop:basestrans} that whenever $Z$ is non-degenerate, $Q(Z;0)$ is equal to the number of bases of $Z$. 

The next result is a minor rephrasing of Theorem~6 from~\cite{MR3191496}. 
Its equivalence to Theorem~6 from~\cite{MR3191496}
follows from Proposition~\ref{prop:singminorsanddelclass}.
\begin{proposition}[{\cite[Theorem 6]{MR3191496}}]\label{prop:delcon}
Let $Z=(U,\Omega,r)$ be a multimatroid and let $\omega$ be a skew class of $\Omega$. If $\omega$ contains a singular element $e$, then
\[
Q(Z;\vect x,t) = \Big(t\vect x_e+\sum_{u\in\omega-\{e\}} \vect x_u\Big) Q(Z\ba \omega;\vect x',t).\]
If $\omega$ is non-singular, then
\[
Q(Z;\vect x,t) =
\sum_{u\in \omega}\vect x_u Q(Z|u;\vect x', t).
\]
In either case, $\vect x'$ is obtained from $\vect x$ by removing the entries indexed by the elements of $\omega$.
\end{proposition}

\begin{comment}
\begin{proof}
Suppose first that $\omega$ contains a singular element $e$.
By Lemma~\ref{lem:rankssingular}, if $S$ is a transversal of $\Omega - \{\omega\}$,
then $n_Z(S\cup \{e\})=n_Z(S)+1$ and for all $u$ in $\omega-\{e\}$, $n_Z(S\cup \{u\})=n_Z(S)$. Thus,
\begin{align*} Q(Z;\vect x, t) &= \sum_{T\in \mathcal T(\Omega)}  t^{n(T)}\vect x_T  \\&= \sum_{S\in \mathcal T(\Omega - \{\omega)\}} \sum_{u\in \omega}   t^{n(S\cup \{u\})} \vect x_S \vect x_u
 =\sum_{S\in \mathcal T(\Omega - \{\omega\})} \Big(t\vect x_e+\sum_{u\in\omega-\{e\}} \vect x_u\Big)t^{n(S)}\vect x_S \\ &= \Big(t\vect x_e+\sum_{u\in\omega-\{e\}} \vect x_u\Big) Q(Z\ba \omega;\vect x,t).\end{align*}

Now suppose that $\omega$ is non-singular.
Let $T$ be a transversal of $Z$, and $u:=T_{\omega}$ (that is, the unique element of $T\cap \omega$). Then $r_{Z|u}(T-\{u\}) = r_Z(T)-r_Z(\{u\})=r_Z(T)-1$. Thus,
\begin{align*} n_Z(T)&=|T|-r_Z(T) = |T|-r_{Z|u}(T-\{u\})-1 \\&= |T-\{u\}| - r_{Z|u}(T-\{u\}) = n_{Z|u}(T-\{u\}).\end{align*}
Hence,
\begin{align*}
Q(Z;\vect x,t)&= \sum_{T\in \mathcal T(\Omega)} t^{n_Z(T)}\vect x_T
= \sum_{u\in \omega} \sum_{T' \in \mathcal T(Z|u)} t^{n_Z(T'\cup \{u\})} \vect x_{T'}\vect x_u
\\&= \sum_{u\in \omega} \sum_{T' \in \mathcal T(Z|u)}t^{n_{Z|u}(T')} \vect x_u\vect x_{T'}
= \sum_{u\in \omega} \vect x_u Q(Z|u;\vect x',t).\end{align*}
\end{proof}
\end{comment}

Let $Z$ be a multimatroid. Fix an arbitrary total ordering $\prec$ of the skew classes of $Z$. The ordering $\prec$ also induces total orderings of
the skew classes of elementary minors of $Z$, of $Z\ba \omega$ for any $\omega$, and of any subtransversal of $\Omega$,
which are by a slight abuse of notation also
denoted by $\prec$.

Given a subtransversal $S$, we let $\min(S)$ denote the least element of $S$ according to $\prec$. We say that a skew class $\omega$ of $Z$ is \emph{active} with respect to $S$ if there is a circuit $C$ of $Z$ with $\min (C)\in \omega$ and $C-\omega \subseteq S$. Notice that $C$ may or may not be a subset of $S$. 
If $\omega$ is not active with respect to $S$, we say it is \emph{inactive} with respect to $S$.
We let $\act_{Z,\prec}(S)$ and $\inact_{Z,\prec}(S)$ denote, respectively, the sets of active and inactive skew classes with respect to $S$. As usual, we omit $Z$ when the context is clear.

The definitions of active  and inactive are normally only applied to bases, and we will not apply them more widely until Section~\ref{s:kochol}. When $S$ is a basis and $\omega$ is active with respect to $S$, the circuit $C$ in the definition of active is the fundamental circuit $C(S,\omega)$.
 An example illustrating these definitions can be found in Example~\ref{eg:examplectd.1} appearing at the end of this subsection.

Given a basis $B$ and a skew class $\omega$ such that $C(B,\omega)$ exists, we let $\underline B_{\omega}$ denote the unique element of $C(B,\omega)-B$.

\begin{lemma}\label{lem:activitiessingular}
Let $Z$ be a multimatroid with a singular element $e$. Fix an arbitrary total ordering $\prec$ of the skew classes of $Z$.
Let $\gamma$ be the skew class of $Z$ containing $e$ and suppose that $\{e,u\}$ is a skew pair.
Then $B$ is a basis of $Z\ba \gamma$ if and only if $B\cup \{u\}$ is a basis of $Z$. Moreover, if $B$ is a basis of $Z\ba \gamma$, then each of the following holds. 
\begin{enumerate}
    \item\label{item:l321} $\act_{Z,\prec}(B\cup \{u\})=\act_{Z\ba \gamma,\prec}(B) \cup\{\gamma\}$.
    \item\label{item:l322} $\inact_{Z,\prec}(B\cup \{u\})=\inact_{Z\ba \gamma,\prec}(B)$.
\item\label{item:l323} For every skew class $\omega$ of $Z\ba \gamma$, such that $C(B,\omega)$ exists, we have 
    $\underline{B\cup\{u\}}_{\omega}=\underline B_{\omega}$.
\end{enumerate}
\end{lemma}

\begin{proof}
The fact that $B$ is a basis of $Z\ba \gamma$ if and only if $B\cup \{u\}$ is a basis of $Z$ follows from Lemma~\ref{lem:rankssingular}. Items~\ref{item:l321}--\ref{item:l323}. follow from Lemma~\ref{lem:circuits} and the observation that $\gamma$ is active with respect to $B\cup\{u\}$ because it is singular.
\end{proof}

\begin{lemma}\label{lem:activitiesnonsingularprelim}
Let $Z$ be a multimatroid. 
Suppose that $u$ is a non-singular element of $Z$ and $\omega$ is a skew class of $Z|u$.
Then $B$ is a basis of $Z|u$ if and only if $B\cup \{u\}$ is a basis of $Z$. Moreover if $B$ is a basis of $Z|u$, 
then $C_Z(B\cup \{u\},\omega)$ exists if and only if 
$C_{Z|u}(B,\omega)$ exists, and when these fundamental circuits exist we have $C_{Z|u}(B,\omega) = C_Z(B\cup \{u\},\omega)-\{u\}$. 
\end{lemma}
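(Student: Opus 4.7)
The plan is to reduce everything to the nullity identity $n_{Z|u}(S) = n_Z(S \cup \{u\})$ from Lemma~\ref{lem:usefulnull}, valid for any subtransversal $S$ of $Z|u$ since $u$ is non-singular. Observe that the skew classes of $Z|u$ are precisely the skew classes of $Z$ other than the skew class $\omega_u$ containing $u$, so $B$ is a subtransversal of $Z|u$ if and only if $B \cup \{u\}$ is a subtransversal of $Z$; moreover, the nullity identity shows the former is independent if and only if the latter is.

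For the first claim I would verify that maximality transfers in the same manner. A witness to non-maximality of $B$ as an independent subtransversal of $Z|u$ is an element $x$ of a skew class $\omega \neq \omega_u$ disjoint from $B$ such that $B \cup \{x\}$ is independent in $Z|u$. The nullity identity makes this equivalent to $B \cup \{u, x\}$ being independent in $Z$, which is exactly the witness to non-maximality of $B \cup \{u\}$ as an independent subtransversal of $Z$, since every skew class of $Z$ not meeting $B \cup \{u\}$ is a skew class of $Z|u$ not meeting $B$.

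For the second claim, the existence equivalence follows from the same translation: $B \cup \omega$ contains a dependent subtransversal in $Z|u$ if and only if $(B \cup \{u\}) \cup \omega$ contains one in $Z$. To identify the circuits, write $C := C_Z(B \cup \{u\}, \omega)$. First, $C - \{u\} \subseteq B \cup \omega$ is immediate since $u \notin B \cup \omega$. Second, $C - \{u\}$ is dependent in $Z|u$: by the nullity identity and the easy fact that $n_Z$ is monotone under the addition of a single element, $n_{Z|u}(C - \{u\}) = n_Z(C \cup \{u\}) \geq n_Z(C) \geq 1$. Third, to get minimality, I would let $C_0$ be the circuit of $Z|u$ contained in $C - \{u\}$ and observe, via the nullity identity once more, that $C_0 \cup \{u\}$ is dependent in $Z$ and sits inside $(B \cup \{u\}) \cup \omega$; Proposition~\ref{prop:onlyonecircuit} then forces $C \subseteq C_0 \cup \{u\}$, so $C - \{u\} \subseteq C_0$, and therefore $C_0 = C - \{u\} = C_{Z|u}(B, \omega)$.

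The main obstacle is this last step: showing that $C - \{u\}$ is not merely dependent in $Z|u$ but minimal so. A direct minimality argument is awkward because deleting an element of $C - \{u\}$ and then re-attaching $u$ need not land inside any set one can exploit. The cleaner route is the sandwiching above, which instead leverages the \emph{uniqueness} of the fundamental circuit of $Z$ given by Proposition~\ref{prop:onlyonecircuit}. This argument handles the cases $u \in C$ and $u \notin C$ uniformly, so no case split is needed in the write-up.
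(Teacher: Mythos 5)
Your proof is correct, and it follows the same basic translation principle as the paper (independence and nullity transfer between $Z$ and $Z|u$ because $r_Z(\{u\})=1$), but it handles the crux step differently. The paper disposes of the first assertion and of the existence equivalence directly from the definition of $r_{Z|u}$, and then proves that $C-\{u\}$ is literally a circuit of $Z|u$ by computing ranks of all proper subsets, split into the two cases $u\in C$ and $u\notin C$; only after that does it invoke Proposition~\ref{prop:onlyonecircuit}, once, in $Z|u$. You instead show only that $C-\{u\}$ is dependent in $Z|u$ and then pin down minimality by a sandwich: the circuit $C_0\subseteq C-\{u\}$ of $Z|u$ lifts to the dependent set $C_0\cup\{u\}\subseteq (B\cup\{u\})\cup\omega$, and uniqueness of the fundamental circuit of $Z$ forces $C\subseteq C_0\cup\{u\}$, hence $C_0=C-\{u\}$. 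This uses Proposition~\ref{prop:onlyonecircuit} twice (in $Z$ and in $Z|u$) but buys a uniform argument with no case split and no subset-by-subset rank verification; both routes are sound and of comparable length. One small citation point: Lemma~\ref{lem:usefulnull} is stated under the hypothesis that the whole skew class containing $u$ is non-singular, whereas here only the element $u$ is assumed non-singular (its skew class could still contain a singular element). This is harmless, because the proof of that lemma uses nothing beyond $r_Z(\{u\})=1$, which is equivalent to $u$ being non-singular, so the identity $n_{Z|u}(S)=n_Z(S\cup\{u\})$ holds in your setting; but you should either note this explicitly or, as the paper does, appeal directly to the definition $r_{Z|u}(S)=r_Z(S\cup\{u\})-r_Z(\{u\})$. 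You should also record in passing that $C\cup\{u\}$ and $C_0\cup\{u\}$ are subtransversals (clear, since $(B\cup\{u\})\cup\omega$ meets the skew class of $u$ only in $u$), as the nullity identity is only stated for subtransversals.
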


\begin{proof}
The first assertion follows immediately from the definition of the rank function of $Z|u$.
Suppose then that $B$ is a basis of $Z|u$.
Then for every $f$ in $\omega-B$, by the definition of the rank function of $Z|u$, $(B-\omega)\cup\{f\}$ is independent in $Z|u$ if and only if $(B-\omega)\cup\{f,u\}$ is independent in $Z$. Thus $C_Z(B\cup \{u\},\omega)$ exists if and only if 
$C_{Z|u}(B,\omega)$ exists. Suppose then that 
$C_Z(B\cup \{u\},\omega)$ and $C_{Z|u}(B,\omega)$ exist. 
Let $C:=C_Z(B\cup \{u\},\omega)$.
We next show that $C-\{u\}$ is a circuit of $Z|u$. 
 Suppose first that $u\notin C$. As $C$ is the unique circuit in $B\cup\{u\}\cup \omega$, for any subset $C'$ of $C$, we have $r_Z(C'\cup \{u\})=r_Z(C')+1$. Thus $r_{Z|u}(C')=r_Z(C')$, so $C-\{u\}=C$ is a circuit of $Z|u$. 
Now suppose that $u\in C$. Then $r_{Z|u}(C-\{u\})=r_{Z}(C)-1=|C-\{u\}|-1$,
and if $C'$ is a proper subset of $C-\{u\}$, then $r_{Z|u}(C')=r_Z(C'\cup\{u\})-1=|C'|$. Thus $C-\{u\}$ is a circuit in $Z|u$. But $C-\{u\} \subseteq B \cup \omega$, and $B\cup \omega$ contains a unique circuit of $Z|u$, namely $C_{Z|u}(B,\omega)$. Hence $C_{Z|u}(B,\omega)=C-\{u\}$, as required.
\end{proof}

\begin{lemma}\label{lem:activitiesnonsingular}
Let $Z$ be a multimatroid. Fix an arbitrary total ordering $\prec$ on the skew classes of $Z$. Suppose that $\gamma$ is the greatest skew class according to $\prec$, that $\gamma$ is non-singular and $u\in \gamma$. Then $B$ is a basis of $Z|u$ if and only if $B\cup \{u\}$ is a basis of $Z$. Moreover if $B$ is a basis of $Z|u$, then each of the following holds.
\begin{enumerate}
    \item $\act_{Z,\prec}(B\cup \{u\})=\act_{Z|u,\prec}(B)$.
    \item $\inact_{Z,\prec}(B\cup \{u\})=\inact_{Z|u,\prec}(B)\cup \{\gamma\}$.
    \item For every skew class $\omega$ of $Z|u$, such that $C(B,\omega)$ exists, we have 
    $\underline{B\cup\{u\}}_{\omega}=\underline B_{\omega}$.
\end{enumerate}
\end{lemma}

\begin{proof}
After noting that $\gamma$ is inactive with respect to every basis, because it is non-singular and the greatest skew class with respect to $\prec$, each assertion in the lemma follows immediately from the previous lemma.
\end{proof}

Our first main result is the following analogue of Theorem~\ref{thm:tutteact}. The assumption that $Z$ is non-degenerate is required both to ensure that $|\omega|-1\ne 0$ and that the element $B_{\omega}$ is always defined. If, for example, $\omega=\{e\}$ and $r(\{e\})=0$, then no basis meets $\omega$.

\begin{theorem}\label{thm:main1}
Let $Z$ be a non-degenerate multimatroid and let $\prec$ be a total ordering of its skew classes. Then
\[ Q(Z;\vect x,t)= \sum_{B\in \mathcal B(Z)} \Bigg(\prod_{\omega \in \inact_{\prec}(B)} \vect x_{B_{\omega}}\Bigg) \Bigg(\prod_{\omega \in \act_{\prec}(B)}
 \Big(\frac {t\vect x_{\underline B_{\omega}}} {|\omega|-1}+\vect  x_{B_{\omega}}\Big)\Bigg).\]
\end{theorem}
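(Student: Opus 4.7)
The plan is to argue by induction on $|\Omega|$, the number of skew classes of $Z$. The base case $|\Omega|=0$ is immediate: $Q(Z;\vect x,t)=1$ since the unique transversal is $\emptyset$ with nullity $0$, while the right-hand side also equals $1$ since the unique basis is $\emptyset$ and both products are empty. For the inductive step, let $\gamma$ denote the greatest skew class with respect to $\prec$, and apply the appropriate recursion from Proposition~\ref{prop:delcon}.

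If $\gamma$ is singular with singular element $e$, then Proposition~\ref{prop:delcon} gives
\[ Q(Z;\vect x,t) = \Big(t\vect x_e+\sum_{u\in\gamma-\{e\}}\vect x_u\Big)\, Q(Z\ba\gamma;\vect x',t). \]
I would apply the inductive hypothesis to $Z\ba\gamma$ and invoke Lemma~\ref{lem:activitiessingular} to identify each basis $B$ of $Z$ with a pair $(B',u)$ where $B'\in\mathcal B(Z\ba\gamma)$ and $u\in\gamma-\{e\}$; the lemma also tells us that $\gamma$ is active with respect to $B$ with $B_\gamma=u$ and $\underline B_\gamma=e$, while the activities and fundamental circuits of the remaining skew classes are inherited from $Z\ba\gamma$. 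The goal then reduces to the elementary identity
\[ \sum_{u\in\gamma-\{e\}}\Big(\frac{t\vect x_e}{|\gamma|-1}+\vect x_u\Big)=t\vect x_e+\sum_{u\in\gamma-\{e\}}\vect x_u, \]
which is precisely what the denominator $|\omega|-1$ in the statement is designed to produce.

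If $\gamma$ is non-singular, Proposition~\ref{prop:delcon} gives
\[ Q(Z;\vect x,t)=\sum_{u\in\gamma}\vect x_u\, Q(Z|u;\vect x',t). \]
I would apply the inductive hypothesis to each $Z|u$ and use Lemma~\ref{lem:activitiesnonsingular} to identify each basis $B$ of $Z$ with a pair $(u,B')$ where $u\in\gamma$ and $B'\in\mathcal B(Z|u)$, noting that $\gamma$ is inactive with respect to $B$ (because it is non-singular and greatest), $B_\gamma=u$, and all remaining activities match those of $B'$ in $Z|u$. The factor $\vect x_u$ is then exactly the contribution $\vect x_{B_\gamma}$ of the inactive class $\gamma$ in the target formula, and summing over $u\in\gamma$ delivers the claim.

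The main obstacle is the singular case: the recursion supplies an \emph{additive} prefactor over the $|\gamma|$ elements of $\gamma$, whereas the target formula presents a single \emph{multiplicative} bracket per skew class. The elementary identity displayed above dissolves this apparent mismatch, but only because of the specific coefficient $1/(|\omega|-1)$ in the statement; everything else is careful bookkeeping with the activity data supplied by Lemmas~\ref{lem:activitiessingular} and~\ref{lem:activitiesnonsingular}.
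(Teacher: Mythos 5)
Your proposal is correct and follows essentially the same route as the paper's proof: induction on $|\Omega|$ with the greatest skew class $\gamma$ split into the singular and non-singular cases, using Proposition~\ref{prop:delcon} together with Lemmas~\ref{lem:activitiessingular} and~\ref{lem:activitiesnonsingular} to match bases of $Z$ with pairs $(B',u)$, and resolving the singular case via the identity $\sum_{u\in\gamma-\{e\}}\bigl(\tfrac{t\vect x_e}{|\gamma|-1}+\vect x_u\bigr)=t\vect x_e+\sum_{u\in\gamma-\{e\}}\vect x_u$, exactly as the paper does.
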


\begin{proof}
Suppose that $Z=(U,\Omega,r)$. We proceed by induction on $|\Omega|$. If $|\Omega|=0$, then $\mathcal B(Z)=\{\emptyset\}$ and $\inact_{\prec}(\emptyset)=\act_{\prec}(\emptyset)=\emptyset$, so $Q(Z;\vect x,t)=1$, as required.
So we may suppose that $|\Omega|>0$. Let $\gamma$ be the greatest skew class according to $\prec$. Suppose first that $\gamma$ is singular and the element $e$ in $\gamma$ satisfies $r(\{e\})=0$.
By Lemmas~\ref{lem:rankssingular} and~\ref{lem:activitiessingular},

\begin{align*}\MoveEqLeft{ \sum_{B\in \mathcal B(Z)} \Bigg(\prod_{\omega \in \inact_{Z,\prec}(B)} \vect x_{B_{\omega}}\Bigg) \Bigg(\prod_{\omega \in \act_{Z,\prec}(B)}
 \Big(\frac {t\vect x_{\underline B_{\omega}}} {|\omega|-1}+\vect x_{B_{\omega}}\Big)\Bigg) }\\
 &= \sum_{B'\in \mathcal B(Z\ba \gamma)} \sum_{u\in \gamma-\{e\}} \Bigg(\prod_{\omega \in \inact_{Z\setminus\gamma,\prec}(B')}  \vect x_{B_{\omega}}\Bigg) \Bigg(\prod_{\omega \in \act_{Z\setminus\gamma,\prec}(B')}
 \Big(\frac {t\vect x_{\underline B'_{\omega}}} {|\omega|-1}+\vect x_{B'_{\omega}}\Big)\Bigg)\\
  &\phantom{=\sum_{B'\in \mathcal B(Z\ba \omega)} \sum_{u\in \gamma-\{e\}}} \cdot
 \Big(\frac {t\vect x_e} {|\gamma|-1}+\vect x_u\Big)\\
 &=\Big(t\vect x_e+\sum_{u\in\gamma-\{e\}} \vect x_u\Big)\\
 &\phantom{=}\cdot \sum_{B'\in \mathcal B(Z\setminus\gamma)}\Bigg(\prod_{\omega \in \inact_{Z\setminus\gamma,\prec}(B')} \vect x_{B_{\omega}}\Bigg) \Bigg(\prod_{\omega \in \act_{Z\setminus\gamma,\prec}(B')}\Big(\frac {t\vect x_{\underline B'_{\omega}}} {|\omega|-1}+\vect x_{B'_{\omega}}\Big)\Bigg).\end{align*}
By using first the induction hypothesis and then Proposition~\ref{prop:delcon}, we see that this is equal to
\[ \Big(t\vect x_e+\sum_{u\in\gamma-\{e\}} \vect x_u\Big) Q(Z\ba \gamma;\vect x',t) = Q(Z;\vect x,t),\]
where $\vect x'$ is the restriction of $\vect x$ to $Z\ba \gamma$.

Now suppose that $\gamma$ is non-singular. Then, by Lemma~\ref{lem:activitiesnonsingular},
\begin{align*} \MoveEqLeft{\sum_{B\in \mathcal B(Z)} \Bigg(\prod_{\omega \in \inact_{Z,\prec}(B)} \vect x_{B_{\omega}}\Bigg) \Bigg(\prod_{\omega \in \act_{Z,\prec}(B)}
 \Big(\frac {t\vect x_{\underline B_{\omega}}} {|\omega|-1}+\vect x_{B_{\omega}}\Big)\Bigg) }\\
 &= \sum_{u\in \gamma} \sum_{B'\in \mathcal B(Z|u)} \vect x_u
\Bigg(\prod_{\omega \in \inact_{Z|u,\prec}(B')}\vect  x_{B'_{\omega}}\Bigg) \Bigg(\prod_{\omega \in \act_{Z|u,\prec}(B')}
 \Big(\frac {t\vect x_{\underline B'_{\omega}}} {|\omega|-1}+\vect x_{B'_{\omega}}\Big)\Bigg).\end{align*}
By using first the induction hypothesis and then Proposition~\ref{prop:delcon}, we see that this is
\[ \sum_{u\in \gamma} \vect x_u Q(Z|u;\vect x',t) = Q(Z;\vect x,t),\] 
where $\vect x'$ is the restriction of $\vect x$ to $Z\ba \gamma$, as required.
\end{proof}

The application of Theorem~\ref{thm:main1} to the unweighted transition polynomial is immediate. 
\begin{corollary}\label{cor:firstimm}
Let $Z$ be a non-degenerate multimatroid and let $\prec$ be a total ordering of its skew classes. Then the unweighted transition polynomial is given by
\[ Q(Z;t)= \sum_{B\in \mathcal B(Z)}  \prod_{\omega \in \act_{\prec}(B)}
 \Big(\frac {t} {|\omega|-1}+1\Big).\]
\end{corollary}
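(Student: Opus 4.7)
The plan is to observe that this corollary is an immediate specialisation of Theorem~\ref{thm:main1}. By definition, the unweighted transition polynomial $Q(Z;t)$ is obtained from the weighted version $Q(Z;\vect x,t)$ by setting $\vect x_u = 1$ for every $u \in U$. So my first and only substantive step is to make this substitution in the right-hand side of the activities expansion given by Theorem~\ref{thm:main1}.

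After the substitution, each factor $\vect x_{B_\omega}$ in the inactive product becomes $1$, so the entire product indexed by $\inact_{\prec}(B)$ collapses to $1$. Likewise, in the active product, we have $\vect x_{B_\omega} = 1$ and $\vect x_{\underline B_\omega} = 1$, so each factor reduces to $\frac{t}{|\omega|-1} + 1$. Therefore
\[
Q(Z;t) = \sum_{B\in \mathcal B(Z)} \prod_{\omega \in \act_{\prec}(B)} \Big(\frac{t}{|\omega|-1} + 1\Big),
\]
as claimed. There is no genuine obstacle here since the substitution is mechanical and the structure of the formula is preserved; the only minor thing to note is that when $\inact_{\prec}(B)$ is empty the corresponding product is the empty product, equal to $1$, which is consistent.
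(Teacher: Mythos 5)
Your proposal is correct and matches the paper's approach: the paper states this corollary as an immediate consequence of Theorem~\ref{thm:main1}, obtained precisely by setting every $\vect x_u=1$ so that the inactive product collapses to $1$ and each active factor becomes $\frac{t}{|\omega|-1}+1$.
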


\begin{corollary}\label{cor:secondimm}
Let $Z$ be a $q$-matroid with $q\geq 2$ and let $\prec$ be a total ordering of its skew classes. Then the unweighted transition polynomial is given by
\begin{equation} Q(Z;t)= \sum_{B\in \mathcal B(Z)}   \Big(\frac {t} {q-1}+1\Big)^{|{\act_{\prec}(B)}|}.\label{eq:main1}\end{equation}
\end{corollary}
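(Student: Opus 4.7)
The plan is to derive Corollary~\ref{cor:secondimm} directly from Corollary~\ref{cor:firstimm} by exploiting the defining property of a $q$-matroid, namely that every skew class has cardinality exactly $q$.

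First I would invoke Corollary~\ref{cor:firstimm}, which gives
\[ Q(Z;t)= \sum_{B\in \mathcal B(Z)}  \prod_{\omega \in \act_{\prec}(B)}
 \Big(\frac {t} {|\omega|-1}+1\Big).\]
Note that Corollary~\ref{cor:firstimm} requires $Z$ to be non-degenerate; this hypothesis is ensured here because $q \geq 2$ forces every skew class to have cardinality at least two.

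Next I would observe that since $Z$ is a $q$-matroid, its underlying carrier is a $q$-carrier, so $|\omega| = q$ for every skew class $\omega \in \Omega$. In particular, the factor $\tfrac{t}{|\omega|-1}+1$ equals the constant $\tfrac{t}{q-1}+1$ for each $\omega \in \act_\prec(B)$. Pulling this constant out of the product yields
\[ \prod_{\omega \in \act_{\prec}(B)} \Big(\frac{t}{|\omega|-1}+1\Big) = \Big(\frac{t}{q-1}+1\Big)^{|\act_\prec(B)|}. \]
Substituting into the previous display gives~\eqref{eq:main1}.

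There is essentially no obstacle here: the corollary is just the specialization of Corollary~\ref{cor:firstimm} to the case when all skew classes share a common size. The only matter worth explicit mention is checking that the non-degeneracy hypothesis of Corollary~\ref{cor:firstimm} is implied by $q \geq 2$, which we did above.
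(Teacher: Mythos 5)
Your proposal is correct and is essentially the paper's argument: the corollary is obtained from Corollary~\ref{cor:firstimm} by substituting $|\omega|=q$ for every skew class, with the observation that $q\geq 2$ guarantees non-degeneracy. Nothing further is needed.
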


Substituting $t=q-1$ into~\eqref{eq:main1} and using the definition of $Q(Z;t)$ gives
\begin{equation} \label{eq:cor}
\sum_{T\in \mathcal T(Z)} (q-1)^{n(T)} = \sum_{B\in \mathcal B(Z)} 2^{|{\act_{\prec}(B)}|}.
\end{equation}

\begin{example}[Example~\ref{example} continued]\label{eg:examplectd.1} 
Recall that $Z$, the $3$-matroid described in Example~\ref{example}, has $16$ bases, $10$ transversals with nullity one, and one transversal with nullity two. So its unweighted transition polynomial is $Q(Z;t)=t^2+10t+16t$.

We define a total ordering $\prec$ on the skew classes of $Z$ by requiring that $\omega_a\prec \omega_b\prec \omega_c$.
Then $Z$ has the following four bases having two active skew classes,
with the
active skew classes being $\omega_a$ and $\omega_b$ in each case:
$\{\dott{a}, \dott{b}, \dott{c}\}$,
$\{ \widehat{a}, \dott{b}, \dott{c}\}$,
$\{ \dott{a}, \widehat{b}, \dott{c}\}$ and
$\{ \widehat{a}, \widehat{b}, \dott{c}\}$.
The remaining twelve bases have one active skew class which is $\omega_a$ in each case. 
Thus, by Equation~\eqref{eq:main1}, the unweighted transition polynomial of $Z$ is
\[
  Q(Z;t)= 4\Big(\frac {t} {2}+1\Big)^2+12\Big(\frac {t} {2}+1\Big)=t^2+10t+16.
\]
\expleend\end{example}

\begin{example}[Example~\ref{eg:matroid} continued]\label{eg:matcont}  
In this example we apply Theorem~\ref{thm:main1} to matroids. In particular we show that Theorem~\ref{thm:tutteact} is a special case of Theorem~\ref{thm:main1} and therefore our results generalise the classical matroid results. 
Suppose that $M$ is a matroid with a total ordering $\prec$ of its set $E$ of elements. Then we get a total ordering $\prec$ on the skew classes of $Z(M)$ so that $\omega_e \prec \omega_f$ if and only if $e\prec f$. 

Let $T$ be a transversal of $Z(M)$ and let $A=\phi^{-1}(T)$.
We have
\[ n_{Z(M)}(T) = |T|-r_{Z(M)}(T)
= |A|-r_M(A) + |E-A|-r_{M^*}(E-A).\]
Using the expression for the rank function of a dual matroid,~\cite[Proposition~2.1.9]{MR1207587}, we see that
\[ n_{Z(M)}(T) = |A|-2r_M(A) + r_M(E).\]
Set $x_{\dott e}=u$ and $x_{\overline e} =v$ for every element $e$. Then we have
\begin{equation} \label{eq:mat1} Q(Z(M);\vect x,t) 
= \sum_{T\in \mathcal{T} (\Omega)} t^{n_{Z(M)}(T)} \vect x_T
= \sum_{A\subseteq E} t^{|A|+r_M(E)-2r_M(A)} u^{|A|} v^{|E-A|}.\end{equation}

Now suppose that $T$ is a basis of $Z(M)$ and let $B=\phi^{-1}(T)$.
Recall that the collection of circuits of $Z(M)$ comprises subsets of $\dott E$ corresponding to circuits of $M$  
and subsets of $\overline E$ corresponding to cocircuits of $M$.
Thus a skew class $\omega_e$ 
with $T_{\omega_e}=\dott e$
is active with respect to $T$ if and only if $e$ is internally active with respect to $B$ in $M$, 
and a skew class $\omega_e$ 
with $T_{\omega_e}=\overline e$
is active with respect to $T$ if and only if $e$ is externally active with respect to $B$ in $M$. 
So a skew class $\omega_e$
is active with respect to a basis $T$ of $Z(M)$ if and only if $e$ is internally or
externally active with respect to the corresponding basis $B$ of $M$.
Then Theorem~\ref{thm:main1} applied to $Z(M)$ gives
\begin{align}\label{eq:mat2} 
\begin{split}
\MoveEqLeft Q(Z(M);\vect x,t)\\ &= 
\sum_{B\in \mathcal {B}(M)}
u^{|B|-|{\inta_{\prec}(B)}|}
v^{|E-B|-|{\exta_{\prec}(B)}|}
(tv+u)^{|{\inta_{\prec}(B)}|}
(v+tu)^{|{\exta_{\prec}(B)}|}\\
&=\sum_{B\in \mathcal {B}(M)}
u^{r_M(E)} 
v^{|E|-r_M(E)} 
(tv/u+1)^{|{\inta_{\prec}(B)}|}
(1+tu/v)^{|{\exta_{\prec}(B)}|}.
\end{split}
\end{align}
Setting $u=\sqrt y$, $v=\sqrt x$ and $t=\sqrt{xy}$, and equating the two expressions for $Q(Z(M))$ given in Equations~\eqref{eq:mat1} and~\eqref{eq:mat2}, gives for $x\geq 0$ and $y\geq 0$,
\begin{align*} 
&\sum_{B\in \mathcal{B}(M)} {\sqrt y}^{\,r_M(E)} {\sqrt x}^{\,|E|-r_M(E)} (x+1)^{|{\inta_{\prec} (B)}|} (y+1)^{|{\exta_{\prec} (B)}|} \\
 &= \sum_{A\subseteq E} {\sqrt y}^{\,r_M(E)} {\sqrt x}^{\,|E|-r_M(E)} x^{r_M(E)-r_M(A)} y^{|A|-r_M(A)}.
\end{align*}

But this is a polynomial identity, so it holds for all $x$ and $y$. 
 From this we deduce that 
 \[ T(M;x+1,y+1) = \sum_{B\in \mathcal{B}(M)}  (x+1)^{|{\inta_{\prec} (B)}|} (y+1)^{|{\exta_{\prec} (B)}|}.\]
So we see that Theorem~\ref{thm:tutteact} is a special case of Theorem~\ref{thm:main1}. 
The argument above also shows that 
\begin{equation}\label{eq:tuttetrans} T(M;x+1,y+1) =  \sqrt{x}^{\,r_M(E)-|E|} \sqrt{y}^{\,-r_M(E)} Q(Z(M);\vect x, t), \end{equation}
where $t=\sqrt{xy}$, $x_{\dott e}=\sqrt{y}$ and $x_{\overline e}=\sqrt{x}$. 
When $Q(Z(M);\vect x,t)$ is expanded in powers of $x_{\dott e}$, $x_{\overline e}$, and $t$, the sum of the degrees of $x_{\dott e}$ and $x_{\overline e}$ in each monomial is equal to $|E|$ and the degree of $x_{\dott e}$ in each monomial containing $t^0$ is equal to $r_M(E)$.
Thus the weighted transition polynomial generalizes the Tutte polynomial, in the sense that the Tutte polynomial may be obtained from $Q(Z(M))$ by an appropriate symbolic substitution.
\expleend\end{example}

\subsection{A combinatorial interpretation}\label{ss:results2}
Our second main result gives a combinatorial interpretation of~\eqref{eq:cor}. When applied to $Z(M)$ for some matroid $M$, it essentially specializes to 
Theorem~\ref{thm:partition}, in a sense that we make precise in Example~\ref{eg:matcont1.5}.

Let $Z$ be a non-degenerate multimatroid having carrier $(U,\Omega)$ and let $\prec$ be a total ordering of its skew classes. Given a basis $B$ of $Z$, let
\begin{align*}
B_{\insa} := \{B_{\omega} : \omega \in \act_{\prec}(B)\},\\
\shortintertext{and}
B_{\outa} := \{\underline B_{\omega} : \omega \in \act_{\prec}(B)\}.
\end{align*}
Again, we require non-degeneracy here to guarantee that $B$ is a transversal so that $B_{\omega}$ is defined for every skew class $\omega$.
An element of $B_{\insa}$, respectively $B_{\outa}$, is said to be \emph{inside active}, respectively \emph{outside active}, with respect to $B$.\footnote{We have deliberately avoided using the terms internally and externally active, because the concepts we are defining here are not analogous to the usual notions of internally and externally active in matroids.}

Now let \[ H_{Z,\prec}(B) := \{ T\in \mathcal T(\Omega): B-B_{\insa} \subseteq T \subseteq B \cup B_{\outa}\}.\]
Thus, each element of $H_{Z,\prec}(B)$ is obtained from $B$ by removing some (or possibly none) of its inside active elements and replacing them with the
corresponding outside active elements from the same skew classes. We let $\mathcal H_{Z,\prec}:= \{H_{Z,\prec}(B) : B\in \mathcal B(Z)\}$. Note that $|H_{Z,\prec}(B)|=2^{|{\act_{\prec}(B)}|}$. 
So, from Equation~\eqref{eq:cor} we get 
\[\sum_{T\in \mathcal {T}(Z)} (q-1)^{n(T)}=\sum_{B\in \mathcal B(Z)}|H_{Z,\prec}(B)|=\sum_{H\in \mathcal H_{Z,\prec}}|H|.\]

To state the next results we need the following definition. Given a subtransversal $S$ of a multimatroid $Z$, define
\[ \mce_{Z}(S,\prec) = \{\min(C): C\subseteq S, \text{ and } C\in \mathcal C(Z)\}.\]
Then we define $\mcs_{Z}(S,\prec)$ to be the collection of skew classes meeting $\mce_Z(S,\prec)$. In both cases, we omit the dependence on $Z$ when this is obvious. Thus $\mce_{Z}(S,\prec)$ is the collection of elements of $S$ which are the least element of a circuit $C\subseteq S$ and $\mcs_{Z}(S,\prec)$ is the corresponding collection of skew classes. Obviously, 
$|{\mce_{Z}(S,\prec)}|$ $=$ $|{\mcs_{Z}(S,\prec)}|$.

The following lemma is surely known, but we cannot find an explicit reference. We use $\cl$ to denote the closure operator of a matroid~\cite{MR1207587}.

\begin{lemma}\label{lem:matroidmincircuit}
Let $M$ be a matroid with a total ordering $\prec$ of its elements and let $S$ be a subset of its elements. Then $n(S)=|{\mce_{Z}(S,\prec)}|$.
\end{lemma}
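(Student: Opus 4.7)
The plan is to use a telescoping sum on ranks obtained by adding elements of $S$ one at a time in decreasing $\prec$-order, and match the steps where the rank fails to increase with the elements that are minima of circuits of $S$.

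For each $e\in S$, write $S^{>e}:=\{f\in S:e\prec f\}$ and $S^{\geq e}:=S^{>e}\cup\{e\}$. I would first list $S$ in decreasing order and telescope:
\[
r(S)=\sum_{e\in S}\bigl(r(S^{\geq e})-r(S^{>e})\bigr).
\]
Each summand lies in $\{0,1\}$ by submodularity (or by \ref{axiom1}-type reasoning in the matroid), so the number of zero summands is $|S|-r(S)=n(S)$. Therefore it suffices to prove that the set
\[
E_0:=\{e\in S:r(S^{\geq e})=r(S^{>e})\}
\]
coincides with $\mce_M(S,\prec)$.

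For $(\supseteq)$, suppose $e=\min(C)$ for some circuit $C\subseteq S$. Then $C-\{e\}\subseteq S^{>e}$ and $e\in\cl(C-\{e\})\subseteq\cl(S^{>e})$, so adding $e$ to $S^{>e}$ does not increase rank, giving $e\in E_0$. For $(\subseteq)$, suppose $e\in E_0$, so $e\in\cl(S^{>e})$. The main step is to extract an actual circuit with minimum $e$: take any basis $B_0$ of $S^{>e}$ in $M$; since $\cl(B_0)=\cl(S^{>e})\ni e$, the set $B_0\cup\{e\}$ is dependent, hence contains the (unique) fundamental circuit $C(B_0,e)\subseteq B_0\cup\{e\}\subseteq S^{\geq e}$. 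This circuit contains $e$, and every other element of it lies in $B_0\subseteq S^{>e}$, so $e=\min(C(B_0,e))$ and $e\in\mce_M(S,\prec)$.

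The two inclusions give $|\mce_M(S,\prec)|=|E_0|=n(S)$, as desired. The most delicate step is the forward direction: knowing only that $e$ lies in the closure of $S^{>e}$, one must produce a circuit whose elements are actually in $S$ and whose minimum is exactly $e$; passing through a basis $B_0$ of $S^{>e}$ and using the fundamental circuit is what makes this go through cleanly even when $S^{>e}$ itself is dependent.
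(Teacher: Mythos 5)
Your proof is correct, and it takes a different route from the paper's. You do a per-element accounting: telescoping the rank over the elements of $S$ in decreasing $\prec$-order, and then identifying the set $E_0$ of zero-increment steps with $\mce(S,\prec)$ — the key step being the extraction, from $e\in\cl(S^{>e})$, of a circuit inside $S^{\geq e}$ with minimum $e$ via a maximal independent subset $B_0$ of $S^{>e}$ and the unique circuit in $B_0\cup\{e\}$. That step is sound (any circuit in $B_0\cup\{e\}$ must contain $e$ since $B_0$ is independent, and its remaining elements lie in $S^{>e}$), and in fact your argument proves the slightly stronger pointwise statement that an element of $S$ fails to increase the rank of its $\prec$-successors in $S$ exactly when it is the minimum of a circuit contained in $S$. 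The paper instead argues at the level of sets with two inequalities: $S-\mce(S,\prec)$ contains no circuit of $S$, hence is independent, giving $n(S)\leq|{\mce(S,\prec)}|$; and $S\subseteq\cl(S-\mce(S,\prec))$, giving $r(S)\leq r(S-\mce(S,\prec))$ and hence $n(S)\geq|{\mce(S,\prec)}|$. The paper's version is shorter; yours is more explicit about where the nullity is located element by element, which is closer in spirit to the activity-style bookkeeping used elsewhere in the paper, at the cost of a longer write-up.
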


\begin{proof}
$S-\mce(S,\prec)$ does not contain any circuits, so it is independent. Hence $n(S)\leq |{\mce(S,\prec)}|$. On the other hand, $S\subseteq \cl(S-\mce(S,\prec))$ so $r(S)\leq r(S-\mce(S,\prec))$. Hence $n(S)\geq |{\mce(S,\prec)}|$. So the result follows.
\end{proof}

Therefore if $Z$ is a multimatroid and $\prec$ is a total ordering of its skew classes, then for every subtransversal $S$ of $Z$, we have $n(S)=|{\mce(S,\prec)}|$.

We now state and prove our second main result.
\begin{theorem}\label{thm:maingen2}
Let $Z=(U,\Omega,r)$ be a non-degenerate multimatroid and let $\prec$ be a total ordering of its skew classes. Then each transversal $T$ of $\Omega$ appears in exactly 
$\prod_{\omega \in \mcs(T,\prec)} (|\omega|-1)$ members of $\mathcal H_{Z,\prec}$.
\end{theorem}

\begin{proof}
We proceed by induction on $|\Omega|$. If $|\Omega|=0$, then the result is immediate. So we may suppose that $|\Omega|>0$. Let $\gamma$ be the greatest skew class according to $\prec$.

Suppose first that $\gamma$ is singular and the element $e$ in $\gamma$ satisfies $r(e)=0$. Let $B$ be a basis of $Z$. By Lemma~\ref{lem:activitiessingular}, $\act_{Z,\prec}(B)=\act_{Z\setminus\gamma,\prec}(B-\gamma)\cup \{\gamma\}$. Therefore
\begin{equation} \label{eq:HZ} H_{Z,\prec}(B) = \{ S\cup \{B_{\gamma}\}, S\cup \{e\}: S\in H_{Z\ba \gamma}(B-\gamma)\}.\end{equation}
Now let $T$ be a transversal of $\Omega$. If $T_\gamma\ne e$, then by Lemma~\ref{lem:circuits},
$\mcs_Z(T,\prec)=\mcs_{Z\setminus \gamma}(T-\gamma,\prec)$.
Using the inductive hypothesis, $T-\gamma$ appears in exactly 
\[\prod_{\omega \in \mcs_{Z\setminus \gamma}(T-\gamma,\prec)} (|\omega|-1)\] members of
$\mathcal H_{Z|T_{\gamma},\prec}$. So, by Equation~\eqref{eq:HZ}, $T$ appears in exactly  
\[\prod_{\omega \in \mcs_{Z\setminus \gamma}(T-\gamma,\prec)} (|\omega|-1)
=\prod_{\omega \in \mcs_{Z}(T,\prec)} (|\omega|-1)\]
members of $\mathcal H_{Z,\prec}$. Now suppose that $T_{\gamma}=e$. By Lemma~\ref{lem:circuits}, 
$\mcs_Z(T,\prec)=\mcs_{Z\setminus \gamma}(T-\gamma,\prec)\cup \{\gamma\}$.
Using the inductive hypothesis, $T-\{e\}$ appears in exactly 
\[\prod_{\omega \in \mcs_{Z\setminus \gamma}(T-\gamma,\prec)} (|\omega|-1)\]
members of
$\mathcal H_{Z\ba \gamma,\prec}$. For every $y$ in $\gamma -\{e\}$, and every basis $B$ of $Z\ba \gamma$, it follows from Equation~\eqref{eq:HZ} that for a transversal $S$ of $Z\ba \gamma$,
$S \in H_{Z\setminus\gamma}(B)$ if and only if $S\cup \{e\} \in H_{Z}(B\cup \{y\})$. Hence $T$ appears in exactly 
\[(|\gamma|-1) \prod_{\omega \in \mcs_{Z\setminus \gamma}(T-\gamma,\prec)} (|\omega|-1)
=\prod_{\omega \in \mcs_{Z}(T,\prec)} (|\omega|-1)\]
members of $\mathcal H_{Z,\prec}$, as required.

Now suppose that $\gamma$ is non-singular. Let $T$ be a transversal of $\Omega$ and let $x:=T_{\gamma}$. We claim that $\mcs_{Z|x}(T-\{x\},\prec)=\mcs_Z(T,\prec)$. To see this, suppose that $\omega \in \mcs_{Z|x}(T-\{x\},\prec)$. Then $Z|x$ has a circuit $C$ with $C\subseteq T-\{x\}$ and $\min (C)\in \omega$. 
Using Lemma~\ref{lem:usefulnull}, we deduce that either $C$ or $C\cup\{x\}$ is a circuit of $Z$. As $x\in \gamma$ and $\gamma$ is the greatest skew class according to $\prec$, we deduce that \mbox{$\omega \in \mcs_Z(T,\prec)$}. 
So $\mcs_{Z|x}(T-\{x\},\prec)\subseteq \mcs_Z(T,\prec)$.
Using Lemma~\ref{lem:usefulnull} once more, we see that $n_{Z|x}(T-\{x\})=n_Z(T)$. Hence, by Lemma~\ref{lem:matroidmincircuit}, $|{\mcs_{Z|x}(T-\{x\},\prec)}|=|{\mcs_Z(T,\prec)}|$, and the claim follows.

Using the inductive hypothesis, $T-\{x\}$ is contained in exactly 
\[ \prod_{\omega \in \mcs_{Z|x}(T-\{x\},\prec)} (|\omega|-1)
=\prod_{\omega \in \mcs_{Z}(T,\prec)} (|\omega|-1)\] members of the collection
$\mathcal H_{Z|x},\prec$. As $\gamma$ is non-singular and $\gamma$ is the greatest skew class according to $\prec$, it is not active with respect to $B$. So, by Lemma~\ref{lem:activitiesnonsingular},
 \[ H_{Z,\prec}(B) = \{S\cup\{x\} : S\in H_{Z|x}(B-\{x\})\}.\]
 Thus $T$ is contained in exactly 
$\prod_{\omega \in \mcs_{Z}(T,\prec)} (|\omega|-1)$ members of $\mathcal H_{Z,\prec}$, as required.
\end{proof}

The next corollary follows immediately from Lemma~\ref{lem:matroidmincircuit} and Theorem~\ref{thm:maingen2}.
\begin{corollary}
\label{cor:main2}
Let $Z=(U,\Omega,r)$ be a $q$-matroid with $q\geq 2$ and let $\prec$ be a total ordering of its skew classes. Then each transversal $T$ of $\Omega$ appears in exactly $(q-1)^{n(T)}$ members of $\mathcal H_{Z,\prec}$.
\end{corollary}

\begin{example}[Examples~\ref{example} and~\ref{eg:examplectd.1} continued]\label{eg:examplectd.2}  We again define a total ordering $\prec$ on the skew classes of $Z$, the $3$-matroid described in Example~\ref{example}, by requiring that $\omega_a\prec \omega_b\prec \omega_c$.
Recall that $Z$ has the following four bases having two active skew classes,
with the
active skew classes being $\omega_a$ and $\omega_b$ in each case:
$\{\dott{a}, \dott{b}, \dott{c}\}$,
$\{ \widehat{a}, \dott{b}, \dott{c}\}$,
$\{ \dott{a}, \widehat{b}, \dott{c}\}$ and
$\{ \widehat{a}, \widehat{b}, \dott{c}\}$. The  one dependent transversal
$\{ \overline{a}, \overline{b}, \dott{c}\}$ with nullity two belongs to the set $H_{Z,\prec}(B)$ for each of these four bases $B$.
The remaining twelve bases have one active skew class which is $\omega_a$ in each case. The ten dependent transversals of nullity one belong to exactly two of the sets in $\mathcal H_{Z,\prec}$. For example, the dependent transversal $\{ \overline{a}, \overline{b}, \overline{c}\}$ belongs to
$H_{Z,\prec}(\{ \dott{a}, \overline{b}, \overline{c}\})$ and $H_{Z,\prec}(\{ \widehat{a}, \overline{b}, \overline{c}\})$.
\expleend\end{example}

The special case of Corollary~\ref{cor:main2} when $q=2$ shows that the members of $\mathcal H_{Z,\prec}$ partition $\mathcal T(\Omega)$. 
This extends Theorem~\ref{thm:partition} as the next example illustrates.

\begin{example}[Examples~\ref{eg:matroid} and~\ref{eg:matcont} continued]\label{eg:matcont1.5}
In this example we apply Theorem~\ref{thm:maingen2} to matroids to show that Theorem~\ref{thm:partition} is a special case of Theorem~\ref{thm:maingen2} and therefore that this result also generalizes classical matroid results. 
As in Example~\ref{eg:matcont} we suppose that $M$ is a matroid with a total ordering $\prec$ of its set $E$ of elements, and also use $\prec$ to denote the corresponding total ordering of the skew classes of $Z(M)$.

Let $B$ be a basis of $M$. Then
\[ H_{Z(M),\prec}(\phi(B)) := \{ T\in \mathcal T(\Omega): \phi(B)-\phi(B)_{\insa} \subseteq T \subseteq \phi(B) \cup \phi(B)_{\outa}\}.\]
As we observed in Example~\ref{eg:matcont}, a skew class $\omega_e$
is active with respect to $\phi(B)$ in $Z(M)$ if and only if $e$ is internally or
externally active with respect to $B$ in $M$.
Hence
\begin{align*} \phi(B)_{\insa} &= \{\dott e: e\in \inta_{M,\prec}(B)\} 
\cup \{\overline e: e\in \exta_{M,\prec} (B)\}\\
\shortintertext{and}
\phi(B)_{\outa} &= \{\overline e: e\in \inta_{M,\prec} (B)\} 
\cup \{\dott e: e\in \exta_{M,\prec} (B)\}.\end{align*}
Thus $\phi^{-1}(H_{Z(M),\prec}(\phi(B))) = [B-\inta_{M,\prec}(B),B\cup \exta_{M,\prec}(B)]$.
Because $Z(M)$ is a $2$-matroid, Corollary~\ref{cor:main2} implies that the collection $\mathcal H_{Z(M),\prec}$ partitions $\mathcal T(Z(M))$. As $\phi$ is a bijection (both considered as a map from $2^E$ to $\mathcal T(Z(M))$ and as a map from $\mathcal B(M)$ to $\mathcal B(Z(M))$), this implies that the collection of Boolean intervals \[\{[B-\inta_{M,\prec}(B), B\cup \exta_{M,\prec}(B)]: B\in \mathcal B(M)\}\] partitions $2^E$, which is the conclusion of Theorem~\ref{thm:partition}.
\expleend\end{example}

\section{Compatible transversals}\label{s:kochol}

In this section we extend from matroids to multimatroids a more recent expression for the Tutte polynomial introduced by Kochol~\cite{zbMATH07347313}. We begin by extending Kochol's ideas to define a notion of compatibility for the transversals of a multimatroid. This leads to our third main result, Theorem~\ref{thm:cocomp} which gives an expression for the transition polynomial as a sum over compatible transversals.
In the second half of the section we explore the links between compatibility and the collection $\mathcal H_{Z,\prec}$.
This suggests an equivalence relation on the bases of a non-degenerate multimatroid leading to our fourth main result, Theorem~\ref{thm:equiv}, which in turn leads to another expression for the transition polynomial.

\subsection{A compatible transversals expansion of the transition polynomial}

Given a matroid $M$ with a total ordering $\prec$ of its elements, Kochol~\cite{zbMATH07347313} defines a subset $A$ of the elements of $M$ to be $(M,\prec)$-\emph{compatible} if and only if $M$ has no circuit $C$ such that $A \cap C= \{\min (C)\}$. The set $\mathcal D(M,\prec)$ is then defined as follows.
\begin{multline*} \mathcal D(M,\prec) \\:= \{A\subseteq  E(M): \text{$A$ is $(M^\ast,\prec)$-compatible and $E(M)-A$ is $(M,\prec)$-compatible}\}.\end{multline*}
This leads to the following expression for the Tutte polynomial of $M$.
\begin{theorem}[Kochol~\cite{zbMATH07347313}]\label{thm:Tuttecompat}
Let $M=(E,r)$ be a matroid and let $\prec$ be a total ordering on $E$. Then
    \[T(M;x,y) = \sum_{A \in \mathcal D(M,\prec) } x^{r(E)-r(A)}y^{|A|-r(A)}.\]
\end{theorem}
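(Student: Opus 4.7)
The plan is to combine Tutte--Crapo's activities Theorem~\ref{thm:tutteact} with Crapo's partition Theorem~\ref{thm:partition}. The central claim is that within each Boolean interval $I(B)$ from Crapo's partition, exactly one set lies in $\mathcal D(M,\prec)$, namely $\widetilde B := (B - \inta(B)) \cup \exta(B)$. Once this is established together with the rank identities $r(E) - r(\widetilde B) = |{\inta(B)}|$ and $|\widetilde B| - r(\widetilde B) = |{\exta(B)}|$, Kochol's sum rearranges to $\sum_{B \in \mathcal B(M)} x^{|{\inta(B)}|} y^{|{\exta(B)}|}$, which equals $T(M;x,y)$ by Theorem~\ref{thm:tutteact}.

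The main technical tool is the following consequence of Whitney's orthogonality $|C \cap C^*| \neq 1$: for every $e \in \exta(B)$ and $f \in \inta(B)$, one has $f \notin C(B,e)$ (equivalently $e \notin C^*(B,f)$). Indeed, suppose for contradiction that $f \in C(B,e)$; since $f \in C^*(B,f)$, Whitney yields a second element $h \in C(B,e) \cap C^*(B,f)$, and the containments $C(B,e) \subseteq B \cup \{e\}$ and $C^*(B,f) \subseteq (E - B) \cup \{f\}$ force $h \in \{e,f\}$, so $h = e$. But now $f \in C(B,e)$ gives $f \succ e$ by minimality of $e$, while $e = h \in C^*(B,f)$ gives $e \succ f$ by minimality of $f$, a contradiction. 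A corollary is that $C(B,e) - \{e\} \subseteq B - \inta(B) \subseteq \widetilde B$ for each $e \in \exta(B)$, so $r(\widetilde B) = |B - \inta(B)| = r(E) - |{\inta(B)}|$, yielding the rank identities.

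For the uniqueness of $\widetilde B$ in $I(B) \cap \mathcal D(M,\prec)$, take $A = (B - X) \cup Y \in I(B)$ with $(X,Y) \neq (\inta(B), \exta(B))$. If $X \subsetneq \inta(B)$, pick $e \in \inta(B) - X$; the orthogonality above gives $C^*(B,e) \cap \exta(B) = \emptyset$, so $A \cap C^*(B,e) = \{e\} = \{\min C^*(B,e)\}$, violating the $(M^*,\prec)$-compatibility of $A$. A symmetric argument using $C(B,e)$ for $e \in \exta(B) - Y$ handles the case $Y \subsetneq \exta(B)$, this time violating the $(M,\prec)$-compatibility of $E - A$.

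It remains to verify that $\widetilde B$ itself belongs to $\mathcal D(M,\prec)$, and this is the main obstacle. Suppose some cocircuit $C^*$ satisfied $\widetilde B \cap C^* = \{\min C^*\} = \{e\}$; the orthogonality forces $e \in B - \inta(B)$, since $e \in \exta(B)$ would contradict it via the same argument. I would then induct on $|C^* \cap \inta(B)|$: when this is zero, $C^* \subseteq (E - B) \cup \{e\}$ so $C^* = C^*(B,e)$, forcing $e \in \inta(B)$, a contradiction; otherwise, pick $f \in C^* \cap \inta(B)$ and apply cocircuit elimination to $C^*$ and $C^*(B,f)$ at $f$ to produce a cocircuit $C^{**}$ with $e \in C^{**}$, $\min C^{**} = e$, $\widetilde B \cap C^{**} = \{e\}$, and $|C^{**} \cap \inta(B)| < |C^* \cap \inta(B)|$. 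The circuit case for $E - \widetilde B$ is symmetric. The essential difficulty is that unlike in the uniqueness argument, the hypothetical violating cocircuit is arbitrary and must be reduced via iterated elimination to a fundamental one where the definitions of $\inta(B)$ and $\exta(B)$ apply directly.
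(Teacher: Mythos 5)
Your proof is correct, but it takes a genuinely different route from the paper. The paper does not prove Theorem~\ref{thm:Tuttecompat} by a direct matroid argument at all: it deduces it as a specialization of the multimatroid machinery, namely the cocompatible-sets expansion of the weighted transition polynomial (Theorem~\ref{thm:cocomp}) applied to the $2$-matroid $Z(M)$, together with the translation showing $A\in\mathcal D(M,\prec)$ iff $\dott A\cup\overline{(E-A)}$ is $(Z(M),\prec)$-cocompatible and the change of variables~\eqref{eq:tuttetrans} relating $Q(Z(M))$ to $T(M)$. You instead work entirely inside matroid theory: Whitney orthogonality gives $f\notin C(B,e)$ for $f\in\inta(B)$, $e\in\exta(B)$, hence the rank identities for $\widetilde B=(B-\inta(B))\cup\exta(B)$; Crapo's partition (Theorem~\ref{thm:partition}) plus your uniqueness argument shows each interval $I(B)$ meets $\mathcal D(M,\prec)$ only in $\widetilde B$; and the existence half ($\widetilde B\in\mathcal D$) is settled by your induction via strong (co)circuit elimination, whose key verifications --- that $\widetilde B\cap C^*(B,f)=\emptyset$, that $\min C^{**}=e$ because all of $C^*(B,f)$ exceeds $f\succ e$, and that $|C^{**}\cap\inta(B)|$ strictly drops --- all check out, as does the symmetric circuit case. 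This bijection-with-bases strategy is essentially the approach the paper attributes to Kochol's later proof and to Pierson, rather than the paper's own. What each buys: the paper's route requires no new matroid work (the theorem falls out of the general multimatroid statement, and your $\widetilde B$ is exactly the cocompatible closure $\ccl_{\prec}$ of the basis transversal $T(B)$ in that language), and it situates Kochol's expansion inside the partition of transversals by cocompatible closure; your route is self-contained, needs neither multimatroids nor the transition polynomial, and makes the bijection $\mathcal B(M)\to\mathcal D(M,\prec)$, $B\mapsto\widetilde B$, completely explicit, at the cost of the iterated-elimination argument you correctly identify as the main obstacle.
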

Substituting $x=y=1$ shows that
$|\mathcal D(M,\prec)|$ is equal to the number of bases of $M$. 
Kochol's original proof of Theorem~\ref{thm:Tuttecompat} used the deletion-contraction formula for the Tutte polynomial, but later proofs by Kochol~\cite{zbMATH07725327} and Pierson~\cite{zbMATH07814958} have used the activities expansion from Theorem~\ref{thm:tutteact} and established a bijection between the collection of bases and $\mathcal D(M,\prec)$. We shall introduce the notion of a compatible transversal of a multimatroid $Z$ and a mapping from $\mathcal T(Z)$ to the set of compatible transversals of $Z$. For a matroid $M$ we shall see in Example~\ref{eg:matcont3} that this mapping restricts to a bijection from $\mathcal B(Z(M))$ to the set of compatible transversals equivalent to that of Kochol~\cite{zbMATH07725327} and Pierson~\cite{zbMATH07814958} via the 
bijection $\phi$ described in Example~\ref{eg:matroid}.
The form of $\mathcal D(M,\prec)$ suggests that it is particularly amenable to an extension to multimatroids, as it mentions both the circuits and cocircuits of $M$, that is, the entire collection of circuits of the $2$-matroid $Z(M)$. 

\begin{definition}Given a multimatroid $Z$ and a total ordering $\prec$ of its skew classes, we say that a transversal $T$ of $Z$ is $(Z,\prec)$-\emph{compatible} if there is no circuit $C$ of $Z$ such that $C-T=\{\min(C)\}$. 
\end{definition}
We warn the reader that our concept of compatibility does not directly generalize that introduced by Kochol, in the sense that the compatible sets of a matroid $M$ do not coincide with those of $M$ considered as a $1$-matroid, nor with those of the $2$-matroid $Z(M)$. Instead, there is a close relationship between $\mathcal D(M,\prec)$ and compatible sets of $Z(M)$, as illustrated in the following example.

\begin{example}[Examples~\ref{eg:matroid},~\ref{eg:matcont}, and~\ref{eg:matcont1.5} continued]\label{eg:matcont2}     Let $M$ be a matroid with a total ordering $\prec$ of its set $E$ of elements. Let $\prec$ also denote the total ordering induced on the skew classes of $Z(M)$.
    Then a subset $A$ of $E$ is in $\mathcal D(M,\prec)$ if and only if $E-A$ is $(M,\prec)$-compatible and $A$ is $(M^*,\prec)$-compatible. The former occurs if and only if there is no circuit $C$ of $M$ with $(E-A)\cap C =\{\min(C)\}$ or equivalently $C-A=\{\min (C)\}$. This happens if and only if $Z(M)$ has no circuit $C$ with $C\subseteq \dott E$ such that 
$C-\dott A=\{\min (C)\}$. The latter occurs if and only if there is no cocircuit $C^*$ of $M$ with $A\cap C^*= \{\min(C^*)\}$. This happens if and only if $Z(M)$ has no circuit $C$ with $C\subseteq \overline E$ such that $\overline A\cap C= \{\min(C)\}$ or equivalently $C - \overline {(E-A)} = \{\min(C)\}$. Combining these conditions we see that a subset $A$ of $E(M)$ is in $\mathcal D(M,\prec)$ if and only if 
$\phi(A)$ is $(Z(M),\prec)$-compatible.
\expleend\end{example}

Consider the following process to construct a transversal $T$ of $Z$, element by element. First, start with $T=\emptyset$. Then at each stage take the greatest skew class $\omega$ according to $\prec$ with $T\cap \omega = \emptyset$. If there is an element $e$ of $\omega$ such that $r(T\cup\{e\})=r(T)$ then add $e$ to $T$. Note that, by Lemma~\ref{lem:rankssingular}, if such an element $e$ exists then it must be unique. Otherwise, add any element of $\omega$ to $T$ and repeat until $T$ is a transversal. Then this process always leads to a $\prec$-compatible transversal, and by making appropriate choices may yield any $\prec$-compatible transversal.

\begin{example}[Examples~\ref{example},~\ref{eg:examplectd.1}, and~\ref{eg:examplectd.2} continued]\label{eg:examplectd.3}
Recall that $Z$, the $3$-matroid described in Example~\ref{example}, has the following seven circuits: 
$\{ \overline{a}, \overline{b}\}$,
$\{ \overline{a}, \dott {c}\}$,
$\{ \overline{b}, \dott {c}\}$,
$\{ \dott {a}, \dott {b}, \overline{c}\}$,
$\{ \widehat{a}, \widehat{b}, \overline{c}\}$,
$\{ \widehat{a}, \dott {b}, \widehat{c}\}$ and
$\{ \dott {a}, \widehat{b}, \widehat{c}\}$.
We again define a total ordering $\prec$ on the skew classes of $Z$, by requiring that $\omega_a\prec \omega_b\prec \omega_c$.
Then the $\prec$-compatible transversals of $Z$ are
$\{ \overline{a}, \overline{b}, \dott{c}\}$,
$\{ \overline{a}, \overline{b}, \overline{c}\}$,
$\{ \dott{a}, \dott{b}, \overline{c}\}$,
$\{ \widehat{a}, \widehat{b}, \overline{c}\}$,
$\{ \overline{a}, \overline{b}, \widehat{c}\}$,
$\{ \widehat{a}, \dott{b}, \widehat{c}\}$,
$\{ \dott{a}, \widehat{b}, \widehat{c}\}$.
In $\{ \overline{a}, \overline{b}, \dott{c}\}$ the choices of both $\overline{a}$ and $\overline{b}$ were forced by the choice of $\dott{c}$, whereas in the other six $\prec$-compatible transversals only the choice of element from $\omega_a$ was forced by the choices of elements from $\omega_b$ and $\omega_c$. Observe that changing every such forced element to any one of the other elements from its skew class gives a basis and every basis can be constructed in this way. The fact that this phenomenon always works leads us to Theorem~\ref{thm:equiv} near the end of this section. 
\expleend\end{example}

Let $Z$ be a multimatroid with a total ordering $\prec$ of its skew classes, and let $T$ be a transversal of $Z$. Recall that a skew class $\omega$ is active with respect to $T$ if there is a circuit $C$ of $Z$ with $\min (C)\in \omega$ and $C-\omega \subseteq T$.

The following lemmas will be crucial. The first is a straightforward consequence of Proposition~\ref{prop:uniqueskewpair}.

\begin{lemma}\label{lem:compatunique}
    Let $Z$ be a multimatroid with a total ordering $\prec$ of its skew classes, and let $T$ be a transversal of $Z$. Let $\omega$ be a skew class of $Z$, and $C_1$ and $C_2$ be circuits of $Z$ such that for $i\in\{1,2\}$, $\min (C_i)  \in\omega$ and $C_i-\omega\subseteq T$. Then $\min(C_1)=\min(C_2)$. 
\end{lemma}

\begin{lemma}\label{lem:compatcruc}
    Let $Z$ be a multimatroid with a total ordering $\prec$ of its skew classes, and let $T$ be a transversal of $Z$. If $\omega$ is active with respect to $T$, then $Z$ has a circuit $C$ such that $\min (C) \in \omega$, $C-\omega \subseteq T$ and $C$ meets no skew class other than $\omega$ that is active with respect to $T$.
\end{lemma}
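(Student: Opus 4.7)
My approach is strong induction on the position of $\omega$ in $\prec$, working from the largest skew class downward. The base case, when $\omega$ is the greatest skew class, is immediate: any circuit $C$ witnessing $\omega$'s activity has $\min(C)\in\omega$, and since no element can have a larger skew class, $C\subseteq\omega$; because $C$ is a subtransversal, $C=\{\min(C)\}$ meets no skew class other than $\omega$.

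For the inductive step, I assume the lemma holds for every active skew class strictly greater than $\omega$. Let $\mathcal F$ be the (nonempty) collection of circuits $C$ of $Z$ with $\min(C)\in\omega$ and $C-\omega\subseteq T$, and choose $C\in\mathcal F$ minimizing the number $\alpha(C)$ of active skew classes distinct from $\omega$ that are met by $C$. Supposing $\alpha(C)>0$, I select an active $\omega'\neq\omega$ met by $C$; necessarily $\omega\prec\omega'$ since $\min(C)\in\omega$. Let $e':=C\cap\omega'$, and use the inductive hypothesis to produce a witness circuit $C'$ for $\omega'$ meeting no active skew class other than $\omega'$, with $C'\cap\omega=\emptyset$ because $\min(C')\in\omega'\succ\omega$. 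The first key step is a skew-pair analysis: if $e'\neq\min(C')$, then $\{e',\min(C')\}$ would be the unique skew pair in $C\cup C'$ --- at $\omega$ only $\min(C)$ appears, and at any other skew class $\omega''$ both $C$ and $C'$ can meet $\omega''$ only in the single element $T_{\omega''}$ --- contradicting Proposition~\ref{prop:uniqueskewpair}. Hence $e'=\min(C')$, and the same bookkeeping shows $C\cup C'$ is a subtransversal. I extend $C\cup C'$ to a transversal $T^*$; by axiom~\ref{axiom1}, $(T^*,r|_{T^*})$ is a matroid in which $C$ and $C'$ are distinct circuits sharing $e'$, and matroid circuit elimination yields a circuit $C''\subseteq(C\cup C')-\{e'\}$ containing $\min(C)$. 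Routine checks show $C''$ is also a circuit of $Z$ (dependence transfers, and minimality follows because any proper dependent subset would violate $C''$ being a circuit of the restricted matroid), lies in $\mathcal F$ (as every element of $C''$ other than $\min(C)$ has skew class $\succ\omega$, and $C''-\omega\subseteq(C-\omega)\cup(C'-\omega')\subseteq T$), and satisfies $\alpha(C'')<\alpha(C)$, since the active classes met by $C''$ are contained in those met by $C$ or $C'$ with $\omega'$ removed and the $C'$-contribution is empty by the inductive choice of $C'$.

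The main obstacle is choosing the correct inductive measure. A purely local minimization of $\alpha(C)$ without an inductive guarantee on $C'$ breaks down, because the elimination step can a priori reintroduce active skew classes via elements of $C'$ that $C$ did not meet. Using strong induction on the position of $\omega$ in $\prec$ forces $C'$ to be a clean witness --- one that meets no active skew class other than $\omega'$ --- so the elimination strictly removes $\omega'$ from $\alpha$ while adding nothing. The other delicate point is the skew-pair bookkeeping in $C\cup C'$, which relies on the observation that $\min(C)\in\omega$ forces every other element of $C$ to lie in a strictly larger skew class; this constrains the intersection structure just enough for Proposition~\ref{prop:uniqueskewpair} to apply.
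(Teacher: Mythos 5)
Your proof is correct and takes essentially the same route as the paper's: the paper phrases your downward strong induction as a choice of the greatest ``failing'' active skew class, then likewise picks a witnessing circuit meeting fewest active skew classes, aligns it with a clean witness for $\omega'$ via Proposition~\ref{prop:uniqueskewpair}, and eliminates to contradict minimality. The only cosmetic point is that the elimination step yielding a circuit containing $\min(C)$ and avoiding $e'$ is the \emph{strong} circuit elimination axiom (which the paper cites explicitly), not the weak form.
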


\begin{proof}
%By definition, if $\omega$ is active with respect to $T$, then there is a circuit $C$ with %$\min (C)\in \omega$ and $C-\omega \subseteq T$.
    Suppose for contradiction that there is a skew class $\omega$ which is active with respect to $T$, and every circuit $C$ with $\min (C)\in\omega$ and $C-\omega \subseteq T$ meets some active skew class other than $\omega$. Choose $\omega$ to be the greatest skew class for which this is true and let $C'$ be a circuit with $\min(C')\in \omega$, $C'-\omega\subseteq T$ meeting as few active skew classes with respect to $T$ as possible. (Note that $C'$ 
exists because $\omega$ is active).
    Then $C'$ must meet at least one skew class $\omega'$ which is active with respect to $T$ and satisfies $\omega\prec \omega'$. By the maximality of $\omega$, there is a circuit $C''$ with $\min (C'')\in \omega'$, $C''-\omega'\subseteq T$ such that $C''$ meets no active skew class other than $\omega'$. By Proposition~\ref{prop:uniqueskewpair}, $C'_{\omega'} = C''_{\omega'}$, so $C'\cup C''$ is a subtransversal of $Z$.
    Hence the strong circuit elimination axiom of matroids~\cite[Proposition~1.4.12]{MR1207587}, implies that there is a circuit $C'''$ of $Z$ with $C'_{\omega} \in C''' \subseteq (C'\cup C'')-\{C'_{\omega'}\}$. Then $\min(C''')=\min (C')\in \omega$, $C'''-\omega \subseteq T$ and $C'''$ meets fewer active skew classes than $C'$ contradicting the definition of $C'$. So the result follows.  
\end{proof}

As a consequence of Lemma~\ref{lem:compatunique}, if $\omega$ is active with respect to the transversal $T$, then we may define 
 $\underline T_{\omega}$ to denote the common element $\min(C)$ of every circuit $C$ with $\min(C)\in \omega$ and $C-\omega \subseteq T$. 
 (This extends our earlier definition of $\underline B_{\omega}$ when $B$ is a basis.) When $T$ is not a basis it is possible that  
$\underline T_{\omega} \in T$.

The next lemma shows that if we change a transversal by replacing one of its elements from an active skew class by another from that skew class, then we do not change the collection of active skew classes. We recall that $A\bigtriangleup B$ denotes the \emph{symmetric difference}  $(A\cup B) - (A \cap B)$ of the sets $A$ and $B$.
\begin{lemma}\label{lem:switchskew}
    Let $Z$ be a multimatroid with a total ordering $\prec$ of its skew classes, and let $T$ be a transversal of $Z$.
    Suppose that $\omega$ is active with respect to $T$ and that $\{T_\omega,y\}$ is a skew pair. Let $T':=T\bigtriangleup \{T_\omega,y\}$. Then $\act_{Z,\prec}(T)=\act_{Z,\prec}(T')$. Moreover, if $\omega' \in \act_{Z,\prec}(T)$, then 
    $\underline T_{\omega'}=\underline T'_{\omega'}$.
\end{lemma}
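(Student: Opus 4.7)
The plan is to exploit Lemma~\ref{lem:compatcruc}: for each $\omega' \in \act_{Z,\prec}(T)$ we can select a witnessing circuit $C_{\omega'}$ with $\min(C_{\omega'}) \in \omega'$, $C_{\omega'} - \omega' \subseteq T$, and (crucially) $C_{\omega'}$ meeting no active skew class other than $\omega'$. I would then argue that each such $C_{\omega'}$ still witnesses activity of $\omega'$ with respect to $T'$, because the swap $T \mapsto T'$ alters $T$ only inside $\omega$.

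Concretely, for $\omega' \neq \omega$, since $\omega \in \act_{Z,\prec}(T)$, the selected circuit $C_{\omega'}$ does not meet $\omega$, so $C_{\omega'} - \omega' \subseteq T \setminus \omega \subseteq T'$. For $\omega' = \omega$, any circuit $C$ witnessing activity meets $\omega$ in exactly one element, namely $\min(C)$ (as circuits are subtransversals), so $C - \omega$ contains no element of $\omega$ and hence $C - \omega \subseteq T'$ as well. In both cases the same circuit works on either side of the swap, giving $\act_{Z,\prec}(T) \subseteq \act_{Z,\prec}(T')$. The moreover clause then follows: $\underline T_{\omega'}$ and $\underline T'_{\omega'}$ (well-defined by Lemma~\ref{lem:compatunique}) both equal $\min(C_{\omega'})$.

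For the reverse inclusion I would appeal to symmetry. Since the previous step establishes $\omega \in \act_{Z,\prec}(T')$ and $T \mapsto T'$ is an involution (note $\{T_\omega, y\} = \{T'_\omega, T_\omega\}$ is also a skew pair in $\omega$), exactly the same argument with the roles of $T$ and $T'$ interchanged yields $\act_{Z,\prec}(T') \subseteq \act_{Z,\prec}(T)$.

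The main obstacle is the case $\omega' \neq \omega$: a priori a witnessing circuit for $\omega'$ could pass through $\omega$ and in fact contain $T_\omega$, in which case the swap would destroy the witness. Lemma~\ref{lem:compatcruc} is exactly what is needed to route around this, since it guarantees a witness avoiding every other active skew class, and in particular avoiding the currently active $\omega$. Beyond this, the argument is entirely bookkeeping.
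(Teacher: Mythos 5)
Your proposal is correct and follows essentially the same route as the paper: both directions rest on Lemma~\ref{lem:compatcruc} to choose a witnessing circuit avoiding all other active skew classes (hence avoiding $\omega$), so the witness survives the swap, with Lemma~\ref{lem:compatunique} giving the equality of $\underline T_{\omega'}$ and $\underline T'_{\omega'}$. Your symmetry appeal for the reverse inclusion (using that $\omega\in\act_{Z,\prec}(T')$ by the forward step) is just a repackaging of the paper's "from the first part" argument.
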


\begin{proof}
    Suppose that $\omega'$ is active with respect to $T$. Then by Lemma~\ref{lem:compatcruc}, there is a circuit $C$ such that $\min(C) \in \omega'$ and $C-\omega'\subseteq T$, meeting no skew class other than $\omega'$ that is active with respect to $T$. Then $C-\omega'\subseteq T'$, so $\omega'\in \act_{Z,\prec}(T')$ and $\underline T_{\omega'}=\underline T'_{\omega'}$. 
    
    Now suppose that $\omega'$ is active with respect to $T'$. We may assume that $\omega'\ne \omega$. 
Then by Lemma~\ref{lem:compatcruc}, there is a circuit $C$ such that $\min(C) \in \omega'$ and $C-\omega'\subseteq T'$, meeting no skew class other than $\omega'$ that is active with respect to $T'$. So, from the first part, $C$ meets no skew class other than $\omega'$ that is active with respect to $T$.
Thus, in particular, $C\cap \omega=\emptyset$ and $C-\omega'\subseteq T$. Hence $\omega'\in \act_{Z,\prec}(T)$.
\end{proof}
    
We now see how to associate any transversal of a multimatroid $Z$ with 
a $(Z,\prec)$-compatible transversal. Given a transversal $T$ of $Z$ we define its $\prec$-\emph{canonical compatible transversal}, denoted by $\ccl_{\prec}(T)$ to be the transversal determined by setting $(\ccl_{\prec}(T))_{\omega}$ as follows.
\[ (\ccl_{\prec}(T))_{\omega}  = 
\begin{cases}
\underline T_{\omega}
& \text{if $\omega$ is active,}\\
T_{\omega} & \text{if $\omega$ is inactive.}
\end{cases}\]
We will prove that $\ccl_{\prec}(T)$ is $(Z,\prec)$-compatible in Proposition~\ref{prop:propsofcompat}.

The next result is a straightforward consequence of Lemma~\ref{lem:switchskew}.
\begin{lemma}\label{lem:cclswitchskew}
    Let $Z$ be a multimatroid with a total ordering $\prec$ of its skew classes, and let $T$ be a transversal of $Z$.
    Suppose that $\omega$ is active with respect to $T$ and that $\{T_\omega,y\}$ is a skew pair. Let $T':=T\bigtriangleup \{T_\omega,y\}$. Then $\ccl_{\prec}(T)=\ccl_{\prec}(T')$. 
\end{lemma}

We now give the key properties of $(Z,\prec)$-canonical compatible transversals.

\begin{proposition}\label{prop:propsofcompat}
    Let $Z$ be a multimatroid with a total ordering $\prec$ of its skew classes and let $T$ be a transversal of $Z$. Then each of the following holds.
\begin{enumerate}
\item\label{prop:propsofcompat.1}  $\ccl_{\prec}(T)$ and $T$ have the same active skew classes.
\item \label{prop:propsofcompat.2}
$\ccl_{\prec}(\ccl_{\prec}(T))=\ccl_{\prec}(T)$.
\item\label{prop:propsofcompat.3} $\ccl_{\prec}(T)$ is $(Z,\prec)$-compatible.
\end{enumerate}
\end{proposition}

\begin{proof}
The first and second parts follow using induction on the number of active skew classes with respect to $T$ and Lemmas~\ref{lem:switchskew} and~\ref{lem:cclswitchskew}, respectively. 

To prove the third part, suppose that $C$ is a circuit of $Z$ with $C-\{\min(C)\} \subseteq \ccl_{\prec}(T)$. Let $\omega$ be the skew class containing $\min (C)$. Then $\omega$ is active with respect to $\ccl_{\prec}(T)$. By the second part, $\min(C) = 
(\ccl_{\prec}(T))_{\omega}$. Hence $C\subseteq \ccl_{\prec}(T)$, as required.
\end{proof}

The next result is an easy consequence of the previous one.
\begin{proposition}\label{prop:compatclos} Let $Z$ be a multimatroid with a total ordering $\prec$ of its skew classes and let $T$ be a transversal of $Z$.
Then $T$ is $(Z,\prec)$-compatible if and only if $\ccl_{\prec}(T)=T$.
\end{proposition}

\begin{proof}
    Suppose that $T$ is $(Z,\prec)$-compatible. Then there is no circuit $C$ such that $C-T=\{\min(C)\}$. So for every skew class $\omega$ which is active with respect to $T$, we have $T_{\omega}=\underline{T}_{\omega}$. Hence $\ccl_{\prec}(T)=T$. 

    Conversely, suppose that $T=\ccl_{\prec}(T)$. By Proposition~\ref{prop:propsofcompat} Part~\ref{prop:propsofcompat.3}, $\ccl_{\prec}(T)$ is $(Z,\prec)$-compatible, so $T$ is too.
\end{proof}

\begin{comment}
\begin{proposition}\label{prop:compatcriterion}
Let $Z$ be a multimatroid with total ordering $\prec$ of its skew classes. 
    Let $T$ be a $(Z,\prec)$-compatible set and let $T'$ be a transversal of $Z$. Then $\ccl_{\prec}(T')=T$ if and only if 
    $T'_{\omega} = T_{\omega}$ for every skew class $\omega$ that is inactive with respect to $T$.
\end{proposition}
\end{comment}

\begin{proposition}\label{prop:compatcriterion}
Let $Z$ be a multimatroid with a total ordering $\prec$ of its skew classes 
and let $T$ and $T'$ be transversals of $Z$. Then both the following hold.
\begin{enumerate}
    \item \label{prop:compatcriterion.1} $\ccl_{\prec}(T')=\ccl_{\prec}(T)$ if and only if $T'_{\omega} = T_{\omega}$ for every skew class $\omega$ that is inactive with respect to $T$.
    \item \label{prop:compatcriterion.2} If $T'_{\omega} = T_{\omega}$ for every skew class $\omega$ that is inactive with respect to $T$, then $\act_{\prec}(T') =\act_{\prec}(T)$.
\end{enumerate}
 Moreover when $T$ is $(Z,\prec)$-compatible we have $\ccl_{\prec}(T')=T$ if and only if 
    $T'_{\omega} = T_{\omega}$ for every skew class $\omega$ that is inactive with respect to $T$.
\end{proposition}
\begin{proof}
Suppose first that $T'_{\omega} = T_{\omega}$ for every skew class $\omega$ that is inactive with respect to $T$. We prove by induction on $|T' \bigtriangleup T|$ that $\ccl_{\prec}(T)=\ccl_{\prec}(T')$ 
and $\act_{\prec}(T')=\act_{\prec}(T)$.
If $|T' \bigtriangleup T|=0$, then $T'=T$ and there is nothing to prove. Suppose then that $|T' \bigtriangleup T|>0$. Let $\omega$ be a skew class such that $T'_{\omega}\ne T_{\omega}$. Then $\omega$ is active with respect to $T$. Let $T'':=T\bigtriangleup \{T_{\omega}, T'_{\omega}\}$. By Lemma~\ref{lem:switchskew} $\act_{\prec}(T'')=\act_{\prec}{T}$ and by Lemma~\ref{lem:cclswitchskew} $\ccl_{\prec}(T'')=\ccl_{\prec}(T)$. Thus we may apply the inductive hypothesis to $T''$ and $T'$ to deduce that $\act_{\prec}(T)=\act_{\prec}(T'')=\act_{\prec}(T')$ and $\ccl_{\prec}(T)=\ccl_{\prec}(T'')=\ccl_{\prec}(T')$, establishing one direction of the first part and the second part.

For the converse of the first part, suppose that $\ccl_{\prec}(T)=\ccl_{\prec}(T')$. Let $T'':=\ccl_{\prec}(T)$.
Then by applying Proposition~\ref{prop:propsofcompat} Part~\ref{prop:propsofcompat.1} twice, $\act_{Z,\prec}(T)=\act_{Z,\prec}(T'')=\act_{Z,\prec}(T')$, so it follows from the definition of the canonical compatible closure that $T_{\omega}=T'_{\omega}$ for every skew class $\omega$ that is inactive with respect to $T$.

The last part follows immediately from the first part and Proposition~\ref{prop:compatclos}.
\end{proof}

We next establish a simple expression for the nullity of a transversal $T$ depending on $\ccl_{\prec}(T)$.

\begin{proposition}\label{prop:nulltranscompat}
Let $Z$ be a multimatroid, let $\prec$ be a total ordering of its skew classes and let $T$ be a transversal of $Z$. Then
\[ n(T) = |{\act_{\prec}(\ccl_{\prec}(T))}| - |T\bigtriangleup \ccl_{\prec}(T)|.\]
\end{proposition}

\begin{proof}
Note that $0\leq |{T\bigtriangleup \ccl_{\prec}(T)}| \leq |{\act_{\prec}(\ccl_{\prec}(T))}|$.
Suppose that $T=\ccl_{\prec}(T)$. Then by Proposition~\ref{prop:compatclos}, $T$ is $(Z,\prec)$-compatible. 
We claim that 
\[ \act_{\prec}(T)=\mcs(T,\prec).\] To show this, suppose first that $\omega \in \act_{\prec}(T)$. Then there is a circuit $C$ of $Z$ with $\min(C)\in \omega$ and $C-\omega\subseteq T$. But as $T$ is $\prec$-compatible, $C\subseteq T$ and $\omega\in \mcs(T,\prec)$. Conversely, if $\omega \in\mcs(T,\prec)$, then there is a circuit $C$ with $C\subseteq T$ and $\min (C)\in \omega$, so $\omega \in \act_{\prec}(T)$. 
Thus, by Lemma~\ref{lem:matroidmincircuit}, 
\[ n(T)=|{\act_{\prec}(T)}| = |{\act_{\prec}(\ccl_{\prec}(T))}| - |{T\bigtriangleup \ccl_{\prec}(T)}|.\]

Next suppose that $|{T\bigtriangleup \ccl_{\prec}(T)}|=|{\act_{\prec}(\ccl_{\prec}(T))}|$. Then for every circuit $C$ with $C-\min(C)\subseteq T$, we have $\min(C) \notin T$. Thus $T$ is a basis of $Z$. Hence 
\[ n(T)=0 = |{\act_{\prec}(\ccl_{\prec}(T))}| - |{T\bigtriangleup \ccl_{\prec}(T)}|.\]
So the expression for $n(T)$ holds in the cases when $T=\ccl_{\prec}(T)$ or when 
$|{T\bigtriangleup \ccl_{\prec}(T)}|$ $=$ $|{\act_{\prec}(\ccl_{\prec}(T))}|$.

Axiom~\ref{axiom1} implies that if $T_1$ and $T_2$ are transversals 
differing on at most one skew class, then $|n(T_1)-n(T_2)|\leq 1$. The result now follows.
\end{proof}

It is now easy to obtain an expression for the transition polynomial as a sum over $\prec$-compatible transversals. Given a multimatroid $Z$ with a total ordering $\prec$ of its skew classes, we let $\Comp(Z,\prec)$ denote the collection of $\prec$-compatible transversals of $Z$. 

\begin{theorem}\label{thm:cocomp}
Let $Z$ be a multimatroid and let $\prec$ be a total ordering of its skew classes. Then
\[Q(Z;\vect x,t)= \sum_{T\in \Comp(Z,\prec)}
\Bigg(\prod_{\omega \in \inact_{\prec}(T)} \vect x_{T_{\omega}}\Bigg) 
\Bigg(\prod_{\omega \in \act_{\prec}(T)}
\Bigg(\Big(\sum_{e \in \omega-\{T_{\omega}\}} \vect x_e\Big) + t{\vect x}_{T_{\omega}}\Bigg)\Bigg).\]
\end{theorem}

\begin{proof}
    By partitioning the set of transversals of $Z$ according to their $(Z,\prec)$-canonical compatible transversal, we obtain    \begin{align*}
        Q(Z;\vect x,t) &= \sum_{T'\in \mathcal T(Z)} t^{n(T')} \vect x_{T'}\\
        &= \sum_{T\in \Comp(Z,\prec)} \sum_{T':\ccl_{\prec}(T')=T} \Bigg(\prod_{\omega \in \inact_{\prec}(T)} \vect x_{T'_{\omega}}\Bigg)  \Bigg(\prod_{\omega \in \act_{\prec}(T)} \vect x_{T'_{\omega}}\Bigg) t^{n(T')} \\
        &= \sum_{T\in \Comp(Z,\prec)} \Bigg(\prod_{\omega \in \inact_{\prec}(T)} \vect x_{T_{\omega}}\Bigg) \sum_{T':\ccl_{\prec}(T')=T} \Bigg(\prod_{\omega \in \act_{\prec}(T)} \vect x_{T'_{\omega}}\Bigg) t^{n(T')},
        \end{align*}
    where the final equality follows from Proposition~\ref{prop:compatcriterion}. Moreover it also follows from Proposition~\ref{prop:compatcriterion} that the transversals contributing to the inner sum for a particular $\prec$-compatible transversal $T$ may include any member of each active skew class of $T$. Proposition~\ref{prop:nulltranscompat} tells us that the nullity of such a transversal $T'$ is equal to the number of skew classes which are active with respect to $T$ and on which $T$ and $T'$ agree. Thus
    \[ \sum_{T':\ccl_{\prec}(T')=T} \Bigg(\prod_{\omega \in \act_{\prec}(T)} \vect x_{T'_{\omega}}\Bigg) t^{n(T')} 
    =\Bigg(\prod_{\omega \in \act_{\prec}(T)}
\Bigg(\Big(\sum_{e \in \omega-\{T_{\omega}\}} \vect x_e\Big) + t{\vect x}_{T_{\omega}}\Bigg)\Bigg)\]
    and the result follows.
\end{proof}

Then we obtain the following corollaries. 
The first is immediate. The second comes from the previous theorem and by noting that Proposition~\ref{prop:nulltranscompat} implies that if $T$ is $\prec$-compatible then $n(T)=|{\act_{\prec}(T)}|$.

\begin{corollary}\label{cor:firstcompat}
Let $Z$ be a multimatroid and let $\prec$ be a total ordering of its skew classes. Then the unweighted transition polynomial is given by
\[ Q(Z;t)= \sum_{T\in \Comp(Z,\prec)}  
\Bigg(\prod_{\omega \in \act_{\prec}(T)}
 (|\omega|-1+t)\Bigg).\]
\end{corollary}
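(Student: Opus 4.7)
The plan is to derive the statement as an immediate specialization of Theorem~\ref{thm:cocomp}. By definition, $Q(Z;t)$ is obtained from $Q(Z;\vect x,t)$ by setting $\vect x_u = 1$ for every $u \in U$. Applying this substitution inside the sum on the right-hand side of Theorem~\ref{thm:cocomp}, I observe that for each $T \in \Comp(Z,\prec)$ the factor $\prod_{\omega \in \inact_{\prec}(T)} \vect x_{T_\omega}$ collapses to $1$, while for each $\omega \in \act_{\prec}(T)$ the factor $\sum_{e \in \omega - \{T_\omega\}} \vect x_e + t\,\vect x_{T_\omega}$ simplifies to $(|\omega|-1) + t$. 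Assembling the surviving factors yields the claimed identity.

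As a sanity check, I would verify this formula directly against $Q(Z;t) = \sum_{T' \in \mathcal T(\Omega)} t^{n(T')}$ by grouping transversals according to their $\prec$-cocompatible closure. Fix a cocompatible $T$, so that $n(T) = |{\act_{\prec}(T)}|$ by Proposition~\ref{prop:nulltranscompat}. The transversals $T'$ with $\ccl_{\prec}(T') = T$ are precisely those agreeing with $T$ on every inactive skew class; if such a $T'$ differs from $T$ on a subset $S \subseteq \act_{\prec}(T)$ of skew classes, there are $\prod_{\omega \in S}(|\omega|-1)$ such choices, and Proposition~\ref{prop:nulltranscompat} gives $n(T') = |{\act_{\prec}(T)}| - |S|$. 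Summing $t^{n(T')}$ over such $T'$ recovers $\prod_{\omega \in \act_{\prec}(T)}((|\omega|-1)+t)$, and summing over $T \in \Comp(Z,\prec)$ reproduces the formula. This independent route is essentially the reason Theorem~\ref{thm:cocomp} holds in the first place, so appealing directly to Theorem~\ref{thm:cocomp} is the cleaner proof.

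No step presents a genuine obstacle; the corollary is a rebranding of Theorem~\ref{thm:cocomp} upon evaluating at $\vect x = \vect 1$, with Proposition~\ref{prop:nulltranscompat} providing a combinatorial check that the polynomial degree $|{\act_{\prec}(T)}|$ contributed by each cocompatible $T$ matches the nullity accounting inherent in the definition of $Q(Z;t)$.
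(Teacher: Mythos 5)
Your proposal is correct and matches the paper's derivation: Corollary~\ref{cor:firstcompat} is obtained exactly by evaluating Theorem~\ref{thm:cocomp} at $\vect x=\vect 1$, whereupon each inactive factor is $1$ and each active factor becomes $|\omega|-1+t$. Your sanity check via Proposition~\ref{prop:nulltranscompat}, grouping transversals by their $\prec$-cocompatible closure, is precisely the argument the paper uses to justify Theorem~\ref{thm:cocomp} itself, so nothing new or problematic arises.
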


\begin{corollary}\label{cor:secondcompat}
Let $Z$ be a $q$-matroid and let $\prec$ be a total ordering of its skew classes. Then the unweighted transition polynomial is given by
\begin{equation} \label{eq:compsum} Q(Z;t)= \sum_{T\in \Comp(Z,\prec)}  
 (q-1+t)^{n(T)}.\end{equation}
\end{corollary}

\begin{example}[Examples~\ref{example},~\ref{eg:examplectd.1},~\ref{eg:examplectd.2}, and~\ref{eg:examplectd.3} continued]\label{eg:examplectd.4}
We again define a total ordering $\prec$ on the skew classes of $Z$, the $3$-matroid described in Example~\ref{example}, by requiring that $\omega_a\prec \omega_b\prec \omega_c$.
Recall from Example~\ref{eg:examplectd.3} that the $\prec$-compatible transversals of $Z$ are
$\{ \overline{a}, \overline{b}, \dott{c}\}$,
$\{ \overline{a}, \overline{b}, \overline{c}\}$,
$\{ \dott{a}, \dott{b}, \overline{c}\}$,
$\{ \widehat{a}, \widehat{b}, \overline{c}\}$,
$\{ \overline{a}, \overline{b}, \widehat{c}\}$,
$\{ \widehat{a}, \dott{b}, \widehat{c}\}$,
$\{ \dott{a}, \widehat{b}, \widehat{c}\}$.
We have $n(\{ \overline{a}, \overline{b}, \dott{c}\})=2$, but 
every other $\prec$-compatible transversal has nullity one. It follows from Corollary~\ref{cor:secondcompat} that
\[ Q(Z;t) = (2+t)^2 + 6(2+t) = t^2+10t+16,\]
which agrees with our earlier calculation in Example~\ref{eg:examplectd.1}.
\expleend\end{example}

By substituting $q=2$ and $t=0$ into Equation~\eqref{eq:compsum}, we observe that for a $2$-matroid $Z$, the evaluation $Q(Z;0)$ is equal to the number of $\prec$-compatible transversals of $Z$, for every ordering $\prec$ of the skew classes of $Z$. We noted earlier the number of bases of a non-degenerate multimatroid $Z$ is equal to $Q(Z;0)$, so every $2$-matroid has the same number of bases and $\prec$-compatible transversals, for every ordering $\prec$ of its skew classes. In the next subsection, we shall give a bijective proof of this fact and show that in the special case of a $2$-matroid $Z(M)$ coming from a matroid $M$, the bijection is essentially the same as that
introduced by 
Kochol~\cite{zbMATH07725327} and Pierson~\cite{zbMATH07814958}.

\begin{example}[Examples~\ref{eg:matroid},~\ref{eg:matcont},~\ref{eg:matcont1.5}, and~\ref{eg:matcont2} continued]\label{eg:matcont2.5}
We will deduce Theorem~\ref{thm:Tuttecompat} from Theorem~\ref{thm:cocomp}.
Let $M$ be a matroid with a total ordering $\prec$ of its element set $E$. Then, as we noted in Example~\ref{eg:matcont}, $\prec$ induces an ordering of the skew classes of $Z(M)$ which we also denote by $\prec$. We showed in Example~\ref{eg:matcont2} that a subset $A$ of $E(M)$ is in $\mathcal{D}(M,\prec)$ if and only if $\phi(A) \in \Comp(Z(M),\prec)$.
Suppose then that $A\in \mathcal{D}(M,\prec)$. As $\phi(A)\in \Comp(Z(M),\prec)$, a skew class $\omega_e$ is active with respect to $\phi(A)$ if and only if either $e \in A$ and there is a circuit $C$ of $M$ with $C\subseteq A$ and $e=\min(C)$, or $e\notin A$ and there is a cocircuit $C^*$ of $M$ with $C\subseteq E-A$ and $e=\min (C)$. After letting $\vect x_{\dott e}=u$ and $\vect x_{\overline e}=v$ and performing similar calculations to those in Equation~\eqref{eq:mat2}, the expression in Theorem~\ref{thm:cocomp} becomes
\[ Q(Z(M);\vect x,t) = \sum_{A\in \mathcal{D}(M,\prec)} 
u^{r_M(A)}v^{r_{M^*}(E-A)} (v+tu)^{|A|-r_M(A)} (u+tv)^{|E|-|A|-r_{M^*}(E-A)}.\] 
Using Equation~\eqref{eq:tuttetrans}, and substituting $u=\sqrt y$, $v=\sqrt x$ and $t=\sqrt{xy}$ gives
\begin{align*} \MoveEqLeft T(M;x+1,y+1)
\\&= \sqrt{x}^{\,r_M(E)-|E|} \sqrt{y}^{\,-r_M(E)}
\sum_{A\in \mathcal{D}(M,\prec)} 
\sqrt{y}^{\,r_M(A)}\sqrt{x}^{\,r_{M^*}(E-A)}\\
&\quad\quad \cdot (\sqrt{x}+y\sqrt{x})^{\,|A|-r_M(A)} (\sqrt{y}+x\sqrt{y})^{\,|E|-|A|-r_{M^*}(E-A)} \\
&= \sum_{A\in \mathcal{D}(M,\prec)}
\sqrt{x}^{\,r_M(E)-|E|+r_{M^*}(E-A)+|A|-r_M(A)}
\\
& \quad\quad \cdot
\sqrt{y}^{\,r_M(A) + |E|-|A|-r_{M^*}(E-A)-r_M(E)}\\
& \quad\quad\quad\quad  \cdot (x+1)^{|E|-|A|-r_{M^*}(E-A)}
(y+1)^{|A|-r_M(A)}\\
&= \sum_{A\in \mathcal{D}(M,\prec)} 
(x+1)^{r(E)-r(A)}(y+1)^{|A|-r(A)}.
\end{align*}
Thus we recover Theorem~\ref{thm:Tuttecompat}.
\expleend\end{example}

\subsection{An equivalence relation on transversals}\label{ss:results3}
The proof of Theorem~\ref{thm:cocomp} worked by partitioning the transversals of $Z$ according to their $\prec$-canonical compatible transversal. In the remainder of this section we consider the equivalence relation on the transversals for which this partition is the set of equivalence classes, in particular, its restriction to the set of bases.

Given a multimatroid $Z$ and a total ordering $\prec$ of its skew classes, we define an equivalence relation $\sim$ on $\mathcal{T}(Z)$
so that $T_1 \sim T_2$ if and only if $\ccl_{\prec}(T_1)=\ccl_{\prec}(T_2)$. It is clear that $\sim$ is genuinely an equivalence relation. Let $[T]$ denote the equivalence class of the transversal $T$ according to $\sim$. By Proposition~\ref{prop:compatcriterion} Part~\ref{prop:compatcriterion.1}
\[ [T] = \{ T'\in \mathcal T(Z) : T'_{\omega} = T_\omega, \ \omega\in \inact_{\prec}(\omega)\}.\]
Let $\mathcal T(Z)/{\sim}$ denote the set of equivalence classes of $\mathcal T(Z)$ according to $\sim$. It follows from Proposition~\ref{prop:propsofcompat} Part~\ref{prop:propsofcompat.3} that 
the image of $\ccl_{\prec}$ is $\Comp(Z,\prec)$, so
$|\mathcal T(Z)/{\sim}|=|{\Comp(Z,\prec)}|$.

In the case where $Z$ is non-degenerate, we now relate $\mathcal T(Z)/{\sim}$ to the collection $\mathcal H_{Z,\prec}$ from the previous section.

\begin{proposition}\label{prop:Handcompat}
Let $Z$ be a non-degenerate multimatroid with a total ordering $\prec$ of its skew classes. Then for every $(Z,\prec)$-compatible transversal $T$, we have
\[ \bigcup\limits_{\substack{B\in \mathcal B(Z):\\ \ccl_{\prec}(B)=T}}
H_{Z,\prec}(B) = \{T'\in \mathcal T(Z): \ccl_{\prec}(T')=T\}.\]
\end{proposition}
\begin{proof}
Members of $H_{Z,\prec}(B)$ differ only on active skew classes of $B$. Thus if $T'' \in H_{Z,\prec}(B)$, then by Proposition~\ref{prop:compatcriterion} Part~\ref{prop:propsofcompat.3} we have $\ccl(T'')=\ccl(B)$. This gives
\[ \bigcup\limits_{\substack{B\in \mathcal B(Z):\\ \ccl_{\prec}(B)=T}}
H_{Z,\prec}(B) \subseteq \{T'\in \mathcal T(Z): \ccl_{\prec}(T')=T\}.\]
To prove the reverse inclusion, observe that by Theorem~\ref{thm:maingen2}, for every transversal $T'$ with $\ccl_{\prec}(T')=T$, there is at least one basis $B$ for which $T'\in H_{Z,\prec}(B)$. 
As $B\in H_{Z,\prec}(B)$,
by the first part of the proof $\ccl_{\prec}(B)=\ccl_{\prec}(T')=T$ and the result follows.
\end{proof}

Thus $\mathcal T(Z)/{\sim}$ is closely related to $\mathcal H_{Z,\prec}$.
The advantage of locating transversals around $(Z,\prec)$-compatible transversals rather than bases, as we did in Theorem~\ref{thm:maingen2}, is that we always get a partition of $\mathcal T(Z)$.

Let $Z$ be a non-degenerate multimatroid and let $\prec$ be a total ordering of its skew classes. We now consider the restriction $\sim_{\mathcal B}$ of $\sim$ to $\mathcal B(Z)$. 
For a basis $B$, we define $[B]_{\mathcal{B}}:= \{ B'\in \mathcal B(Z): B'\sim_{\mathcal B} B\}$ and $\mathcal B(Z)/{\sim_{\mathcal{B}}} := \{ [B]_{\mathcal{B}} : B\in\mathcal B(Z)\}$. For each $(Z,\prec)$-compatible transversal $T$, 
we may choose $B$ such that $T\in H_{Z,\prec}(B)$. So Proposition~\ref{prop:Handcompat} implies that there is a basis $B$ such that $\ccl_{\prec}(B)=T$. Consequently $|\mathcal B(Z)/{\sim_{\mathcal B}}| = |\mathcal T(Z)/{\sim}| = |{\Comp(Z,\prec)}|$.

The preceding discussion together with Proposition~\ref{prop:compatcriterion} allows us to rewrite Corollary~\ref{cor:firstcompat} as a sum over the equivalence classes of ${\sim_{\mathcal B}}$.

\begin{corollary}
Let $Z$ be a non-degenerate multimatroid and let $\prec$ be a total ordering of its skew classes. Then the unweighted transition polynomial is given by
\[ Q(Z;t)= \sum_{[B]_{\mathcal B}\in \mathcal B(Z)/\sim_{\mathcal B} }  \: \Bigg( \prod_{\omega \in \act_{\prec}(B)}
 (t+|\omega|-1) \Bigg).\]
Here the sum is over equivalence classes, and the product is over the active skew classes of any equivalence class representative.
\end{corollary}

The next corollary is immediate.
\begin{corollary}\label{cor:equiv}
Let $Z$ be a $q$-matroid with $q\geq 2$ and let $\prec$ be a total ordering of its skew classes. Then the unweighted transition polynomial is given by
\begin{equation} Q(Z;t)= \sum_{[B]_{\mathcal B}\in \mathcal B(Z)/\sim_{\mathcal B}}   (t+q-1)^{|{\act_{\prec}(B)}|}.\label{eq:shwlling2}\end{equation}
Here the sum is over equivalence classes, and the exponent is the number of active skew classes of any equivalence class representative.
\end{corollary}

We now state our fourth main result which gives a succinct description of the members of each equivalence class of $\sim_{\mathcal B}$.
\begin{theorem}\label{thm:equiv}
Let $Z$ be a non-degenerate multimatroid
with a total ordering $\prec$ of its skew classes. Let $B$ be a basis of $Z$. Then a transversal $B'$ is in $[B]_{\mathcal B}$ if and only if $B'$ and $\ccl_{\prec}(B)$ agree on the skew classes which are inactive with respect to $B$ and disagree on the skew classes which are active with respect to $B$.
Moreover,
 \[
      |[B]_{\mathcal{B}}| = \prod_{\omega\in \act_{Z,\prec}(B)}(|\omega|-1).
  \]
\end{theorem}

\begin{proof} 
By Proposition~\ref{prop:compatcriterion} Part~\ref{prop:compatcriterion.1}, a transversal $B'$ satisfies $B'\sim B$ if and only if $B'$ and $B$ agree on the skew classes that are inactive with respect to $B$, which occurs if and only if $B'$ and $\ccl_{\prec}(B)$ agree on these skew classes. 

Now let $B'$ be such a transversal and let $T:=\ccl_{\prec}(B)$. To prove the first part it remains to prove that $B'$ is a basis if and only if   
$B'_{\omega}\ne T_{\omega}$
for every skew class $\omega$ that is active with respect to $B$. By definition, if $\omega$ is active with respect to $B$, we have $T_{\omega}=\underline B_{\omega}$.

%By Proposition~\ref{prop:compatcriterion} Part~\ref{prop:compatcriterion.2}, the same skew classes are active with respect to $B'$ and $B$. 
Let $\omega$ be an active skew class with respect to $B$. Then, from the definition of activity, $Z$ has a circuit $C$ with $C \subseteq B \cup \{\underline B_{\omega}\}$ and $\min(C) = \underline B_{\omega} \in \omega$. Moreover, by Lemma~\ref{lem:compatcruc}, $Z$ has a circuit $C'$ with $C' \subseteq (B\cap B') \cup \omega $ and $\min (C') \in \omega$. 
By Lemma~\ref{lem:compatunique}, $\min(C')=\min(C)=\underline B_{\omega}$.
So we see that if $\underline B_\omega\in B'$ then $B'$ is dependent. 

Now suppose that $B'_{\omega} \ne \underline B_{\omega}$ for every skew class $\omega$ that is active with respect to $B$. Suppose for contradiction that $B'$ is not a basis. Then there must be a circuit $C''$ with $C''\subseteq B'$ and $C''_{\omega'} \ne B_{\omega'}$ for some skew class $\omega'$. By applying the argument used in the previous paragraph with $\omega=\omega'$, we see there is a circuit $C'$ with $C' \subseteq (B\cap B') \cup \omega' $ and $\min (C') = \underline B_{\omega'} \in \omega'$.
But then $C' \cup C''$ contains a unique skew pair $\{\underline B_{\omega'}, C''_{\omega'}\}$, contradicting Proposition~\ref{prop:uniqueskewpair}. So if $B'_{\omega}\ne \underline B_{\omega}$ for every skew class $\omega$ that is active with respect to $B$, then $B'$ is a basis, as required. 

The last part follows immediately.
\end{proof}

In the special case of a $2$-matroid, we get the following.
\begin{corollary}\label{cor:bij2m}
Let $Z$ be a $2$-matroid and let $\prec$ be a total ordering of its skew classes. Then 
every equivalence class of $\sim$ contains exactly one basis.

Moreover the map $f:\mathcal B(Z) \rightarrow \Comp(Z,\prec)$ 
with $B\mapsto \ccl_{\prec}(B)$ is a bijection between the collections of bases and $\prec$-compatible transversals of $Z$.
Explicitly, we have 
\[ f(B) = B \bigtriangleup
\bigcup_{\omega \in \act_{Z,\prec}(B)}  \omega.
\]
\end{corollary}

\begin{proof}
    The first assertion follows immediately by setting $|\omega|=2$ for every skew class $\omega$ in the previous theorem.

        As $|\mathcal B(Z)|= |{\Comp(Z,\prec)}|$, to see that the map $f$ is a bijection from $\mathcal B(Z)$ to $\Comp(Z,\prec)$, it is enough to note that for every basis $B$, $\ccl_{\prec}(B)\in \Comp(Z,\prec)$ by Proposition~\ref{prop:propsofcompat} Part~\ref{prop:propsofcompat.3}, and that $f$ is injective by the first part of the proof.

To see that the bijection $f$ is as described, first note that trivially every basis $B$ satisfies $B\in [B]_{\mathcal B}$. We deduce from the previous theorem that the skew classes on which $B$ and $\ccl_{\prec}(B)$ agree are precisely those which are inactive with respect to $B$. As $Z$ is a $2$-matroid, this is enough to determine $\ccl_{\prec}(B)$.
\end{proof}

\begin{example}  [Examples~\ref{eg:matroid},~\ref{eg:matcont},~\ref{eg:matcont1.5},~\ref{eg:matcont2}, and~\ref{eg:matcont2.5} continued]\label{eg:matcont3}
In this example we will show that the bijection $f$ from Corollary~\ref{cor:bij2m} is equivalent via the bijection $\phi$ to the bijection described by Kochol~\cite{zbMATH07725327} and Pierson~\cite{zbMATH07814958} between the bases of a matroid $M$ and the set $\mathcal D(M,\prec)$, where $\prec$ is a total ordering of the elements of $M$.

For a matroid $M$ with a total ordering $\prec$ of its elements, Kochol and Pierson describe a bijection $g:\mathcal B(M) \rightarrow \mathcal D(M,\prec)$ given by $g(B):=(B-\inta_{\prec}(B)) \cup \exta_{\prec}(B)$. In other words $g$ maps a basis $B$ to its antipodal point in the Boolean interval \mbox{$[ B-\inta_{\prec}(B), B\cup \exta_{\prec}(B)]$}.

We have
\[ \phi(g(B)) = \phi \Big(B\bigtriangleup \bigcup_{\substack{e\in \inta_{\prec}(B) \\ \phantom{e\in}\cup \exta_{\prec}(B)}} \omega_e \Big)
= f(\phi(B)). 
\]
\expleend\end{example}

We close this section by giving a combinatorial interpretation of the coefficients of $Q(Z;t-1)$.
Let $Z=(U,\Omega,r)$ be a $q$-matroid with $q\geq 2$, let $m:=|\Omega|$, and let $\prec$ be a total ordering of its skew classes. 
Then we can rewrite Equation~\eqref{eq:shwlling2} as
\[Q(Z;t)= \sum_{[B]_{\mathcal B}\in \mathcal B(Z)/\sim_{\mathcal B}}   ((t+1)+(q-2))^{|{\act_{\prec}(B)}|} = \sum_{i=0}^m a_i (t+1)^i,\]
for non-negative integers $a_0$, \ldots, $a_m$. By setting $t=0$, we see that  $\sum_{k=0}^{m} a_i=|\mathcal B(Z)|$.
We now give a combinatorial interpretation to the coefficients $a_i$.

For each equivalence class of $\sim_{\mathcal B}$, choose a representative and let $\hat B$ be the equivalence class representative of $B$. 
Finally, let 
$\mathcal{BR}(Z):=\{\hat B: B\in \mathcal B(Z)/{\sim}_{\mathcal B} \}$ be the set of all equivalence class representatives. 

It follows from Theorem~\ref{thm:equiv} that if $B_{\omega} \ne \hat B_{\omega}$ then there are $q-2$ possibilities for $B_{\omega}$. So we have
\begin{align*}
\sum_{[B]_{\mathcal B}\in \mathcal B(Z)/\sim_{\mathcal B}}   ((t+1)+(q-2))^{|{\act_{\prec}(B)}|}
&= 
\sum_{\hat B\in \mathcal{BR}(Z)} 
\sum_{B\in [\hat B]_{\mathcal B}} (t+1)^{|{\act_{\prec}(B)}| - |B\bigtriangleup \hat B|} \\
&= 
\sum_{B\in \mathcal{B}(Z)} (t+1)^{|{\act_{\prec}(B)}| - |B\bigtriangleup \hat B|}.
\end{align*}
Hence
\begin{align*} a_i &= |\{ \mathcal B\in \mathcal B(Z): |{\act_{\prec}(B)}| - |B\bigtriangleup \hat B| = i| \}|
\\&= |\{ \mathcal B\in \mathcal B(Z): 
|B\cap \hat B| = |\Omega|-|{\act_{\prec}(B)}|+i = |{\inact_{\prec}(B)}|+i\}|.\end{align*}

\begin{example}
[Examples~\ref{example},~\ref{eg:examplectd.1},~\ref{eg:examplectd.2},~\ref{eg:examplectd.3}, and~\ref{eg:examplectd.4} continued]
In $Z$, the 3-matroid from Example~\ref{example}, with the total ordering $\prec$ on its skew classes, the equivalence class of $\{\dott a,\dott b,\dott c\}$ also contains $\{\widehat{a},\dott b,\dott c\}$, $\{\dott a,\widehat{b},\dott c\}$ and $\{\widehat{a},\widehat{b},\dott c\}$.
 It contributes one to $a_0$, two to $a_1$ and one to $a_2$.
 Each of the other six equivalence classes contains two bases, and contributes one to both $a_0$ and $a_1$. Thus $a_2=1$, $a_1=8$ and $a_0=7$. We have $a_3=0$, because $|{\inact_{\prec}(B)}|>0$ for every basis $B$. Hence
\[
  Q(Z;t) = (t+1)^2+8(t+1)+7= t^2+10t+16.
\]

Alternatively, one could obtain this expression directly from Corollary~\ref{cor:equiv}. As $\sim_{\mathcal B}$ has one equivalence class whose members all have two active skew classes, and six equivalence classes whose members all have one active skew class, we get
\[  Q(Z;t) = (t+2)^2+6(t+2)= t^2+10t+16.
\]
\expleend\end{example}

\section{Delta-matroids}\label{sec:dm}

In this section we explore the implications of Theorem~\ref{thm:main1} and Corollary~\ref{cor:main2} for delta-matroids and connect our results with those of Morse~\cite{zbMATH07067836}. 
Example~\ref{eg:lasteg} (below) provides a running example for the constructions in this section. 
Delta-matroids and equivalent structures were introduced in the mid 1980s by several authors~\cite{MR904585,zbMATH04070920,zbMATH03985246}, but principally studied by Bouchet.
A delta-matroid is a pair $(E,\mathcal F)$ where $E$ is a finite set and $\mathcal F$ is a non-empty collection of subsets of $E$ satisfying the \emph{symmetric exchange axiom}:
\begin{quote}
  for all triples $(X,Y,u)$ with $X$ and $Y$ in $\mathcal{F}$
  and $u\in X\triangle Y $, there is a $v \in X\triangle Y$ (perhaps
  $u$ itself) such that $X\triangle \{u,v\}$ is in $\mathcal{F}$.
\end{quote}
The elements of $\mathcal F$ are called \emph{feasible sets}.
A matroid (defined in terms of its bases) is precisely a delta-matroid in which all the feasible sets have the same size.

Bouchet established the relationship between $2$-matroids and delta-matroids in~\cite{MR904585}. Describing and using this relationship places a strain on the notation, so to aid the exposition, we say that a $2$-matroid with carrier $(U,\Omega)$ is \emph{indexed} by a finite set $E$ if $\Omega=\{\omega_e:e \in E\}$.
Now suppose that we have a $2$-matroid with carrier $(U,\Omega)$ indexed by $E$. Fix a transversal $T$ of $\Omega$. For a subtransversal $S$ of $\Omega$, we let 
\[ T(S) = \{e \in E: T \cap \omega_e = S\cap \omega_e\}.\]
\begin{theorem}[Bouchet~\cite{MR904585}]\label{thm:equivalence}
Let $Z=(U,\Omega,r)$ be a $2$-matroid indexed by $E$ and let $T$ be a transversal of $\Omega$. Let $\mathcal F := \{T(B):B\in \mathcal B(Z)\}$.
Then $(E,\mathcal F)$ is a delta-matroid.
Moreover, up to isomorphism, every delta-matroid arises in this way.
\end{theorem}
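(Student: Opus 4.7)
My plan is to prove the equivalence in both directions, treating the forward direction in detail and sketching the converse. I first observe that $B \mapsto T(B)$ is a bijection from $\mathcal{B}(Z)$ onto a family of subsets of $E$: since every basis of a non-degenerate $2$-matroid is a transversal, $B$ is recovered from $T(B) \subseteq E$ by setting $B \cap \omega_e = T \cap \omega_e$ when $e \in T(B)$ and equal to the other element of $\omega_e$ otherwise. The key observation driving everything is that for two bases $B_1, B_2$, the symmetric difference $T(B_1) \triangle T(B_2)$ indexes precisely the skew classes on which $B_1$ and $B_2$ disagree.

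To verify the symmetric exchange axiom I would fix bases $B_1, B_2$ with $X := T(B_1)$, $Y := T(B_2)$, take $u \in X \triangle Y$, and write $\omega := \omega_u$. Since $|\omega| = 2$ and $B_1, B_2$ disagree on $\omega$, we have $B_2 \cap \omega = \omega - \{B_1 \cap \omega\}$. I split into two cases. If the fundamental circuit $C(B_1, \omega)$ does not exist, then the transversal $B' := (B_1 - \{B_1 \cap \omega\}) \cup \{B_2 \cap \omega\}$ is independent --- any circuit in $B'$ would lie in $B_1 \cup \omega$ and meet $\omega$, making it the nonexistent $C(B_1, \omega)$ --- so $B'$ is a basis with $T(B') = X \triangle \{u\}$, and $v := u$ works. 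Otherwise $C(B_1, \omega)$ exists and contains $B_2 \cap \omega$ as its unique element of $\omega$; since $C(B_1, \omega)$ is dependent while $B_2$ is independent, some $f \in C(B_1, \omega) - \omega$ must lie in $B_1 - B_2$. Writing $\omega' := \omega_f$, we have $B_1 \cap \omega' = f \neq B_2 \cap \omega'$, so the index $v$ of $\omega'$ lies in $X \triangle Y$. By Lemma~\ref{lem:blah} there exists $x \in \omega' - \{f\}$ such that $B' := B_1 \triangle \{B_1 \cap \omega, B_2 \cap \omega, f, x\}$ is a basis; since $Z$ is a $2$-matroid, $x$ is forced to be $B_2 \cap \omega'$, giving $T(B') = X \triangle \{u, v\} \in \mathcal{F}$, as required.

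For the converse, starting from a delta-matroid $(E, \mathcal{F})$, I would set $\omega_e := \{e_0, e_1\}$ for each $e \in E$, take $T := \{e_0 : e \in E\}$, and define $B_F := \{e_0 : e \in F\} \cup \{e_1 : e \in E - F\}$ for each $F \subseteq E$. The intended collection of bases is $\{B_F : F \in \mathcal{F}\}$, and the rank function on subtransversals is defined by $r(S) := \max_{F \in \mathcal{F}} |B_F \cap S|$; the identity $T(B_F) = F$ is immediate. The hard part will be verifying axioms R1 and R2. For R1 I would use a twisting argument: for a transversal $T'$, the set $A := \{e : T' \cap \omega_e = e_1\}$ gives a twisted delta-matroid $\mathcal{F} \triangle A := \{F \triangle A : F \in \mathcal{F}\}$, and the restriction of $r$ to subsets of $T'$, via the natural identification $T' \leftrightarrow E$, coincides with the rank function of the matroid whose bases are the maximum-cardinality feasible sets of $\mathcal{F} \triangle A$. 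Axiom R2 then follows from the symmetric exchange axiom of delta-matroids applied to feasible sets attaining the maxima in $r(S \cup \{x\})$ and $r(S \cup \{y\})$.
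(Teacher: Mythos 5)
The paper itself does not prove this theorem; it is quoted from Bouchet, so there is no internal argument to compare against, and your proposal has to stand on its own. Its first half does: the observation that $X\bigtriangleup Y$ records exactly the skew classes on which $B_1$ and $B_2$ disagree, the case split on whether $C(B_1,\omega_u)$ exists, and the appeal to Lemma~\ref{lem:blah} (where, in a $2$-matroid, $\omega'-\{f\}$ consists of the single element $B_2\cap\omega'$, so the basis produced is $B_1$ with its elements in $\omega_u$ and $\omega_{v}$ swapped) correctly verify the symmetric exchange axiom. The only omission there is the trivial remark that $\mathcal F\neq\emptyset$, which holds because every $2$-matroid has a basis and, being non-degenerate, its bases are transversals.

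The converse direction, however, contains a genuine gap. Axiom~\ref{axiom2} is fine — indeed easier than you indicate: if $F^*$ attains $r(S)$, then the transversal $B_{F^*}$ contains exactly one of $x,y$, so $r(S\cup\{x\})+r(S\cup\{y\})\geq 2r(S)+1$ with no use of symmetric exchange. The crux is Axiom~\ref{axiom1}, and your treatment of it is an unproved assertion: that $S\mapsto\max_{F\in\mathcal F\bigtriangleup A}|S\cap F|$ is the rank function of the matroid whose bases are the maximum-cardinality members of $\mathcal F\bigtriangleup A$. This presupposes two nontrivial facts: (a) the maximum-cardinality feasible sets of a delta-matroid are the bases of a matroid (the upper matroid $D_{\max}$), and (b) for every $S$ the maximum of $|S\cap F|$ over all feasible $F$ is already attained by some maximum-cardinality feasible set. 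Both are true, but neither follows from anything you have written, and together they are essentially equivalent to what you are trying to prove for the transversal $T'$: granted~\ref{axiom1}, the restriction to $T'$ is a matroid whose bases are precisely the maximum-cardinality members of $\mathcal F\bigtriangleup A$ and whose rank function is the max-intersection function. That the delta-matroid axiom must enter in an essential way is shown by $\mathcal G=\{\{1\},\{2,3\}\}$ (not a delta-matroid): for $\rho(S)=\max_{G\in\mathcal G}|S\cap G|$ one has $\rho(\{1,3\})-\rho(\{1\})=0$ but $\rho(\{1,2,3\})-\rho(\{1,2\})=1$, so $\rho$ is not submodular. To close the gap you must either prove (a) and (b) from the symmetric exchange axiom (a genuine exchange argument, not merely the twisting reduction, which is fine as far as it goes), or verify submodularity of $\max_F|S\cap F|$ directly, or cite Bouchet for the converse as the paper does.
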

We denote the delta-matroid described in the theorem by $D(Z,T)$. 
The process is reversible and every $2$-matroid may be constructed from a delta-matroid by reversing the process. 
Given a delta-matroid $D=(E,\mathcal F)$, let $E_1$ and $E_2$ denote two disjoint copies of $E$. Now let $\mu_1$ and $\mu_2$ be bijections from $E$ to $E_1$ and $E_2$, respectively. Then we obtain a $2$-matroid from $D$ indexed by $E$ with carrier $(E_1\cup E_2, \{\omega_e: e\in E\})$, so that
for all $e$ we have $\omega_e=\{ \mu_1(e), \mu_2(e)\}$,
and collection of bases $\{\mu_1(F) \cup \mu_2(E-F) : F \in \mathcal F\}$. 
Given a delta-matroid $D=(E,\mathcal F)$ we use $Z(D,\mu_1,\mu_2)$ to denote this $2$-matroid. We have $D=D(Z(D,\mu_1,\mu_2),\mu_1(E))$.
As every matroid is a delta-matroid,
the construction we have just described extends the one given in Example~\ref{eg:matroid} where we described how to associate a $2$-matroid with a matroid.

As a consequence, definitions and results for $2$-matroids may be translated to delta-matroids and vice versa. We now effect this translation for some of the concepts we have introduced. 
In order to simplify the notation, when moving from a delta-matroid $(E,\mathcal F)$ to a $2$-matroid, we will take $E_1:=\dott E:=\{\dott e:e\in E\}$, $E_2:=\overline E:=\{\overline e:e\in E\}$, $\mu_1(e)=\dott e$ and $\mu_2(e)=\overline e$. We write $Z(D)$ for $Z(D,\mu_1,\mu_2)$. 
In order to translate between subsets of $E$ (thought of in the delta-matroid $D$) and the corresponding transversal of $Z(D)$, we extend the scope of the map $\phi$ from Example~\ref{eg:matroid} to delta-matroids.
For a subset $A$ of $E$ we let $\phi(A):=\{\dott e : e\in A\}\cup \{\overline e: e\not\in A\}$.

Following~\cite{MR3191496}, for a delta-matroid $D=(E,\mathcal F)$ 
and subset $X$ of $E$, we define 
\[ d(X) := \min_{F\in \mathcal F} |F\bigtriangleup X|.\]
Let $Z=(U,\Omega,r)$ be a multimatroid.
Recall that for a transversal $T$ of $\Omega$, we have $r_Z(T)=\max\{|T\cap B|:B\in \mathcal B(Z)\}$. 
From this it is easy to deduce that $d(X)=n_{Z(D)}(\phi(X))$.

We define the \emph{transition polynomial} of a delta-matroid $D=(E,\mathcal F)$ by
\[Q(D;w,x,t) := \sum_{A \subseteq E} w^{|E-A|}x^{|A|} t^{d(A)}.\]
Observe that the transition polynomial of $D$ can be obtained from the weighted transition polynomial of $Z(D)$ by letting 
$\vect x_{\dott e}=x$ and $\vect x_{\overline e}=w$ for all $e$ in $E$.
This is the polynomial $Q_{w,x,0}(D;t)$ of~\cite{MR3191496}.

Following~\cite{zbMATH07067836}, for a feasible set $F$ and element $e$ of a delta-matroid $D$, we say that $e$ is $F$-\emph{orientable} if $F\bigtriangleup \{e\}$ is not feasible. So $e$ is $F$-orientable if and only if the fundamental circuit $C(\phi(F),\omega_e)$ exists in $Z(D)$. Let $e$ and $f$ be distinct elements of $D$ such that $e$ is $F$-orientable. Then, following~\cite{zbMATH07067836}, we say that $e$ is $F$-\emph{interlaced} with $f$ if $F\bigtriangleup \{e,f\}$ is feasible. (Notice that for $e$ to be $F$-interlaced with $f$, we require $e$ to be $F$-orientable, but there is no such requirement on $f$. In~\cite{zbMATH07067836}, interlacement is described more generally, but we will not need that.)

The next lemma is closely related to Proposition~6.1 of~\cite{MM3zbMATH01648954}.
Its translation to delta-matroids will be useful here and it will itself be useful in the final section.
\begin{lemma}\label{lem:blah}
Let $Z$ be a non-degenerate multimatroid. Suppose that $B$ is a basis of $Z$ and $\omega$ is a skew class such that $C(B,\omega)$ exists. Then an element $f$ of $B-\omega$ is in $C(B,\omega)$ if and only if the skew class containing $f$ has an element $x$ such that 
    $B\bigtriangleup\{B_{\omega}, \underline B_{\omega}, f, x\}$ is a basis of $Z$.
 \end{lemma}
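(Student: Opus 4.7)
The plan is to reduce the problem to a question about a single matroid living on a carefully chosen transversal. Set $T := (B - \{B_\omega\}) \cup \{\underline B_\omega\}$, which is a transversal of $\Omega$. Since any circuit of $Z$ is a subtransversal, $C(B,\omega)$ meets $\omega$ only at $\underline B_\omega$, so $C(B,\omega) \subseteq T$; on the other hand, Proposition~\ref{prop:onlyonecircuit} forces $C(B,\omega)$ to be the only circuit contained in $T$. Because $B - \{B_\omega\} \subseteq T$ is independent of size $|\Omega|-1$, together with $T \supseteq C(B,\omega)$ this gives $r(T) = |\Omega|-1$. Axiom~\ref{axiom1} then supplies a matroid $(T, r|_{2^T})$ of rank $|T|-1$, whose bases are precisely the sets $T - \{g\}$ with $g \in C(B,\omega)$. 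In particular, $f \in C(B,\omega)$ if and only if $T - \{f\}$ is independent in $Z$.

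Next I would rewrite the symmetric difference as $B \bigtriangleup \{B_\omega, \underline B_\omega, f, x\} = (T - \{f\}) \cup \{x\}$, which is a transversal whenever $x$ lies in the skew class $\omega_f$ containing $f$ and $x \neq f$ (such an $x$ exists by non-degeneracy). Hence the lemma reduces to the statement: $T - \{f\}$ is independent in $Z$ if and only if some $x \in \omega_f - \{f\}$ makes $(T - \{f\}) \cup \{x\}$ independent. The backward implication is immediate from heredity of independence.

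For the forward direction, assume $T - \{f\}$ is independent, so $r(T - \{f\}) = |\Omega|-1$. Apply Axiom~\ref{axiom2} to the subtransversal $T - \{f\}$ and the skew pair $\{f, x\} \subseteq \omega_f$ (which is disjoint from $T - \{f\}$): since $(T - \{f\}) \cup \{f\} = T$ also has rank $|\Omega|-1$, the (R2) inequality forces $r((T - \{f\}) \cup \{x\}) \geq |\Omega|$, and so $(T - \{f\}) \cup \{x\}$ is independent. The main obstacle I expect is spotting the right transversal $T$ to invoke (R1), and noticing that its one-shy-of-full rank lets (R2) carry out the remaining work essentially for free; once this observation is in hand, both implications follow almost formally, and the same $x$ analysis shows that in fact \emph{any} $x \in \omega_f - \{f\}$ works.
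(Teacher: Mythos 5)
Your proposal is correct and follows essentially the same route as the paper's proof: both work with the pivoted transversal $B':=B\bigtriangleup\{B_{\omega},\underline B_{\omega}\}$ (your $T$), use Proposition~\ref{prop:onlyonecircuit} to see that $C(B,\omega)$ is the unique circuit it contains (so $f\in C(B,\omega)$ iff $T-\{f\}$ is independent), and then apply axiom~\ref{axiom2} to swap $f$ for any $x$ in its skew class. Your framing of the middle step via the rank-$(|T|-1)$ matroid supplied by~\ref{axiom1} is just a repackaging of the same observation, and like the paper you obtain the stronger conclusion that every $x\in\omega_f-\{f\}$ works.
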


 \begin{proof}
Let $f$ be in $B-\omega$ and let $\omega'$ be the skew class containing $f$. Let $B':=B \bigtriangleup \{B_{\omega}, \underline B_{\omega}\}$. 
Suppose first that $f$ is in $C(B,\omega)$. Then $B'$ contains a unique circuit which includes $f$. So  $B'-\{f\}$ is independent but $B'$ is not. Hence by~\ref{axiom2} for every element $x$ of $\omega'-\{f\}$, $B'\bigtriangleup\{f,x\}$ is a basis of $Z$.

Now suppose that $\omega'$ has an element $x$ such that $B'\bigtriangleup\{f,x\}$ is a basis of $Z$. Then $C(B,\omega)\subseteq B'$, but $B'-\{f\}$ does not contain a circuit. Hence $f\in C(B,\omega)$.
 \end{proof}

\begin{corollary}\label{lem:activities}
Let $D=(E,\mathcal F)$ be a delta-matroid and let $Z:=Z(D)$.
Let $F$ be a feasible set of $D$, and $e$ and $f$ be distinct elements of $D$ so that $e$ is $F$-orientable. Then $e$ is $F$-interlaced with $f$ if and only if 
$C(\phi(F),\omega_e)\cap \omega_f\ne \emptyset$. 
\end{corollary}
\begin{proof}
As $e$ is $F$-orientable, $C(\phi(F),\omega_e)$ exists, so we may apply Lemma~\ref{lem:blah}. 
This implies that $C(\phi(F),\omega_e)$ meets $\omega_f$ 
if and only if $\phi(F) \bigtriangleup \omega_e \bigtriangleup \omega_f$ is a basis of $Z(D)$ which occurs if and only if $F\bigtriangleup \{e,f\}$ is feasible in $D$.
\end{proof}

Now suppose that we have a total ordering $\prec$ of the elements of a delta-matroid $D$. Note that there is a corresponding total ordering of the skew classes of $Z(D)$, which we also denote by $\prec$.
Again, following~\cite{zbMATH07067836}, we say that an element $e$ of $D$ is \emph{active} with respect to a feasible set $F$ of $D$ if $e$ is $F$-orientable and there is no element $f$ with $f\prec e$ such that $e$ is $F$-interlaced with $f$. The next result follows immediately from Lemma~\ref{lem:activities} and completes the translation of $2$-matroid concepts to delta-matroids. 
\begin{corollary}
Let $D$ be a delta-matroid with a total ordering $\prec$ of its skew classes. Let $F$ be a feasible set of $D$. Then element $e$ is active with respect to $F$ in $D$ if and only if the skew class $\omega_e$ is active with respect to $\phi(F)$ in $Z(D)$.
\end{corollary}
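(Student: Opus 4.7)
The plan is to unpack both sides of the equivalence and match them using Lemma~\ref{lem:activities} together with the definition of $F$-orientability. Recall that, by definition, $\omega_e$ is active with respect to $T(F)$ in $Z(D)$ precisely when the fundamental circuit $C(T(F),\omega_e)$ exists \emph{and} its $\prec$-minimum element lies in $\omega_e$; meanwhile, $e$ is active with respect to $F$ in $D$ precisely when $e$ is $F$-orientable \emph{and} there is no $f\prec e$ with which $e$ is $F$-interlaced.

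The first step is to observe that the ``existence'' parts already match: by the definition given just above Lemma~\ref{lem:activities}, $e$ is $F$-orientable if and only if $C(T(F),\omega_e)$ exists in $Z(D)$. Assuming this common existence condition holds, I turn to the ``minimum'' part. Note that $e\in \omega_e\cap C(T(F),\omega_e)$, so the $\prec$-minimum element of $C(T(F),\omega_e)$ lies in $\omega_e$ if and only if no skew class $\omega_f$ with $f\prec e$ meets $C(T(F),\omega_e)$. By Lemma~\ref{lem:activities}, $C(T(F),\omega_e)\cap\omega_f\ne\emptyset$ is equivalent to $e$ being $F$-interlaced with $f$. Therefore the minimum condition on the $Z(D)$-side is equivalent to the non-interlacement condition on the $D$-side.

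Combining the two steps, $\omega_e$ is active with respect to $T(F)$ in $Z(D)$ if and only if $e$ is $F$-orientable and there is no $f\prec e$ with $e$ $F$-interlaced with $f$, i.e., if and only if $e$ is active with respect to $F$ in $D$. There is no real obstacle: the corollary is essentially a dictionary translation, with Lemma~\ref{lem:activities} supplying the only nontrivial bridge.
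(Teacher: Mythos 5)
Your proof is correct and takes essentially the same route as the paper, which simply asserts that the corollary ``follows immediately from Lemma~\ref{lem:activities}''; you have just made that immediate deduction explicit via the two definitions. (One tiny notational slip: the element of $\omega_e$ lying in $C(T(F),\omega_e)$ is $\underline{T(F)}_{\omega_e}$ (one of $\dott e$, $\overline e$), not literally $e$, but your argument only needs that the fundamental circuit meets $\omega_e$, which holds because $T(F)$ is a basis.)
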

If $e$ is active with respect to $F$ and belongs (does not belong) to $F$, then we say that $e$ is \emph{internally active} (\emph{externally active}) with respect to $F$. 
Notice that $e$ is internally active with respect to $F$ 
if and only if $\omega_e$ is active with respect to $\phi(F)$ in $Z(D)$ and $\dott e \in \phi(F)$; similarly, $e$ is externally active with respect to $F$ 
if and only if $\omega_e$ is active with respect to $\phi(F)$ in $Z(D)$ and $\overline e \in \phi(F)$.
We denote the set of internally (externally) active elements with respect to $F$ by $\inta_{\prec}(F)$ ($\exta_{\prec}(F)$).

Having translated the necessary $2$-matroid concepts to delta-matroids, we can obtain Theorem~4.15 of~\cite{zbMATH07067836} as a special case of Theorem~\ref{thm:main1}.
\begin{theorem}\label{thm:delta1}
Let $D=(E,\mathcal F)$ be a delta-matroid and let $\prec$ be a total ordering of $E$. Then
\[ Q(D;w,x,t) =\sum_{F\in \mathcal F} w^{|E|-|F|}x^{|F|} (1+(w/x)t)^{|{\inta_{\prec}(F)}|} (1+(x/w)t)^{|{\exta_{\prec}(F)}|}.\]
\end{theorem}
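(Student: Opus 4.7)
The plan is to obtain Theorem~\ref{thm:delta1} as a direct application of Theorem~\ref{thm:main1} to the $2$-matroid $Z(D)$ associated with $D$, using the dictionary between delta-matroid concepts and $2$-matroid concepts built up in this section. Since $Z(D)$ is non-degenerate (every skew class has size $2$), Theorem~\ref{thm:main1} applies. The total ordering $\prec$ on $E$ induces the corresponding ordering on the skew classes of $Z(D)$, and the bases of $Z(D)$ are precisely the transversals $T(F)$ for $F\in\mathcal F$.

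First I would specialise the weighting: set $\vect x_{\dott e}=x$ and $\vect x_{\overline e}=w$ for every $e\in E$. As already noted in the text, this choice transforms $Q(Z(D);\vect x,t)$ into $Q(D;w,x,t)$. With this weighting, Theorem~\ref{thm:main1} gives
\[ Q(D;w,x,t)=\sum_{F\in\mathcal F}\Bigg(\prod_{\omega\in\inact_{\prec}(T(F))}\!\!\!\vect x_{T(F)_\omega}\Bigg)\Bigg(\prod_{\omega\in\act_{\prec}(T(F))}\!\!\!\bigl(t\vect x_{\underline{T(F)}_\omega}+\vect x_{T(F)_\omega}\bigr)\Bigg),\]
using that $|\omega|-1=1$ for every skew class.

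Next I would evaluate each factor element by element, using the identifications recorded just before the theorem statement. For an inactive skew class $\omega_e$ the factor is $x$ if $e\in F$ and $w$ if $e\notin F$. For an active $\omega_e$ with $e\in F$ (hence $e\in\inta(F)$) we have $T(F)_{\omega_e}=\dott e$ and $\underline{T(F)}_{\omega_e}=\overline e$, giving factor $x+tw$. For an active $\omega_e$ with $e\notin F$ (hence $e\in\exta(F)$) we have $T(F)_{\omega_e}=\overline e$ and $\underline{T(F)}_{\omega_e}=\dott e$, giving factor $w+tx$.

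Finally, pulling an $x$ out of each factor corresponding to an element of $\inta(F)$ and a $w$ out of each factor corresponding to an element of $\exta(F)$, and grouping together the remaining $x$'s coming from $F$ and $w$'s coming from $E-F$, yields the total prefactor $x^{|F|}w^{|E|-|F|}$ together with $(1+(w/x)t)^{|\inta(F)|}(1+(x/w)t)^{|\exta(F)|}$. No step here is really a serious obstacle; the only matter requiring a little care is making sure that the correspondence between active elements and active skew classes, and in particular the designation of internal vs.\ external activity via membership of $\dott e$ or $\overline e$ in $T(F)$, is invoked exactly as set up in the Corollary preceding the theorem.
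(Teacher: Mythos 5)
Your proposal is correct and follows essentially the same route as the paper: apply Theorem~\ref{thm:main1} to the non-degenerate $2$-matroid $Z(D)$ with weights $\vect x_{\dott e}=x$, $\vect x_{\overline e}=w$, translate active skew classes into internally/externally active elements via the preceding corollary, identify $\underline{T(F)}_{\omega_e}$ as the opposite element of the size-two skew class, and regroup the factors. The paper's proof performs exactly this computation, just organised by counting inactive skew classes in $\dott E$ and $\overline E$ rather than element by element.
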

\begin{proof}
Let $Z:=Z(D)$. 
For each $e$ in $E$, let $\vect x_{\dott e}=x$ and $\vect x_{\overline e}=w$.
Let $F$ be feasible in $D$ and let $B:=\phi(F)$. 
Then $|{\inact_{\prec}(B) \cap \dott E}| = |F|-|{\inta_{\prec}(F)}|$
and $|{\inact_{\prec}(B) \cap \overline{E}}| = |E-F|-|{\exta_{\prec}(F)}|$. So we have
\begin{align*}
Q(D;w,x,t)&=Q(Z;\vect x,t)\\ &= \sum_{B\in \mathcal B(Z)} \Bigg(\prod_{\omega \in \inact_{\prec}(B)}\vect x_{B_{\omega}}\Bigg) \Bigg(\prod_{\omega \in \act_{\prec}(B)}
 \Big(\frac {t\vect x_{\underline B_{\omega}}} {|\omega|-1}+\vect x_{B_{\omega}}\Big)\Bigg)\\
&= \sum_{F\in\mathcal F} x^{|{F}|-|{\inta_{\prec}(F)}|} w^{|{E}|-|{F}|-|{\exta_{\prec}(F)}|} (tw+x)^{|{\inta_{\prec}(F)}|}(tx+w)^{|{\exta_{\prec}(F)}|}\\
&= \sum_{F\in\mathcal F} x^{|{F}|} w^{|{E}|-|{F}|} (tw/x+1)^{|{\inta_{\prec}(F)}|}(tx/w+1)^{|{\exta_{\prec}(F)}|}.
\end{align*}
\end{proof}

Our final result of this section is the special case of Corollary~\ref{cor:main2} for delta-matroids. Although this is not explicitly stated in~\cite{zbMATH07067836}, it follows easily by combining Lemmas~4.7, 4.8 and the proof of Corollary~4.14 from the same reference.
Let $D=(E,\mathcal F)$ be a delta-matroid and let $\prec$ be a total ordering of the elements of $E$.
For any feasible set $F$, let $I(F)$ be the Boolean interval given by $I(F):=[F-\inta_{\prec}(F), F\cup \exta_{\prec}(F)]$.

\begin{theorem}\label{thm:delta2}
Let $D$ be a delta-matroid with element set $E$
and let $\prec$ be a total ordering of $E$. Then the
collection of Boolean intervals $\{ I(F): F \in \mathcal F\}$ forms a partition of $2^{E}$.
\end{theorem}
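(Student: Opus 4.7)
The plan is to transfer the statement across the equivalence between delta-matroids and $2$-matroids and then invoke Corollary~\ref{cor:main2} in the special case $q=2$. Let $Z:=Z(D)$, with carrier $(\dott E\cup\overline E,\Omega)$ where $\Omega=\{\omega_e:e\in E\}$, and let $\prec$ also denote the induced total ordering of $\Omega$. The assignment $A\mapsto T(A)$ is a bijection between $2^{E}$ and $\mathcal T(\Omega)$, and it restricts to a bijection between $\mathcal F$ and $\mathcal B(Z)$.

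The heart of the argument is to show that this bijection carries each Boolean interval $I(F)$ onto the set $H_Z(T(F))$ defined in Subsection~\ref{ss:results2}. Fix a feasible set $F$ and set $B:=T(F)$. Using the corollary to Lemma~\ref{lem:activities} already recorded in the excerpt, $\omega_e\in\act_\prec(B)$ if and only if $e$ is active with respect to $F$; moreover $B_{\omega_e}=\dott e$ precisely when $e\in\inta(F)$ and $B_{\omega_e}=\overline e$ precisely when $e\in\exta(F)$, while $\underline B_{\omega_e}$ is in each case the remaining element of $\omega_e$. Hence removing some of the inside-active elements of $B$ and swapping them for the corresponding outside-active elements produces $T(A)$ exactly when $A$ is obtained from $F$ by removing some subset of $\inta(F)$ and adding some subset of $\exta(F)$. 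Unwinding the definition, this is the condition
\[
F-\inta(F)\subseteq A\subseteq F\cup\exta(F),
\]
that is, $A\in I(F)$. Therefore $T$ induces a bijection between $I(F)$ and $H_Z(B)$.

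With this identification in place, apply Corollary~\ref{cor:main2} to the $2$-matroid $Z$: every transversal $T\in\mathcal T(\Omega)$ appears in exactly $(2-1)^{n(T)}=1$ member of $\mathcal H_Z$, so $\{H_Z(B):B\in\mathcal B(Z)\}$ partitions $\mathcal T(\Omega)$. Transporting this partition back across the bijection $A\leftrightarrow T(A)$ yields the desired partition $\{I(F):F\in\mathcal F\}$ of $2^{E}$.

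The one point requiring care is the bookkeeping in the middle step: one must keep straight that ``internal'' and ``external'' for delta-matroids refer to whether the active element sits in $F$ or in $E-F$, and match this with the role played by $B_{\omega_e}$ versus $\underline B_{\omega_e}$ on the $2$-matroid side. Once that dictionary is fixed, the argument is essentially a translation and an application of a single previously proved identity, so I do not expect any serious obstacle beyond this notational alignment.
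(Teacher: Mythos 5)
Your proposal is correct and is essentially the paper's intended argument: the paper presents Theorem~\ref{thm:delta2} precisely as the specialization of Corollary~\ref{cor:main2} to the $2$-matroid $Z(D)$ with $q=2$, using the same dictionary ($A\mapsto T(A)$, active skew classes versus internally/externally active elements, and $I(F)$ corresponding to $H_Z(T(F))$) that you spell out. Your write-up just makes this translation explicit, which matches the paper's route.
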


Every matroid is a delta-matroid and, as pointed out in~\cite[Theorem~3.8]{zbMATH07067836}, the notions of internal and external activities for matroids and delta-matroids are consistent. 
Thus Theorem~\ref{thm:partition} is a special case of Theorem~\ref{thm:delta2}.

\begin{example}\label{eg:lasteg}
Consider the 2-matroid $Z'$ with skew classes $\omega_a=\{\dott a,\overline{a}\}$, $\omega_b=\{\dott b,\overline{b}\}$
and $\omega_c=\{\dott c,\overline{c}\}$, having bases $\{\dott a,\dott b,\dott c\}$,
$\{\dott a, \overline{b}, \overline{c}\}$ and
$\{ \overline{a}, \dott b, \overline{c}\}$.
Thus $Z'$ is indexed by $\{a,b,c\}$. We define a total ordering $\prec$ on the skew classes by requiring that $\omega_a\prec \omega_b\prec \omega_c$.
The circuits of $Z'$ are the subtransversals
$\{ \overline{a}, \overline{b}\}$, $\{ \overline{a}, \dott c\}$, $\{ \overline{b}, \dott c\}$ and
$\{ \dott a, \dott b, \overline{c}\}$. Thus, the basis $\{\dott a, \dott b, \dott c\}$ has $\omega_a$ and $\omega_b$ as active skew classes while
$\{ \dott a, \overline{b}, \overline{c}\}$ and $\{ \overline{a}, \dott b, \overline{c}\}$ have
$\omega_a$ as an active skew class.

The delta-matroid $D:=D(Z',\{\dott a, \dott b, \dott c\})$ has feasible sets
$\{a,b,c\}$, $\{a\}$ and $\{b\}$. 
Moreover, $D$ inherits from $Z'$ the total ordering $\prec$ on its elements given by $a\prec b \prec c$.
By Lemma~\ref{lem:activities}, $a$ and $b$ are internally active in $\{a,b,c\}$ while $a$ is  internally active in  $\{a\}$ and externally active in $\{b\}$. Then,
\begin{align*}
Q(D;w,x,t) &= x^{3} (1+(w/x)t)^{2} +w^{2}x (1+(w/x)t) +w^{2}x  (1+(x/w)t)\\
 &= x^3+3wx^2t+w^2xt^2+2w^2x+w^3t.
\end{align*}

In the 2-matroid $Z'$,
\begin{gather*} H(\{\dott a,\dott  b,\dott  c\}) = \{ \{\dott a, \dott b, \dott c\}, \{\overline{a}, \dott b, \dott c\}, \{\dott a, \overline{b}, \dott c\}, \{\overline{a}, \overline{b}, \dott c\}\},\\
H(\{\dott  a, \overline{b}, \overline{c}\})=\{\{ \dott a, \overline{b}, \overline{c}\},\{ \overline{a}, \overline{b}, \overline{c}\}\} \text{ and } H(\{ \overline{a}, \dott b, \overline{c}\}) = \{\{ \overline{a}, \dott b, \overline{c}\},\{\dott  a,\dott  b, \overline{c}\}\}.\end{gather*}
Thus, in the delta-matroid $D$,
\begin{gather*} I(\{a,b,c\})=\{\{a,b,c\},\{b,c\},\{a,c\},\{c\}\},\\ I(\{a\}) = \{\{a\},\emptyset\} \text{ and } I(\{b\})= \{\{a,b\},\{b\}\}.\end{gather*}
\expleend\end{example}

\section{Ribbon graphs and the topological transition polynomial}\label{sec:newtopo}

In this section we apply our results to topological graph polynomials. The \emph{topological transition polynomial},  introduced in \cite{MR2869185}, is a multivariate polynomial of ribbon graphs (or equivalently, of graphs embedded in surfaces). It contains both the 2-variable version of Bollob\'as and Riordan's ribbon graph polynomial~\cite{MR1851080,MR1906909} and the Penrose polynomial~\cite{MR1428870,MR2994409} as specializations, and is intimately related to  Jaeger's transition polynomial~\cite{MR1096990} and the 
 generalized transition polynomial of 
 \cite{MR1980048}.
Describing the polynomial requires some background on ribbon graphs.  We keep our exposition brief and refer the reader to~\cite{MR3086663,zbMATH07553843} for additional details.

A {\em ribbon graph} $\bG=\left(V,E\right)$ is a surface with boundary, represented as the union of two sets of discs --- a set $V$ of {\em vertices} and a set $E$ of {\em edges} --- such that: (1) the vertices and edges intersect in disjoint line segments; (2) each such line segment lies on the boundary of precisely one vertex and precisely one edge; and (3) every edge contains exactly two such line segments. 
As every ribbon graph $\bG$ can be regarded as a surface with boundary, it has some number of boundary components, which we shall denote by $b(\bG)$. The boundary components will be particularly important in what follows. 
A ribbon graph is said to be a \emph{quasi-tree} if it has exactly one boundary component. We let $k(\bG)$ denote the number of connected components of $\bG$. 
As an example, Figure~\ref{newfex1} shows a ribbon graph with two vertices, three edges and one boundary component. It is a quasi-tree.

Let $\bG=(V,E)$ be a ribbon graph and $e\in E$. Then $\bG\ba e$ denotes the ribbon graph obtained from $\bG$ by \emph{deleting} the edge $e$. For $A\subseteq E$, $\bG\ba A$ is the result of deleting each edge in $A$ (in any order). Ribbon graphs of the form  $\bG\ba A$ for some $A\subseteq E$ are the  \emph{spanning ribbon subgraphs} of $\bG$. 
We use $\bG|_A$ to denote $\bG\ba (E-A)$.

The \emph{partial Petrial} with respect to an edge of a ribbon graph $e$,  introduced in \cite{MR2869185} and denoted here by $\bG^{\tau(e)}$, is, informally, the result  of detaching one end of the edge $e$ from its incident vertex, giving the edge a half-twist, and reattaching it. Formally, it is obtained by  detaching an end of $e$ from its incident vertex $v$ creating arcs $[a,b]$ on $v$, and $[a',b']$ on $e$  (so that $\bG$ is recovered by identifying $[a,b]$ with $[a',b']$), then reattaching the end  by identifying the arcs antipodally (so that the arc $[a,b]$ is identified with the arc $[b',a']$). 
 For $A\subseteq E(G)$,  the partial Petrial $\bG^{\tau(A)}$ is the result of forming the partial Petrial with respect to every element in $A$ (in any order).  
 There is a natural correspondence between the edges of $\bG$ and  $\bG^{\tau(A)}$, and so we can and will assume that they have the same edge set.
 We note that $\bG^{\tau(E)}$ is the Petrie dual, defined in~\cite{MR547621}, of $\bG$.
 Figure~\ref{newfex} shows a ribbon graph and one of its partial Petrials.

\medskip

Let $\bG=(V,E)$ be a ribbon graph, $\boldsymbol\alpha=\{\alpha_e\}_{e\in E}$, $\boldsymbol\beta=\{\beta_e\}_{e\in E}$, $\boldsymbol\gamma=\{\gamma_e\}_{e\in E}$ be indexed families  of indeterminates, and $t$  be another indeterminate. Then the
 \emph{topological transition polynomial}, $Q(\bG; (\boldsymbol\alpha, \boldsymbol\beta, \boldsymbol\gamma) , t)$, can be defined as 
 \[Q(\bG; (\boldsymbol\alpha, \boldsymbol\beta, \boldsymbol\gamma) , t) :=\sum_{(X,Y,Z) \in \mathcal{P}_3(E)}    \Big( \prod_{e\in X}\alpha_e\Big) \Big(  \prod_{e\in Y}  \beta_e\Big) \Big( \prod_{e\in Z}   \gamma_e\Big)  t^{b( \bG^{\tau(Z)}\ba Y)},\]
where $\mathcal{P}_3(E)$ denotes the set of ordered partitions of $E$ into three blocks, which may be empty, and where $b(\bG^{\tau(Z)}\ba Y)$ denotes the number of boundary components of $\bG^{\tau(Z)}\ba Y$.

\medskip

We shall apply Theorem~\ref{thm:main1} to obtain an activities expansion for the topological transition polynomial. This expansion is analogous to  the activities expansion for the Tutte polynomial (Theorem~\ref{thm:tutteact}), and the quasi-tree expansions for the Bollob\'as--Riordan and Krushkal polynomials~\cite{But,MR2854567,CPC,MR2780852}. 

\medskip

For a  ribbon graph $\bG=(V,E)$ we set 
\[\mathcal{Q}(\bG) := \{  (X,Y,Z) \in \mathcal{P}_3(E) :  b(\bG^{\tau(Z)} \ba Y)=k(\bG)  \}.\]
Thus when $(X,Y,Z)\in \mathcal{Q}(\bG)$, the ribbon subgraph $\bG^{\tau(Z)} \ba Y$ comprises a spanning quasi-tree of each component of $\bG^{\tau(Z)}$.

Now let $(X,Y,Z)$ be in $\mathcal{Q}(\bG)$  and consider the spanning ribbon subgraph $\bG^{\tau(Z)}  \!\ba\! Y$ as a subset of the ribbon graph  $\bG^{\tau(Z)}$. 
For each connected component of $\bG$ the corresponding unique boundary component  of $\bG^{\tau(Z)} \ba Y$ defines a curve $\partial$ in $\bG^{\tau(Z)}$ that meets each edge $e$ of the connected component in exactly two arcs $e_1$ and $e_2$. 
If we arbitrarily orient each edge then the arcs $e_1$ and $e_2$ are directed. 
We use these directed arcs to classify the edges of $\bG$ as follows.
Arbitrarily orient the curve $\partial$. 
If the direction of one arc $e_1$ or $e_2$ agrees with  the direction of $\partial$  and the other disagrees then we say that $e$ is \emph{$(X,Y,Z)$-nonorientable}, otherwise it is \emph{$(X,Y,Z)$-orientable}.
Next, if when travelling around $\partial$ two edges $e$ and $f$ of $\bG$ are met in the cyclic order $efef$ then we say that $e$ and $f$ are \emph{$(X,Y,Z)$-interlaced}.

Now suppose that $\bG$ has  a total ordering $\prec$  of its edge set. 
Given $(X,Y,Z)\in \mathcal{Q}(\bG)$ we say that an edge $e$ of $\bG$ is \emph{active} with respect to $(X,Y,Z)$ if it is \emph{not} $(X,Y,Z)$-interlaced  with an edge of lower order, otherwise it is \emph{inactive} with respect to $(X,Y,Z)$. 
An edge $e$ is \emph{active-orientable} with respect to $(X,Y,Z)$ if it is both active with respect to $(X,Y,Z)$ and $(X,Y,Z)$-orientable.
The terms \emph{active-nonorientable}, \emph{inactive-orientable}, and \emph{inactive-nonorientable} are defined similarly.
Furthermore for a subset $S$ of $E$, we let $\IA(S)$, $\AO(S)$ and $\AN(S)$ denote the sets of edges in $S$ which are inactive with respect to $(X,Y,Z)$, active-orientable  with respect to $(X,Y,Z)$, and active-nonorientable with respect to $(X,Y,Z)$, respectively. 
(Although missing from the notation, $(X,Y,Z)$ will always be clear from context.)

\begin{figure}[!t]
    \centering
         \subfloat[$\bG$.]{
  \labellist
\small\hair 2pt
\pinlabel {$a$} at    75 72
\pinlabel {$b$} at     75 26
\pinlabel {$c$} at     11 72 
\endlabellist
 \includegraphics[scale=1]{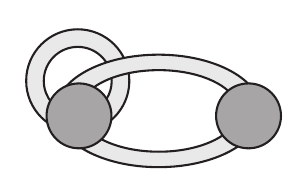} 
        \label{newfex1}}
        \quad
 \subfloat[$\bG^{\tau(a)}$]{
  \labellist
\small\hair 2pt
\pinlabel {$a$} at    75 72
\pinlabel {$b$} at     90 27
\pinlabel {$c$} at     11 72  
\endlabellist
 \includegraphics[scale=1]{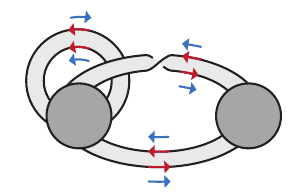} 
        \label{newfex2}}       
     \caption{Examples of ribbon graphs.}
         \label{newfex}
\end{figure}

\begin{theorem}\label{newtt1}
Let $\bG$ be a  ribbon graph  and let 
$\prec$ be a total ordering of its edge set. Then the topological transition polynomial is given by
\begin{multline}\label{thm:newttpexp} Q(\bG; (\boldsymbol\alpha, \boldsymbol\beta, \boldsymbol\gamma) , t)  =    
t^{k(\bG)} \cdot 
\sum_{(X,Y,Z)\in\mathcal{Q}(\bG)}
\:\:
 \prod_{e\in\IA(X)}\alpha_e
 \prod_{e\in\IA(Y)}\beta_e
 \prod_{e\in\IA(Z)}\gamma_e
\\
\prod_{e\in\AO(X)}\left( \frac{t\beta_e}{2}+\alpha_e  \right)
 \prod_{e\in\AO(Y)}\left( \frac{t\alpha_e}{2}+\beta_e  \right)
\prod_{e\in\AO(Z)}\left( \frac{t\beta_e}{2}+\gamma_e  \right)
\\
\prod_{e\in\AN(X)}\left( \frac{t\gamma_e}{2}+\alpha_e  \right)
\prod_{e\in\AN(Y)}\left( \frac{t\gamma_e}{2}+\beta_e  \right)
\prod_{e\in\AN(Z)}\left( \frac{t\alpha_e}{2}+\gamma_e  \right).
\end{multline}
\end{theorem}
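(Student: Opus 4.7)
The plan is to associate a $3$-matroid $Z_\bG$ to $\bG$ and derive the claim from Theorem~\ref{thm:main1}. The carrier of $Z_\bG$ will have one skew class $\omega_e=\{\dott e,\overline e,\widehat e\}$ per edge $e\in E$, with the three elements encoding the choices $e\in X$, $e\in Y$, $e\in Z$ respectively. Thus transversals of $Z_\bG$ are in bijection with ordered partitions $(X,Y,Z)\in\mathcal{P}_3(E)$. The defining property of $Z_\bG$ that I would establish separately (by unwinding the correspondence between spanning quasi-trees of $\bG^{\tau(Z)}\ba Y$ and independent transversals, using standard ribbon-graph techniques) is the nullity identity
\[ n_{Z_\bG}(T) \;=\; b(\bG^{\tau(Z)}\ba Y) \,-\, k(\bG), \]
from which it follows that the bases of $Z_\bG$ are precisely the triples in $\mathcal{Q}(\bG)$.

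Given this, setting $\vect x_{\dott e}=\alpha_e$, $\vect x_{\overline e}=\beta_e$, $\vect x_{\widehat e}=\gamma_e$ immediately yields
\[ Q(\bG;(\boldsymbol\alpha,\boldsymbol\beta,\boldsymbol\gamma),t) \;=\; t^{k(\bG)}\, Q(Z_\bG;\vect x,t). \]
Theorem~\ref{thm:main1} applied to the right-hand side (noting $|\omega_e|-1=2$ throughout) expresses $Q(Z_\bG;\vect x,t)$ as a sum over $\mathcal{Q}(\bG)$ in which each basis $B$ contributes
\[ \Bigl(\prod_{\omega\in\inact_{\prec}(B)}\!\! \vect x_{B_\omega}\Bigr) \Bigl(\prod_{\omega\in\act_{\prec}(B)} \bigl( \tfrac{t\vect x_{\underline B_\omega}}{2} + \vect x_{B_\omega}\bigr)\Bigr). \]
Matching this with~\eqref{thm:newttpexp} then reduces to a dictionary between the $3$-matroid activity structure on $Z_\bG$ and the ribbon-graph activity structure defined in the theorem.

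I would establish this dictionary in two steps. First, the skew class $\omega_e$ is active with respect to the basis $B = B(X,Y,Z)$ in $Z_\bG$ if and only if $e$ is not $(X,Y,Z)$-interlaced with any $\prec$-earlier edge. By Lemma~\ref{lem:blah} the circuit $C(B,\omega_e)$ contains a second skew class $\omega_f$ precisely when there exist elements $f,x\in\omega_f$ such that $B\bigtriangleup\{B_{\omega_e},\underline B_{\omega_e},f,x\}$ is again a basis; transferring this through the bijection with $\mathcal{Q}(\bG)$, and using that toggling the states of $e$ and $f$ preserves the quasi-tree property iff the corresponding arcs along the unique boundary curve $\partial$ of $\bG^{\tau(Z)}\ba Y$ do not alternate in the pattern $efef$, identifies this with non-interlacement. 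Second, when $\omega_e$ is active, the identity of $\underline B_{\omega_e}$ is governed by whether $e$ is $(X,Y,Z)$-orientable: case analysis on the value of $B_{\omega_e}\in\{\dott e,\overline e,\widehat e\}$ combined with the orientability type will produce exactly the nine factors appearing in~\eqref{thm:newttpexp}.

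The main obstacle is the second step, in particular the asymmetry built into the statement (for instance, $\AO(Z)$ gives a factor with $\beta_e$ whereas $\AN(Z)$ gives one with $\alpha_e$). Geometrically this reflects how the half-twist introduced when $e\in Z$ interacts with the boundary curve $\partial$: depending on whether the two arcs of $e$ on $\partial$ agree or disagree with its orientation, the unique admissible single-state swap at $e$ sends it either to $Y$ or to $X$. I would verify this by a local topological calculation, tracking how $\partial$ reroutes when a single edge changes state and confirming that the resulting subgraph is again a spanning quasi-tree in exactly the $(X,Y,Z)$-orientability cases predicted by the theorem. Once this correspondence is in place, the contribution of each basis $B$ to $Q(Z_\bG;\vect x,t)$ under Theorem~\ref{thm:main1} matches factor by factor with the corresponding summand in~\eqref{thm:newttpexp}, completing the proof.
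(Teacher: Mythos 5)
Your overall architecture is the same as the paper's: build a $3$-matroid $Z(\bG)$ whose bases are exactly the triples in $\mathcal{Q}(\bG)$, relate $Q(\bG;(\boldsymbol\alpha,\boldsymbol\beta,\boldsymbol\gamma),t)=t^{k(\bG)}Q(Z(\bG);\vect x,t)$ under $\alpha_e=\vect x_{\dott e}$, $\beta_e=\vect x_{\overline e}$, $\gamma_e=\vect x_{\widehat e}$, apply Theorem~\ref{thm:main1}, and translate the multimatroid activity data via interlacement (through Lemma~\ref{lem:blah}) and orientability. The one routing difference is minor: the paper proves the identity $Q(\bG)=t^{k(\bG)}Q(Z(\bG))$ by matching the deletion--contraction--Petrial recursion (Theorem~\ref{thm:new2trans}, via Proposition~\ref{prop:newsing} and Lemma~\ref{lem:newdcdc}), whereas you propose the term-by-term nullity identity $n(T(X,Y,Z))=b(\bG^{\tau(Z)}\ba Y)-k(\bG)$, which is true and gives the same thing. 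Note, however, that your version quietly presupposes that $Z_\bG$ is a multimatroid at all; in the paper this is Theorem~\ref{thm:newrg3}, resting on Brijder--Hoogeboom's results (vf-safety of ribbon-graphic delta-matroids, and that $Z_3(D)$ is a tight multimatroid), which is more than ``standard ribbon-graph techniques'' hand over for free, so you should cite or prove it.

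There are two concrete problems in your dictionary step. First, the topological equivalence you state is reversed: you claim that toggling the states of $e$ and $f$ preserves the quasi-tree property iff the arcs of $e$ and $f$ along $\partial$ do \emph{not} alternate as $efef$. The correct statement (and the content of Lemma~\ref{lem:activeinterlace}) is the opposite: moving $e$ to its circuit state $\underline B_{\omega_e}$ splits $\partial$ into two curves $\partial'$ and $\partial''$, and a compensating state change at $f$ restores a spanning quasi-tree precisely when $f$ meets both $\partial'$ and $\partial''$, i.e.\ precisely when $e$ and $f$ \emph{are} interlaced. Combined with Lemma~\ref{lem:blah} this yields ``$C(B,\omega_e)$ meets $\omega_f$ iff $e$ and $f$ are $(X,Y,Z)$-interlaced'', which is what you need; as written, your equivalence would give the negation and contradicts the conclusion ``$\omega_e$ active iff $e$ is not interlaced with a $\prec$-earlier edge'' that you then assert. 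Second, that conclusion also requires the fundamental circuit $C(B,\omega_e)$ to exist for \emph{every} edge $e$ and basis $B$ (the multimatroid definition of activity demands it; the ribbon-graph definition does not mention it), and your ``unique admissible single-state swap'' in the orientability step likewise presupposes that exactly one of the two alternative states of $e$ destroys the quasi-tree. Both facts are exactly tightness of $Z(\bG)$, which the paper imports from Brijder--Hoogeboom and uses explicitly in Lemmas~\ref{lem:activeinterlace} and~\ref{lem:whichisthebadguy}; your sketch neither cites nor proves it, so you must either invoke tightness or establish these boundary-rerouting facts directly as part of your local topological calculation.
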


\begin{example}
Let $\bG$ be the ribbon graph shown in Figure~\ref{newfex1}  and let $a\prec b \prec c$. 
Then $(\{b,c\},\emptyset, \{a\}) \in \mathcal{Q}(\bG)$. Figure~\ref{newfex2} shows $ \bG^{\tau(a)}\ba \emptyset $. 
In the figure the blue arrows (drawn off the ribbon graph) indicate the directed arcs arising from an arbitrary edge orientation and the red arrows (drawn on the ribbon graph) indicate an orientation of $\partial$.
We see that $a$ and $c$ are $(\{b,c\},\emptyset, \{a\})$-interlaced, as are $b$ and $c$, but $a$ and $b$ are not. Furthermore, edges $a$ and $b$ are $(\{b,c\},\emptyset, \{a\})$-orientable while $c$ is $(\{b,c\},\emptyset, \{a\})$-nonorientable. Thus we see that $a$ and $b$ are active-orientable  and $c$ is inactive with respect to $(\{b,c\},\emptyset, \{a\})$, resulting in the summand
$\alpha_c(t\beta_a/2+\gamma_a)(t\beta_b/2+\alpha_b)$. 
\expleend\end{example}

We remark that in Equation~\eqref{thm:newttpexp}, the reader may expect all of $\frac{t\alpha_e}{2}$, $\frac{t\beta_e}{2}$, and $\frac{t\gamma_e}{2}$ to appear in both the second and third lines.
The  apparent asymmetry in the expansion results from our use of the standard definition of $(X,Y,Z)$-orientable. It is possible to use an alternative notation to eliminate the asymmetry.

We shall deduce Theorem~\ref{newtt1} from Theorem~\ref{thm:main1} by describing the construction of a $3$-matroid from a ribbon graph.   
Before doing so, we recall a few facts concerning delta-matroids associated with ribbon graphs and a construction due to Brijder and Hoogeboom which associates a uniquely defined $3$-matroid with each delta-matroid from a certain subclass. Let $\bG=(V,E)$ be a ribbon graph. Bouchet proved in~\cite{zbMATH04185622} that the pair $(E,\mathcal F)$, where 
\[ \mathcal F= \{A\subseteq E: b(\bG |_A) = k(\bG)\},\]
is a delta-matroid. We denote this delta-matroid by $D(\bG)$. 
A delta-matroid arising in this way is said to be \emph{ribbon-graphic}.
(Note that Bouchet's proof uses the language of combinatorial maps rather than ribbon graphs. For a proof in the ribbon graph setting see~\cite{CMNR1}.)

Brijder and Hoogeboom~\cite{zbMATH05982480} defined a class of delta-matroids they called \emph{vf-safe} delta-matroids. The definition of vf-safe is not needed here, but by combining results of Bouchet~\cite{zbMATH04162893,zbMATH04185622} and Brijder and Hoogeboom~\cite[Theorem~8.2]{zbMATH06181926} it may be shown that every ribbon-graphic delta-matroid is vf-safe. (See also~\cite[Theorem~5.44]{CMNR1}). 
Brijder and Hoogeboom~\cite{zbMATH05982480} define the operation of \emph{loop-complementation} with respect to a subset of the elements of a delta-matroid  (again we omit the definition as it is not needed here) and show that for vf-safe delta-matroids 
loop complementation always preserves vf-safeness. Given a delta-matroid $D$ and subset $A$ of its elements, we denote the loop-complementation with respect to $A$ by $D+A$. In~\cite[Theorem~4.1]{CMNR2}, it is shown that if $\bG$ is a ribbon graph, then $D(\bG^{\tau(A)})=D(\bG)+A$.

Given a vf-safe delta-matroid $D=(E,\Omega)$, Brijder and Hoogeboom~\cite{MR3191496} describe the construction of a $3$-matroid $Z_3(D)$.
Let $E_1$, $E_2$ and $E_3$ be disjoint copies of $E$, and let $\mu_1$, $\mu_2$ and $\mu_3$ be bijections from $E$ to $E_1$, $E_2$ and $E_3$. Then we obtain a $3$-matroid from $D$ indexed by $E$ with carrier $(E_1\cup E_2\cup E_3, \{\omega_e: e\in E\})$, so that
for all $e$ we have $\omega_e=\{ \mu_1(e), \mu_2(e), \mu_3(e)\}$. 
For each $(X,Y,Z)$ in $\mathcal P_3(E)$, we let 
\[T(X,Y,Z):= \mu_1(X) \cup \mu_2(Y) \cup \mu_3(Z).\]
The transversal $T(X,Y,Z)$ is a basis of $Z_3(D)$ if and only $X\cup Z$ is feasible in $D+Z$. Brijder and Hoogeboom showed in~\cite[Theorem~16]{MR3191496} that $Z_3(D)$ is genuinely a multimatroid.

The $3$-matroid $Z_3(D)$ has an important additional property. Following Bouchet \cite{MM3zbMATH01648954}, we say that a multimatroid $Z=(U,\Omega,r)$ is \emph{tight} if it is non-degenerate and for every subtransversal $S$ with $|S|=|\Omega|-1$,
 there is an element $x$ of the skew class not meeting $S$ such that $r(S\cup\{x\})=r(S)$. Equivalently,~\cite[{Theorem~4.2}]{MM3zbMATH01648954}, a multimatroid is tight if and only if it is non-degenerate and the union of a basis and a skew class always contains a circuit.
Brijder and Hoogeboom~\cite{MR3191496} show that the $Z_3(D)$ is always tight.

We now describe how a $3$-matroid $(U(\bG), E, r)$  can be obtained from a  ribbon graph $\bG=(V,E)$. 
As we mentioned in Section~\ref{sec:multim} when we introduced multimatroids, it is often useful to think of the elements of a skew class as representing different `states' that the skew class might play. This is particularly pertinent for the $3$-matroid associated with a ribbon graph and underlies our choice of notation.
Let $U(\bG):=\{\dott e, \overline e, \widehat e: e\in E\}$, 
 $\Omega:=\{\omega_e:e \in E\}$ and $\omega_e:=\{\dott e, \overline e, \widehat e\}$ for each $e$ in $E$. 
For each $(X,Y,Z)$ in $\mathcal P_3(E)$, we let 
\[T(X,Y,Z):= \{   \dott{e} : e\in X  \}\cup   \{   \overline{e} : e\in Y  \} \cup \{   \widehat{e} : e\in Z  \}.\]
The next result enables us to choose the set of bases.

\begin{theorem}\label{thm:newrg3}
Let $\bG=(V,E)$ be a  ribbon graph. Then the set
\[  \mathcal{B}(\bG):= \{   T(X,Y,Z) :  (X,Y,Z)\in \mathcal{Q}(\bG) \} \]
forms the collection of bases of a tight 3-matroid $Z$ with carrier $(U(\bG) ,\Omega)$. 
\end{theorem}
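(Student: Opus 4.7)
The plan is to identify the desired $3$-matroid $Z$ with $Z_3(D(\bG))$, the $3$-matroid that Brijder and Hoogeboom associate to the delta-matroid $D(\bG)$ of the ribbon graph. Since the excerpt already records that $D(\bG)$ is a ribbon-graphic delta-matroid and hence vf-safe, that $Z_3$ is defined on vf-safe delta-matroids, and that every such $Z_3(D)$ is a tight $3$-matroid, it suffices to check that the collection $\mathcal B(\bG)$ of transversals defined in the statement coincides with the basis set of $Z_3(D(\bG))$.

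First I would align the indexing conventions: $Z_3(D(\bG))$ is built on three disjoint copies $E_1,E_2,E_3$ of $E$ with skew classes $\omega_e=\{\mu_1(e),\mu_2(e),\mu_3(e)\}$, and we simply take the bijections $\mu_1,\mu_2,\mu_3$ to send $e$ to $\dott e,\overline e,\widehat e$ respectively. Under this identification the carrier of $Z_3(D(\bG))$ is exactly $(U(\bG),\Omega)$, and for each ordered partition $(X,Y,Z)\in\mathcal P_3(E)$ the transversal $T(X,Y,Z)$ defined via $\mu_1,\mu_2,\mu_3$ matches the one defined in the statement.

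Next I would translate the $Z_3$-basis criterion into a topological statement. By Brijder and Hoogeboom's characterization recalled just before the theorem, $T(X,Y,Z)$ is a basis of $Z_3(D(\bG))$ if and only if $X\cup Z$ is feasible in the delta-matroid $D(\bG)+Z$. Using the identity $D(\bG)+Z=D(\bG^{\tau(Z)})$ from~\cite[Theorem~4.1]{CMNR2}, this is equivalent to $X\cup Z$ being feasible in $D(\bG^{\tau(Z)})$, i.e.\ $b(\bG^{\tau(Z)}|_{X\cup Z})=k(\bG^{\tau(Z)})$. Since $(X,Y,Z)$ is a partition of $E$, we have $\bG^{\tau(Z)}|_{X\cup Z}=\bG^{\tau(Z)}\ba Y$, and partial Petrials leave the number of connected components unchanged, so $k(\bG^{\tau(Z)})=k(\bG)$. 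Thus the basis condition becomes $b(\bG^{\tau(Z)}\ba Y)=k(\bG)$, which is exactly the defining condition for $(X,Y,Z)\in\mathcal Q(\bG)$. Hence $\mathcal B(\bG)$ equals the collection of bases of $Z_3(D(\bG))$.

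Finally, since $Z_3(D)$ is a tight $3$-matroid whenever $D$ is vf-safe, and $D(\bG)$ is vf-safe, the resulting multimatroid $Z$ is a tight $3$-matroid on $(U(\bG),\Omega)$ with basis set $\mathcal B(\bG)$, as required. The only genuine content is therefore the topological identification $\bG^{\tau(Z)}|_{X\cup Z}=\bG^{\tau(Z)}\ba Y$ together with the invariance $k(\bG^{\tau(Z)})=k(\bG)$; both are routine to verify directly from the definitions of partial Petrial and of spanning ribbon subgraph, and the rest of the argument is a bookkeeping reduction to results already established in the cited literature. The only potential obstacle is making sure the conventions for $\mathcal P_3(E)$ and for $T(X,Y,Z)$ are compatible with those used by Brijder and Hoogeboom, but since both constructions are parametrised by ordered tripartitions of $E$ the match is immediate under the chosen bijections.
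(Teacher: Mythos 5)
Your proposal is correct and follows essentially the same route as the paper: both identify $\mathcal B(\bG)$ with the basis set of $Z_3(D(\bG))$ via the chain $(X,Y,Z)\in\mathcal Q(\bG)$ iff $b(\bG^{\tau(Z)}|_{X\cup Z})=k(\bG)$ iff $X\cup Z$ is feasible in $D(\bG^{\tau(Z)})=D(\bG)+Z$, and then invoke tightness of $Z_3$ for the vf-safe delta-matroid $D(\bG)$. Your explicit remarks that $\bG^{\tau(Z)}|_{X\cup Z}=\bG^{\tau(Z)}\ba Y$ and $k(\bG^{\tau(Z)})=k(\bG)$ merely make precise steps the paper leaves implicit.
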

\begin{proof} 
Given a ribbon graph $\bG$, we may form its ribbon-graphic delta-matroid $D(\bG)$ and then the $3$-matroid $Z_3(D(\bG))$ as described just before the theorem with $\mu_1(e)=\dott{e}$, $\mu_2(e)=\overline{e}$ and $\mu_3(e)=\widehat{e}$. We shall show that  $\mathcal{B}(\bG)=\mathcal B(Z_3(D(\bG)))$.

A set $T(X,Y,Z)$ is in $\mathcal{B}(\bG)$ if and only if $(X,Y,Z) \in \mathcal Q(\bG)$. For a triple $(X,Y,Z)\in {\mathcal P}_3(E)$,
this occurs if and only if  
$b(\bG^{\tau(Z)}|_{X\cup Z}) = k(\bG)$ 
which happens if and only if $X\cup Z$ is a feasible set of $D(\bG^{\tau (Z)})$. By the remarks before the theorem this is equivalent to $X\cup Z$ being feasible in $D(\bG)+Z$. Finally, this occurs if and only if $T(X,Y,Z) \in  \mathcal B(Z_3(D(\bG)))$.
\end{proof}
The above proof motivated a short proof of Theorem~\ref{thm:newrg3} not passing through delta-matroids which can be found in~\cite{me}.

We denote the $3$-matroid with basis set $\mathcal B(\bG)$ and carrier $(U(\bG),\Omega)$ by $Z(\bG)$.
As an example, it can be checked that when $\bG$ is the ribbon graph in Figure~\ref{newfex1}, then $Z(\bG)$ is the multimatroid from Example~\ref{example}.

\medskip

We need to recognise which edges of $\bG$ give rise to  singular skew classes of $Z(\bG)$. For this we need some additional ribbon graph terminology.
An edge $e$ of a ribbon graph $\bG$ is a \emph{bridge} if $k(\bG \ba e)>k(\bG)$. It is a \emph{loop} if it is incident to exactly one vertex. A loop $e$ is \emph{orientable} if $e$ together with its incident vertex forms an annulus, and is \emph{nonorientable} if it forms a m\"obius band. A loop $e$ is said to be \emph{interlaced} with a cycle $C$ if when travelling around the boundary of the vertex incident to $e$ we see edges in the cyclic order $e \, c_1\, e \,c_2$ where $c_1$ and $c_2$ are edges of $C$, these edges will coincide in the case that $C$ is a loop.
A loop is \emph{trivial} if it is not interlaced with any cycle. 
Observe that a bridge necessarily intersects one boundary component of $\bG$, and a trivial orientable loop must meet two boundary components.

\begin{proposition}\label{prop:newsing}
Let $\bG=(V,E)$ be a ribbon graph.  
Then a skew class $\omega_e=\{\dott{e},\bar{e},\hat{e}\}$ of $Z(\bG)$ is singular if and only if $e$ is either a 
bridge or a trivial loop in $\bG$.

Furthermore, if $e$ is a bridge then $\bar{e}$ is a singular element, if $e$ is a trivial orientable loop then $\dott{e}$ is a singular element, and if $e$ is a trivial nonorientable loop then $\hat{e}$ is a singular element. 
\end{proposition}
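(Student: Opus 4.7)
The plan is to show directly that each of $\dott e$, $\overline e$, $\widehat e$ fails to belong to any basis of $Z(\bG)$ exactly in the stated case; the first claim of the proposition then follows because a skew class is singular iff it contains a singular element. Recall that an element $u$ is singular iff $r(\{u\})=0$, which by the formula $r(\{u\})=\max_{B\in\mathcal B(Z)}|B\cap\{u\}|$ is equivalent to $u$ lying in no basis. By Theorem~\ref{thm:newrg3} the bases of $Z(\bG)$ are the transversals $T(X,Y,Z)$ with $(X,Y,Z)\in\mathcal Q(\bG)$, so $\dott e$ (respectively $\overline e$, $\widehat e$) lies in some basis iff there is a triple $(X,Y,Z)$ with $e\in X$ (respectively $e\in Y$, $e\in Z$) such that $X\cup Z$ is feasible in $D(\bG^{\tau(Z)})$.

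The argument relies on two ingredients. First, for the ribbon-graphic delta-matroid $D(\bG)$, an edge $e$ is a delta-matroid loop (in no feasible set) iff $e$ is a trivial orientable loop of $\bG$, and $e$ is a delta-matroid coloop (in every feasible set) iff $e$ is a bridge; this is established for example in~\cite{CMNR1}. Second, a partial Petrial at an edge $f$ only changes the half-twist on $f$ itself, preserving the cyclic order of edges around every vertex. As a consequence, for any $Z\subseteq E$, the properties ``$e$ is a bridge'', ``$e$ is a loop'' and ``$e$ is a trivial loop'' are the same in $\bG$ as in $\bG^{\tau(Z)}$, while the orientability of a loop $e$ is preserved when $e\notin Z$ and flipped when $e\in Z$.

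With these in hand, the three cases are short. For $\dott e$: taking $Z=\emptyset$, $\dott e$ lies in some basis iff $e$ lies in a feasible set of $D(\bG)$, which by the first ingredient fails iff $e$ is a trivial orientable loop; the second ingredient shows no other $Z\not\ni e$ gives a new basis containing $\dott e$. For $\overline e$: again take $Z=\emptyset$; then $\overline e$ is in some basis iff $e$ is not a coloop of $D(\bG)$, i.e., not a bridge, and bridges remain coloops in every $D(\bG^{\tau(Z)})$ with $e\notin Z$ by the second ingredient. For $\widehat e$: take $Z=\{e\}$, so the question becomes whether $e$ lies in a feasible set of $D(\bG^{\tau(e)})$; the second ingredient says $\bG^{\tau(e)}$ swaps trivial orientable and trivial nonorientable loops, so by the first ingredient $e$ is a delta-matroid loop of $D(\bG^{\tau(e)})$ iff $e$ is a trivial nonorientable loop of $\bG$. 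Once more, the second ingredient ensures no larger $Z\ni e$ alters the conclusion.

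Combining the three cases gives the refined second assertion, and hence the first, completing the proof. The main technical point to pin down is the locality statement in the second ingredient---that partial Petrial at $f$ modifies $f$ alone and leaves the local structure at every vertex around every other edge intact---since everything else reduces to case checking using standard facts about ribbon-graphic delta-matroids.
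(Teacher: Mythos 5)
Your proposal is correct, but it takes a noticeably different route from the paper's own proof. You reduce everything to delta-matroid language: using the correspondence $T(X,Y,Z)\in\mathcal B(Z(\bG))$ iff $X\cup Z$ is feasible in $D(\bG^{\tau(Z)})=D(\bG)+Z$, you invoke the known characterization from~\cite{CMNR1} that loops of $D(\bG)$ are exactly the trivial orientable loops and coloops are exactly the bridges, and then transfer these facts to every $D(\bG^{\tau(Z)})$ via the locality of partial Petrial (which, as you note, preserves bridges, loops, triviality and the cyclic order of edge ends, and toggles orientability of a loop only when the Petrial is taken at that loop). That locality claim is indeed the only technical point to check, and it does follow directly from the arc-identification definition of $\tau$; your case analysis for $\dott e$, $\overline e$, $\widehat e$ (with $Z=\emptyset$, $Z=\emptyset$, $Z=\{e\}$ respectively as witnesses, and the invariance argument ruling out all other $Z$) is sound, and it even yields the slightly stronger ``iff'' form of the second assertion. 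The paper instead stays self-contained at the level of $\mathcal Q(\bG)$: for the singular direction it argues directly with boundary components (a bridge meets one boundary component, a trivial orientable loop meets two, so the relevant element can never lie in a quasi-tree triple), and for the non-singular direction it exhibits explicit spanning forests and quasi-trees witnessing that each of $\dott e$, $\overline e$, $\widehat e$ lies in some basis when $e$ is neither a bridge nor a trivial loop. Your approach buys brevity and avoids those explicit constructions, at the price of importing the loop/coloop facts for ribbon-graphic delta-matroids from the literature (which is precisely the content the paper reproves from scratch); the paper's approach buys self-containedness and concrete basis witnesses, at the price of a longer case analysis.
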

\begin{proof}
We first prove the second part and as a consequence one direction of the first part.

Suppose that $(X,Y,Z)\in\mathcal{Q}(\bG)$. 
Consider first the case where $e$ is a trivial orientable loop. If $e\in X$, then $e$ is also a trivial orientable loop in $\bG^{\tau(Z)}\ba Y$ and because 
a trivial orientable loop necessarily meets two distinct boundary components it follows that
$b(\bG^{\tau(Z)}\ba Y) >k(\bG)$. Thus $e\notin X$ and 
$\dott{e}$ is not in any basis of $Z(\bG)$.
Next consider the case where $e$ is a trivial nonorientable loop. 
If $e\in Z$, then $e$ is  a trivial orientable loop in $\bG^{\tau(Z)}\ba Y$ and by the same argument as in the previous case it follows that
$b(\bG^{\tau(Z)}\ba Y) >k(\bG)$. Thus $e\notin Z$ and 
$\widehat{e}$ is not in any basis of $Z(\bG)$.

Finally consider the case where $e$ is a bridge of $\bG$. If $e\in Y$ then $b(\bG^{\tau(Z)}\ba Y) >k(\bG)$, so $e\notin Y$ and $\bar{e}$ is not in any basis of $Z(\bG)$.
Thus if $e$ is  a 
bridge or trivial loop in $\bG$ then the skew class $\omega_e=\{\dott{e},\bar{e},\hat{e}\}$ is singular.

\smallskip

It remains to prove that if $e$ is neither a trivial loop nor a bridge, then the skew class $\omega_e$ is non-singular in $Z(\mathbb G)$.

If $e$ is a non-loop, non-bridge edge, then let $F_1$ be the edge set of a maximal spanning forest of $\bG$ that contains $e$, and let $F_2$ be the edge set of a maximal spanning forest of $\bG$ that does not contain $e$. (Both $F_1$ and $F_2$ exist.) 
Then clearly $(F_1,E-F_1,\emptyset), (\emptyset,E-F_1,F_1), (F_2,E-F_2,\emptyset) \in \mathcal{Q}(\bG)$. It follows that $\dott{e}\in T(F_1,E-F_1,\emptyset)$, $\bar{e}\in T(F_2,E-F_2,\emptyset)$, and $\hat{e}\in T(\emptyset,E-F_1,F_1)$, so each of $\dott{e}$, $\bar{e}$, and $\hat{e}$ appears in some basis of $Z(\bG)$.

Next suppose $e$ is a non-trivial loop.
Let $C$ be the edge set of a cycle of $\bG$ which is interlaced with $e$, and let $f$ be an edge in $C$. Now let $F$ be the edge set of a maximal spanning forest of $\bG$ containing neither $e$ nor $f$ but containing the other edges of $C$. (Such a spanning forest must exist.)
If $e$ is an orientable loop then clearly $(F,E-F,\emptyset ), (F\cup\{f,e\},E-(F\cup\{f,e\}),\emptyset ), (F,E-(F\cup\{e\}), \{e\}) \in \mathcal{Q}(\bG)$, showing, respectively, that each of 
$\bar{e}$, $\dott{e}$, $\hat{e}$ is in some basis of $Z(\bG)$.
Finally if $e$ is a nonorientable loop then 
$(F,E-F,\emptyset )$,
$(F\cup\{e\},E-(F\cup\{e\}),\emptyset )$, 
$(F\cup\{f\},E-(F\cup\{e,f\}), \{e\}) \in \mathcal{Q}(\bG)$, giving that, respectively, each of 
$\bar{e}$, $\dott{e}$, $\hat{e}$ is in some basis of $Z(\bG)$.
This completes the proof of the proposition.
\end{proof}

\medskip

Above we described how to delete an edge $e$ of a ribbon graph $\bG$, resulting in a ribbon graph $\bG\ba e$. It is also possible to contract an edge $e$ of $\bG$, resulting in a ribbon graph denoted $\bG \con e$. We only use a few properties of contraction here: that every edge in a ribbon graph may be contracted,   $\bG \con e$ has one fewer edge than $\bG$, and that $\bG$ and $\bG \con e$ have identical boundary components.
A full appreciation of the definition of contraction is not necessary here, but for completeness we include one. If $u$ and $v$ are vertices incident to $e$ (it is possible that $u=v$ here), consider the boundary component(s) of $e\cup\{u,v\}$ as curves on $\bG$. For each resulting curve, attach a disc (which will form a vertex of $\bG\con e$) by identifying its boundary component with the curve. Delete $e$, $u$ and $v$ from the resulting complex, to get the ribbon graph $\bG\con e$.

\begin{lemma}\label{lem:newdcdc}
Let $\bG$ be a ribbon graph, $e$ be an edge of $\bG$ and $\omega_e=\{\dott{e},\bar{e},\hat{e}\}$ be its skew class. Then
\[  Z(\bG\con e) =  Z(\bG)|_{\dott{e}}, \quad
 Z(\bG\ba e) =  Z(\bG)|_{\bar{e}}, 
 \quad 
  Z(\bG^{\tau(e)} \con e ) =  Z(\bG)|_{\hat{e}}, 
\]
and if $\omega_e$ is  singular then
\[  Z(\bG)\ba \omega_e = Z(\bG \con e  )  = Z(\bG \ba e  ) =  Z(\bG^{\tau(e)} \con e  ).   \]
\end{lemma}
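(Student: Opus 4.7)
The plan is to prove the three equalities on the first line by verifying that the two sides of each have the same carrier and the same collection of bases; since a multimatroid is determined by its bases, as noted in Section~\ref{sec:multim}, this suffices. The carriers always agree on $U(\bG)-\omega_e$ with skew classes $\Omega-\{\omega_e\}$, so the work lies entirely in matching the bases. Throughout, I would use two standard facts about ribbon graph operations: partial Petrials, deletions, and contractions on pairwise distinct edges commute with one another, and contraction preserves the boundary component count $b$ (as stated in the preamble to the lemma).

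For $Z(\bG\con e)=Z(\bG)|_{\dott e}$, suppose first that $\dott e$ is non-singular, which by Proposition~\ref{prop:newsing} means $e$ is not a trivial orientable loop. By Lemma~\ref{lem:activitiesnonsingularprelim} and Theorem~\ref{thm:newrg3}, $T(X,Y,Z)$ is a basis of $Z(\bG)|_{\dott e}$ iff $b(\bG^{\tau(Z)}\ba Y)=k(\bG)$, while it is a basis of $Z(\bG\con e)$ iff $b((\bG\con e)^{\tau(Z)}\ba Y)=k(\bG\con e)$. Rewriting $(\bG\con e)^{\tau(Z)}\ba Y=(\bG^{\tau(Z)}\ba Y)\con e$ and using that contraction preserves $b$ equates the two $b$-values, and a short case analysis on the type of $e$ (non-loop, nonorientable loop, or non-trivial orientable loop) gives $k(\bG\con e)=k(\bG)$. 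The other two identities in their non-singular cases are handled in the same way; for $Z(\bG^{\tau(e)}\con e)=Z(\bG)|_{\widehat e}$ one additionally uses the identity $(\bG^{\tau(e)})^{\tau(Z)}=\bG^{\tau(Z\cup\{e\})}$ to align the two sides.

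The singular cases collapse via Proposition~\ref{prop:singminorsanddelclass}, which says $Z(\bG)|_u=Z(\bG)\ba\omega_e$ whenever $u\in\omega_e$ is singular. For instance, when $\dott e$ is singular, so $e$ is a trivial orientable loop, the bases of $Z(\bG)\ba\omega_e$ are the independent transversals $T(X,Y,Z)$ of $\Omega-\{\omega_e\}$, and adjoining the non-singular element $\overline e$ converts the basis condition to $b(\bG^{\tau(Z)}\ba(Y\cup\{e\}))=k(\bG)$. Since deleting a trivial orientable loop drops $b$ by one and contracting one raises $k$ by one, both this condition and the right-hand condition $b((\bG\con e)^{\tau(Z)}\ba Y)=k(\bG\con e)$ reduce to $b(\bG^{\tau(Z)}\ba Y)=k(\bG)+1$. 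The bridge case for $\overline e$ and the trivial nonorientable loop case for $\widehat e$ run along identical lines: in each, a $+1$ shift on one side (from bridge deletion, or from the Petrial converting a trivial nonorientable loop into a trivial orientable one before contraction) is matched by a $+1$ shift on the other.

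The second part of the lemma is then immediate: when $\omega_e$ is singular, Proposition~\ref{prop:singminorsanddelclass} gives $Z(\bG)\ba\omega_e=Z(\bG)|_u$ for every $u\in\omega_e$, and combining with the three identities already established produces the claimed four-way equality. The main obstacle will be the ribbon-graph bookkeeping in the singular cases, where one must verify that the $\pm 1$ shifts in $b$ and $k$ arising from bridges and from trivial orientable or nonorientable loops cancel on the two sides of each identity; this in turn rests on a careful understanding of how deletion, contraction, and partial Petrial interact with these special edge types.
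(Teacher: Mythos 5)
Your proposal is correct and takes essentially the same route as the paper's proof: both sides are compared by matching bases, the non-singular elements are handled through the interaction of deletion, contraction and partial Petrial with $b$ and $k$, and the singular cases are collapsed using Proposition~\ref{prop:singminorsanddelclass} together with a restriction at a non-singular element of $\omega_e$. The paper merely packages your explicit $\pm 1$ bookkeeping as direct equivalences of the form $(X,Y,Z)\in\mathcal{Q}(\bG\con e)$ if and only if $(X,Y\cup\{e\},Z)\in\mathcal{Q}(\bG)$ (and similarly for the other operations), so there is no substantive difference.
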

\begin{proof}
Let $e$ be an edge of a ribbon graph $\bG$.
If $e$ is not a bridge, then it is clear that $(X,Y,Z)\in \mathcal{Q}(\bG\ba e)$ if and only if $(X,Y\cup \{e\},Z)\in \mathcal{Q}(\bG)$, from which it easily follows that  $Z(\bG\ba e) =  Z(\bG)|_{\bar{e}}$. 
Next, if $e$ is not a trivial orientable loop, since $\bG$ and $\bG\con e$ have the same boundary components, we have 
$(X,Y,Z)\in \mathcal{Q}(\bG\con e)$ if and only if  $(X\cup \{e\},Y,Z)\in \mathcal{Q}(\bG)$, from which it follows that  $Z(\bG\con e) =  Z(\bG)|_{\dot{e}}$.
 Finally, again using the fact that contraction preserves boundary components, it is readily seen that if $e$ is not a trivial nonorientable loop, then
 $(X,Y,Z)\in \mathcal{Q}(\bG^{\tau(e)}\con e)$ if and only if $(X,Y,Z\cup\{e\})\in \mathcal{Q}(\bG)$, and it follows that $Z(\bG^{\tau(e)} \con e ) =  Z(\bG)|_{\hat{e}}$. 

To establish the outstanding cases in the first part of the lemma and to prove the second part, we may suppose that $e$ is either a bridge or a trivial loop. Equivalently, by Proposition~\ref{prop:newsing}, $\omega_e$ is singular in $Z(\mathbb G)$. We recall from Proposition~\ref{prop:singminorsanddelclass} that if $\omega$ is a singular skew class of a multimatroid $Z$, then for all $x$ in $\omega$, we have $Z|x = Z\ba \omega$.

If $e$ is a bridge, then 
$(X,Y,Z)\in \mathcal{Q}(\bG\ba e)$ if and only if $(X\cup \{e\},Y,Z)\in \mathcal{Q}(\bG)$. Thus $Z(\bG\ba e) =  Z(\bG)|_{\dott{e}}$, and as $\omega_e$ is singular, 
$Z(\bG)|_{\dott{e}} =  Z(\bG)|_{\bar{e}}$, giving 
$Z(\bG\ba e) = Z(\bG)|_{\bar{e}}$. Similarly,
if $e$ is a trivial orientable loop, then $(X,Y,Z)\in \mathcal{Q}(\bG\con e)$ if and only if $(X,Y\cup \{e\},Z)\in \mathcal{Q}(\bG)$. Thus 
$Z(\bG\con e) =  Z(\bG)|_{\bar{e}} =  Z(\bG)|_{\dott{e}}$. 
Finally, if $e$ is a trivial nonorientable loop then $(X,Y,Z)\in \mathcal{Q}(\bG^{\tau(e)}\con e)$ if and only if $(X,Y\cup \{e\},Z)\in \mathcal{Q}(\bG)$. Thus 
$Z(\bG^{\tau(e)}\con e) =  Z(\bG)|_{\bar{e}} =  Z(\bG)|_{\widehat{e}}$. 

Combining this with the first part of the proof completes the first part of the lemma. The second part then follows because 
$\omega_e$ is singular, so $Z(\bG)\ba \omega_e = Z|x$ for all $x$ in $\omega_e$.
\end{proof}

As shown in~\cite{MR2869185}, the topological transition polynomial  can be defined by the recursion relation (which holds for every edge $e$)
\begin{equation}\label{newdctopt}
Q(\bG)=\alpha_e \, Q(\bG\con e)+\beta_e \,Q( \bG\ba e)+\gamma_e\, Q(\bG^{\tau(e)} \con e)\end{equation} together with the boundary condition $Q(\bG)= t^{|V|}$ when $\bG=(V,\emptyset)$ is an edgeless ribbon graph. 
Here we have written  $Q(\bG)$  for $Q(\bG; (\boldsymbol\alpha, \boldsymbol\beta, \boldsymbol\gamma) , t)$ and also do this for $Q(\bG\con e)$, etc.\ in the obvious way.
A routine rewriting of Equation~\eqref{newdctopt} gives the following recursive definition of $Q(\bG)$.
\begin{equation}\label{newdctopt2}
Q(\bG)=
\begin{cases}
   t^{|V|} & \text{if $\bG=(V,\emptyset)$ is edgeless,}
   \\
 (\alpha_e+t\beta_e+\gamma_e) \,Q(\bG\con e) & \text{if $e$ is a bridge,}\\
  (t\alpha_e+\beta_e+\gamma_e)\, Q(\bG\ba e) & \text{if $e$ is a triv.\ orient.\ loop,}\\
   (\alpha_e+\beta_e+t\gamma_e) \,Q(\bG^{\tau(e)}\con e) & \text{if $e$ is a triv.\ nonorient.\ loop,}
   \\
   \alpha_e \, Q(\bG\con e)+\beta_e \,Q( \bG\ba e)+\gamma_e\, Q(\bG^{\tau(e)} \con e) &\text{otherwise.}
\end{cases}
\end{equation}

We now relate $Q(\bG)$ and $Q(Z(\bG))$.

\begin{theorem}\label{thm:new2trans}
let $\bG$ be a  ribbon graph, then 
\[ Q(\bG; (\boldsymbol\alpha, \boldsymbol\beta, \boldsymbol\gamma) , t)  = t^{k(\bG)}\cdot  Q(Z(\bG);\vect x, t)  ,\]
when $\alpha_e=\vect x_{\dott{e}}$, $\beta_e=\vect x_{\bar{e}}$, and $\gamma_e=\vect x_{\hat{e}}$ for each edge $e$ of $\bG$.
\end{theorem}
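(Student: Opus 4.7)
The plan is to prove the identity by induction on the number of edges of $\bG$, playing off the recursion~\eqref{newdctopt2} for $Q(\bG)$ against Proposition~\ref{prop:delcon} applied to $Z(\bG)$ and the identifications provided by Lemma~\ref{lem:newdcdc}.

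For the base case, suppose $\bG=(V,\emptyset)$. Then $Z(\bG)$ has empty carrier, so by the convention noted just after the definition of $Q$ we have $Q(Z(\bG);\vect x,t)=1$. Since $\bG$ has no edges, $k(\bG)=|V|$, and $Q(\bG)=t^{|V|}=t^{k(\bG)}\cdot 1$, as required.

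For the inductive step, pick any edge $e$ of $\bG$ and split into the four cases of~\eqref{newdctopt2}. If $e$ is neither a bridge nor a trivial loop, then by Proposition~\ref{prop:newsing} the skew class $\omega_e$ is non-singular in $Z(\bG)$, so Proposition~\ref{prop:delcon} yields
\[
Q(Z(\bG);\vect x,t)
 = \vect x_{\dott e}\,Q(Z(\bG)|\dott e;\vect x',t)
  +\vect x_{\bar e}\,Q(Z(\bG)|\bar e;\vect x',t)
  +\vect x_{\hat e}\,Q(Z(\bG)|\hat e;\vect x',t).
\]
By Lemma~\ref{lem:newdcdc}, the three elementary minors equal $Z(\bG\con e)$, $Z(\bG\ba e)$ and $Z(\bG^{\tau(e)}\con e)$ respectively. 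Since $e$ is neither a bridge nor a trivial loop, contraction, deletion and partial-Petrial-then-contraction each preserve the number of connected components, so $k(\bG)=k(\bG\con e)=k(\bG\ba e)=k(\bG^{\tau(e)}\con e)$. Multiplying through by $t^{k(\bG)}$, applying the inductive hypothesis to each of the three smaller ribbon graphs, and comparing with~\eqref{newdctopt} finishes this case.

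In the three remaining cases, Proposition~\ref{prop:newsing} tells us that $\omega_e$ is singular with a specific singular element (namely $\bar e$ if $e$ is a bridge, $\dott e$ if $e$ is a trivial orientable loop, $\hat e$ if $e$ is a trivial nonorientable loop). The singular clause of Proposition~\ref{prop:delcon} therefore gives, respectively, the factors $(\alpha_e+t\beta_e+\gamma_e)$, $(t\alpha_e+\beta_e+\gamma_e)$ and $(\alpha_e+\beta_e+t\gamma_e)$, multiplied by $Q(Z(\bG)\ba \omega_e;\vect x',t)$. Lemma~\ref{lem:newdcdc} then identifies $Z(\bG)\ba\omega_e$ with $Z(\bG\con e)$, $Z(\bG\ba e)$ and $Z(\bG^{\tau(e)}\con e)$ respectively, which matches the three singular lines of~\eqref{newdctopt2}. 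In each of these three cases the relevant reduction (contracting a bridge, or deleting/partially-dualising-and-contracting a loop) again preserves $k$, so multiplying by $t^{k(\bG)}$ and invoking the inductive hypothesis closes the argument.

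The main thing to watch is bookkeeping: one must verify component-by-component that the power of $t$ arising from $k(\bG)$ is unchanged under each of the three moves in each case (so that $t^{k(\bG)}$ passes cleanly through the recursion), and that the singular element picked out by Proposition~\ref{prop:newsing} is precisely the one whose coefficient is multiplied by $t$ in~\eqref{newdctopt2}. Both checks are routine but are the only places where a sign or symbol could easily go astray.
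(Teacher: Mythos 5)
Your strategy is the same as the paper's: play the one-edge recursion for $Q(\bG)$ against Proposition~\ref{prop:delcon} and Lemma~\ref{lem:newdcdc}, with an (explicit, in your case) induction on the number of edges. Your base case, your non-singular case, and the bridge and trivial orientable loop cases are fine: there the reductions used ($\bG\con e$ for a bridge, $\bG\ba e$ for a trivial orientable loop, and all three of $\bG\con e$, $\bG\ba e$, $\bG^{\tau(e)}\con e$ when $e$ is neither a bridge nor a trivial loop) do preserve the number of connected components, so $t^{k(\bG)}$ passes through the recursion as you say.

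The gap is in the trivial nonorientable loop case, and it is exactly the bookkeeping you flagged as ``routine''. If $e$ is a trivial nonorientable loop of $\bG$, then in $\bG^{\tau(e)}$ the edge $e$ is a trivial \emph{orientable} loop, and contracting it splits its incident vertex into two vertices; by triviality no cycle joins the two sides, so $k(\bG^{\tau(e)}\con e)=k(\bG)+1$, not $k(\bG)$. Concretely, for $\bG$ a single vertex with one nonorientable loop, $Q(\bG)=t(\alpha_e+\beta_e+t\gamma_e)$, whereas $\bG^{\tau(e)}\con e$ is two isolated vertices, so $(\alpha_e+\beta_e+t\gamma_e)\,Q(\bG^{\tau(e)}\con e)=t^{2}(\alpha_e+\beta_e+t\gamma_e)$: matching the third singular line of~\eqref{newdctopt2} verbatim while also asserting that $k$ is preserved cannot both be right, and with the correct count $k(\bG^{\tau(e)}\con e)=k(\bG)+1$ your inductive step produces $t^{k(\bG)}Q(Z(\bG);\vect x,t)=t^{-1}Q(\bG)$, off by a factor of $t$. (The displayed form of~\eqref{newdctopt2} is itself the source of the trouble in this line; the reduction there should be read as $\bG\con e$ or $\bG\ba e$.) The repair is straightforward: in this case use the second part of Lemma~\ref{lem:newdcdc} to write $Z(\bG)\ba\omega_e=Z(\bG\ba e)=Z(\bG\con e)$, both of which \emph{do} preserve $k$, and verify the corresponding one-edge identity $Q(\bG)=(\alpha_e+\beta_e+t\gamma_e)\,Q(\bG\ba e)$ (equivalently, note $Q(\bG^{\tau(e)}\con e)=t\,Q(\bG\ba e)$, the extra $t$ coming from the extra component created by the contraction). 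With that substitution your induction closes; as written, the step fails the very component-by-component check you prescribe.
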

\begin{proof}
First note that when $\bG$ is edgeless $Q(Z(\bG);\mathbf{x},t)=1$. 
Next  suppose  $e$ is an edge of $\bG$ and $\omega=\{\dott{e},\bar{e},\hat{e}\}$ is its corresponding skew class in $Z(\bG)$.

If $e$ is not a trivial loop or bridge, then by Proposition~\ref{prop:newsing} the skew class $\omega$ is non-singular. Then applying Proposition~\ref{prop:delcon} and Lemma~\ref{lem:newdcdc} gives  
\begin{align*}
Q(Z(\bG);\mathbf{x},t) &= x_{\dott{e}} Q(Z(\bG)|\dott{e};\mathbf{x},t) +x_{\bar{e}} Q(Z(\bG)|\bar{e};\mathbf{x},t) +x_{\hat{e}} Q(Z(\bG)|\hat{e};\mathbf{x},t)
\\
&= 
x_{\dott{e}} Q(Z(\bG\con e);\mathbf{x},t) +x_{\bar{e}} Q(Z(\bG\ba e);\mathbf{x},t) +x_{\hat{e}} Q(Z(\bG^{\tau(e)} \con e);\mathbf{x},t).
\end{align*}
Next if $e$ is a bridge, then by Proposition~\ref{prop:newsing} the element $\bar{e}$ is singular. 
By Proposition~\ref{prop:delcon} and Lemma~\ref{lem:newdcdc} we have 
\[Q(Z(\bG);\mathbf{x},t) 
= 
(tx_{\bar{e}} + x_{\dott{e}} +x_{\hat{e}})Q(Z(\bG)\ba \omega;\mathbf{x},t)
= 
(tx_{\bar{e}} + x_{\dott{e}} +x_{\hat{e}})Q(Z(\bG \con e);\mathbf{x},t)
\]
Similarly, when $e$ is a trivial orientable loop, 
\[Q(Z(\bG);\mathbf{x},t) 
= 
(tx_{\dott{e}} + x_{\bar{e}} +x_{\hat{e}})Q(Z(\bG \ba e);\mathbf{x},t),
\]
and when it is a trivial nonorientable loop, 
\[Q(Z(\bG);\mathbf{x},t) 
= 
(tx_{\hat{e}} + x_{\dott{e}} +x_{\bar{e}})Q(Z(\bG^{\tau(e)}\con e);\mathbf{x},t).
\]
By comparing the above identities for $Q(Z(\bG);\mathbf{x},t)$ with those in  Equation~\eqref{newdctopt2} it is readily seen that 
$t^{-k(\bG)}\cdot Q(\bG; (\boldsymbol\alpha, \boldsymbol\beta, \boldsymbol\gamma) , t) $ and  $Q(Z(\bG);\mathbf{x},t)$ define the same ribbon graph invariants when   $\alpha_e=\vect x_{\dott{e}}$, $\beta_e=\vect x_{\bar{e}}$, and $\gamma_e=\vect x_{\hat{e}}$.
\end{proof}

\begin{remark}
Unlike the relationship between the Tutte polynomials of a graph and its cycle matroid, the relationship between
$Q(\bG)$ and $Q(Z(\bG))$ involves a prefactor. To remove this necessity one might define $Q(\bG)$ differently, by using Equation~\eqref{newdctopt2} but 
setting its value to one on every edgeless ribbon graph, but then Equation~\eqref{newdctopt} would need to be modified to deal with the cases where $e$ is a bridge, trivial orientable loop or trivial nonorientable loop. For example, when $e$ is a bridge, $\beta_e$ would have to be replaced by $t\beta_e$ to align with Equation~\eqref{newdctopt2}.
\end{remark}
 
The boundary of a ribbon graph is a collection of closed curves.
Suppose that $\bG=(V,E)$ is a ribbon graph and let $e$ be an edge of $\bG$. 
Given $(X,Y,Z)$ in ${\mathcal P}_3(E)$ each edge $e$ of $\bG$
induces two arcs $c$ and $c'$ on the boundary of $\bG^{\tau(Z)}\ba Y$. 
If $e\in X\cup Z$, these are the two arcs on the boundary of $\bG^{\tau(Z)}\ba Y$ intersecting $e$; for $e\in Y$ the arcs $c$ and $c'$ are the arcs that correspond to the intersection of $e$ and its incident vertices (or vertex) in $\bG$. Moving $e$ from one of $X$, $Y$ or $Z$ to another has the effect of deleting the arcs $c$ and $c'$ to give two open curves and closing them with two new arcs joining the ends of $c$ and $c'$. This may change the number of closed curves by at most one. 

\begin{lemma}\label{lem:activeinterlace}
    Let $\bG$ be a ribbon graph, let $(X,Y,Z)$ be in $\mathcal Q(\bG)$ and let $B:=T(X,Y,Z)$. Then distinct edges $e$ and $f$ of $\bG$ are $(X,Y,Z)$-interlaced if and only if $C(B,\omega_e)$ meets $\omega_f$. 

    Thus an edge $e$ of $\bG$ is active with respect to $(X,Y,Z)$ if and only if $\omega_e$ is active with respect to $B$.
\end{lemma}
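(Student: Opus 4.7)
The plan is to deduce the second assertion as an easy consequence of the first, and then to establish the first assertion by reducing to the corresponding delta-matroid result, Lemma~\ref{lem:activities}, via the natural delta-matroid $D(\bG^{\tau(Z)})$.

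For the second assertion, I would argue as follows. The $3$-matroid $Z(\bG)$ is tight by Theorem~\ref{thm:newrg3}, so by the characterisation of tightness the fundamental circuit $C(B,\omega_e)$ always exists. Therefore $\omega_e$ is active with respect to $B$ if and only if $\min(C(B,\omega_e))\in\omega_e$, equivalently no skew class $\omega_f$ with $f\prec e$ meets $C(B,\omega_e)$. By the first assertion, this is in turn equivalent to $e$ not being $(X,Y,Z)$-interlaced with any edge of lower order, i.e.\ $e$ being active with respect to $(X,Y,Z)$.

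For the first assertion, set $\bG':=\bG^{\tau(Z)}$ and $F:=X\cup Z$. The condition $(X,Y,Z)\in\mathcal{Q}(\bG)$ says exactly that $\bG'|_F=\bG^{\tau(Z)}\ba Y$ is a quasi-tree, i.e.\ that $F$ is feasible in $D(\bG')$. Moreover the curve around which $(X,Y,Z)$-interlacement is defined is precisely the boundary of this quasi-tree, so $e$ and $f$ are $(X,Y,Z)$-interlaced in $\bG$ if and only if $e$ is $F$-interlaced with $f$ in the delta-matroid $D(\bG')$, in the sense of Section~\ref{sec:dm}. By Lemma~\ref{lem:activities}, applied to $D(\bG')$, this is equivalent to $C(T(F),\omega^{(2)}_e)\cap\omega^{(2)}_f\neq\emptyset$ in the $2$-matroid $Z(D(\bG'))$.

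It remains to transfer the fundamental-circuit condition from $Z(D(\bG'))$ to $Z(\bG)$. The natural map $\phi$ identifying the carrier of $Z(D(\bG'))$ with the sub-carrier of $Z(\bG)$ consisting of $\widetilde\omega_e=\{\dott e,\overline e\}$ for $e\notin Z$ and $\widetilde\omega_e=\{\widehat e,\overline e\}$ for $e\in Z$ sends $T(F)$ to $B$ and bases to bases, hence circuits to circuits; so whenever $\underline B_{\omega_e}$ already lies in $\widetilde\omega_e$, Proposition~\ref{prop:onlyonecircuit} forces $C(B,\omega_e)$ and $\phi(C(T(F),\omega^{(2)}_e))$ to coincide, and the two conditions are the same. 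The hard part of the argument will be the remaining case, where $\underline B_{\omega_e}$ is the third element of $\omega_e$ lying outside the image of $\phi$. My plan for that case is to use Lemma~\ref{lem:blah} to rephrase the condition $B_{\omega_f}\in C(B,\omega_e)$ as the existence of a two-element basis exchange, then to realise this exchange by working in $D(\bG^{\tau(A)})$ for a suitably chosen partial Petrial set $A$ (where now the relevant $\underline B_{\omega_e}$ is captured by the corresponding $\phi_A$), and to use the identity $D(\bG^{\tau(A)})=D(\bG)+A$ together with the observation that the topological interlacement relation depends only on the boundary of the quasi-tree $\bG^{\tau(Z)}\ba Y$, and is therefore unchanged under this reparameterisation. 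Combining this with the previous paragraph completes the proof.
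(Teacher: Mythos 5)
Your deduction of the second assertion from the first is fine and matches the paper. The problem is the first assertion, where your argument has a genuine gap at exactly the point where the real work lies. You assert in one sentence that $e$ and $f$ are $(X,Y,Z)$-interlaced in $\bG$ if and only if $e$ is $F$-interlaced with $f$ in $D(\bG^{\tau(Z)})$ with $F=X\cup Z$, ``since the curve is precisely the boundary of this quasi-tree''. But the delta-matroid notion of $F$-interlacement from Section~\ref{sec:dm} requires $e$ to be $F$-orientable, i.e.\ $F\bigtriangleup\{e\}$ infeasible, whereas the topological cyclic-order condition $efef$ does not, so the asserted equivalence is false as stated. Concretely, let $\bG$ have one vertex and two interlaced nonorientable loops $e,f$, and take $X=\{e\}$, $Y=\{f\}$, $Z=\emptyset$, so $F=\{e\}$: travelling around the single boundary component of $\bG|_{\{e\}}$ one meets $e$ and $f$ alternately, so they are $(X,Y,Z)$-interlaced (and indeed $C(B,\omega_e)=\{\widehat e,\overline f\}$ meets $\omega_f$, as the lemma claims), yet $F\bigtriangleup\{e\}=\emptyset$ is feasible, so $e$ is not $F$-orientable and hence not $F$-interlaced with $f$ in $D(\bG^{\tau(Z)})$, and Lemma~\ref{lem:activities} says nothing here because the fundamental circuit at $\omega_e$ does not exist in the $2$-matroid $Z(D(\bG^{\tau(Z)}))$. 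Even in the case where $e$ is $F$-orientable, the identification of the cyclic order $efef$ along $\partial$ with feasibility of $F\bigtriangleup\{e,f\}$ is precisely the topological content of the lemma; it is not proved anywhere earlier in the paper, so invoking it is circular. The paper's proof supplies exactly this missing step: using tightness it moves $e$ in the unique ``bad'' direction, analyses how the boundary component $\partial$ is replaced by two components $\partial'$ and $\partial''$, shows that $f$ is interlaced with $e$ if and only if $f$ meets both $\partial'$ and $\partial''$, translates that into the existence of a double basis exchange, and only then applies Lemma~\ref{lem:blah}.

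Your plan for the ``hard case'' (where $\underline B_{\omega_e}$ is the third element of $\omega_e$) also cannot work as described. Every $2$-matroid of the form $Z(D(\bG^{\tau(A)}))$ has, at the skew class of $e$, the two states ``$e$ in the edge subset'' and ``$e$ deleted''; the element $\overline e$ is always one of the two. But the hard case is exactly the one in which $B_{\omega_e}$ and $\underline B_{\omega_e}$ are $\dott e$ and $\widehat e$ (both ``in'' states), so no choice of partial Petrial set $A$ produces a $2$-matroid in which this skew pair is visible, and Lemma~\ref{lem:activities} cannot be brought to bear at $\omega_e$ by reparameterisation. Moreover the claim that topological interlacement is ``unchanged under this reparameterisation'' needs justification: the partial Petrial at $e$ changes the ribbon graph and hence its boundary curves, so the cyclic order of edges along the relevant boundary component is not a priori preserved. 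What is missing in both cases is the same thing: a direct topological argument relating the order $efef$ on $\partial$ to the effect on boundary components of the two possible moves of $f$, which is what the paper's $\partial',\partial''$ analysis together with Lemma~\ref{lem:blah} provides.
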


\begin{proof}
The boundary of $\bG^{\tau(Z)}\ba Y$ has $k(\bG)$ components.  
Let $c$ and $c'$ be the two arcs on the boundary of $\bG^{\tau(Z)}\ba Y$ corresponding to $e$. Then the tightness of $Z_3(\bG)$ implies first that $C(B,\omega_e)$ exists and second that (exactly)
one of the two ways of moving $e$ from $X$, $Y$ or $Z$ to another yields a partition $(X',Y',Z')$ with $b(\bG^{\tau(Z')}\ba Y')=k(\bG)+1$. (One can also see this by considering the ways in which the ends of the arcs $c$ and $c'$ may be joined by a pair of arcs.)   

The boundary components of $\bG^{\tau(Z)} \ba Y$ and $\bG^{\tau(Z')} \ba Y'$ are identical except that
$\bG^{\tau(Z)} \ba Y$ has a boundary component $\partial$ that is not in $\bG^{\tau(Z')} \ba Y'$, and $\bG^{\tau(Z')} \ba Y'$ has two boundary components $\partial'$ and $\partial''$ that are not in $\bG^{\tau(Z)} \ba Y$.

We shall say that the edge $f$ of $\bG$ \emph{meets} $\partial'$ (respectively, $\partial''$)
if in $\bG^{\tau(Z')} \ba Y'$ either $f\in X'\cup Z'$ and
$f$ intersects $\partial'$ (respectively, $\partial''$) or $f\in Y'$
and $\partial'$ (respectively, $\partial''$) meets an arc on a vertex of $\bG^{\tau(Z')} \ba Y'$ that was an end of $f$. 

The edge $f$ is interlaced with $e$ if and only if $f$ meets both $\partial'$ and $\partial''$. If $f$ does not meet both $\partial'$ and $\partial''$, then no way of moving $f$ from one of $X'$, $Y'$ or $Z'$ to another yields a partition $(X'',Y'',Z'')$ with $b(\bG^{\tau(Z'')}\ba Y'')=k(\bG)$ as either $\partial'$ or $\partial''$ is unaffected such a move. 
If $f$ does meet both $\partial'$ and $\partial''$, then there is a way (in fact both possibilities work) of moving $f$ from one of $X'$, $Y'$ or $Z'$ to another to yield a partition $(X'',Y'',Z'')$ with $b(\bG^{\tau(Z'')}\ba Y'')=k(\bG)$. 

Thus we see that $f$ is interlaced with $e$ if and only if there is an element $x$ of $\omega_f$ such that $B\bigtriangleup \{\underline{B_{\omega_e}}, B_{\omega_e}, B_{\omega_f}, x\} \in \mathcal B(Z_3)$. It follows from Lemma~\ref{lem:blah} that $f$ is interlaced with $e$ if and only if $C(B,\omega_e)$ meets $\omega_f$.

The final part of the lemma follows immediately.
\end{proof}

Our final lemma before proving Theorem~\ref{newtt1} enables us to identify the elements $\underline B_{\omega_e}$. Given $(X,Y,Z)$ in $\mathcal Q(\bG)$, with say $e\in X$, then tightness of $Z(\bG)$ implies that precisely one of $(X-\{e\},Y\cup\{e\},Z)$ and $(X-\{e\},Y,Z\cup\{e\})$ is in $\mathcal Q(\bG)$, but we need to know which one.

\begin{lemma}\label{lem:whichisthebadguy}
    Let $\bG$ be a ribbon graph with edge $e$ and let $(X,Y,Z)$ be in $\mathcal Q(\bG)$. 
\begin{enumerate}
    \item If $e\in X$, then $(X-\{e\},Y\cup\{e\},Z) \notin \mathcal Q(\bG)$ if and only if $e$ is $(X,Y,Z)$-orientable. 
    \item If $e\in Y$, then $(X\cup\{e\},Y-\{e\},Z) \notin \mathcal Q(\bG)$ if and only if $e$ is $(X,Y,Z)$-orientable.
  
    \item  If   $e\in Z$  
    then $(X,Y\cup\{e\},Z-\{e\}) \notin \mathcal Q(\bG)$ if and only if $e$ is $(X,Y,Z)$-orientable.
\end{enumerate}    
\end{lemma}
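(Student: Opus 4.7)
The plan is to prove all three parts uniformly via a local analysis at the edge $e$. Orient $e$ from a vertex $v_1$ to a vertex $v_2$ (possibly $v_1 = v_2$). The rectangular disc $e$ meets the incident vertex discs in two end-segments whose four endpoints I label $p_1, q_1 \in \partial v_1$ and $p_2, q_2 \in \partial v_2$. The three possible states for $e$ contribute three local matchings on these four \emph{ports}, describing which pairs are joined by an arc of the boundary of $\bG^{\tau(Z')} \ba Y'$ in a small neighborhood $N$ of $e$:
\[ M_X = \{p_1p_2,\; q_1 q_2\}, \quad M_Y = \{p_1 q_1,\; p_2 q_2\}, \quad M_Z = \{p_1 q_2,\; q_1 p_2\}. \]
Outside $N$, the boundary pairs the same four ports into an \emph{external} matching $M$ that depends only on the ribbon subgraph away from $e$, and so is independent of the state of $e$. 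If $k_0$ counts those global boundary components that do not meet $N$, then for every state $s\in\{X,Y,Z\}$ the number of boundary components $b_s$ of the corresponding subgraph equals $k_0$ plus the number of cycles of the $2$-regular graph $M \cup M_s$; this number is $2$ if $M = M_s$ and $1$ otherwise, since there are only three perfect matchings on four elements.

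Let $s$ denote the current state of $e$, so $b_s = k(\bG)$. If $M = M_s$ then $k_0 = k(\bG) - 2$, and moving $e$ to any $t \neq s$ would give $b_t = k_0 + 1 = k(\bG) - 1$, contradicting $b_t \geq k(\bG^{\tau(Z')} \ba Y') \geq k(\bG)$. Hence $M \neq M_s$, and $M$ equals exactly one of the two remaining matchings. For this unique state $t^{*}$ we obtain $b_{t^{*}} = k_0 + 2 = k(\bG) + 1$ (so the move leaves $\mathcal Q(\bG)$), while for the third state $t'$ we have $b_{t'} = k_0 + 1 = k(\bG)$ (so the move stays inside $\mathcal Q(\bG)$).

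It remains, in each case of the lemma, to identify the specified target state with $t^{*}$ precisely when $e$ is $(X,Y,Z)$-orientable. Because $(X,Y,Z) \in \mathcal Q(\bG)$, the arcs $c, c'$ lie on a common boundary component $\partial$ (namely the unique boundary component of the connected component of $\bG^{\tau(Z)} \ba Y$ that meets $N$). Orient $\partial$ and trace the unique $4$-cycle of $M \cup M_s$, recording the direction each of $c, c'$ inherits from $\partial$. Compare these with the \emph{inherent} directions, which come from a consistent counter-clockwise traversal of the boundary of the rectangular disc $e$ (applied with the half-twist when $s = Z$, and with the end-segments $\ell_1, \ell_2$ playing the role of the two long sides when $s = Y$). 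A direct case check over the three (current state, target state) pairs occurring in the three parts confirms that the target is $t^{*}$ exactly when both of $c, c'$ agree with $\partial$ or both disagree, that is, when $e$ is $(X,Y,Z)$-orientable.

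The main technical subtlety is fixing a coherent convention for the inherent direction on each of the six possible arcs: notably, that the two long sides of an oriented edge-rectangle inherit opposite directions under a counter-clockwise traversal, and that this swap must be propagated correctly through the partial Petrial on $e$. Once the convention is pinned down, the remaining cycle-tracing is a finite enumeration.
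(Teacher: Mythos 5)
Your argument is correct and is essentially the paper's own proof in more explicit form: the paper likewise proves the lemma by a local analysis of the two boundary arcs at $e$ and the effect of deleting them and reconnecting their ends, which is exactly what your port-matching bookkeeping (the three local matchings $M_X,M_Y,M_Z$ versus the fixed external matching, plus the count $b_s=k_0+\#\mathrm{cycles}$) formalizes, including the conclusion that exactly one of the two moves leaves $\mathcal{Q}(\bG)$. The final orientation comparison you defer to a finite case check does go through with the convention you flag (arcs inheriting opposite directions from a traversal of the edge-disc boundary, adjusted for the partial Petrial), which is the same level of detail at which the paper itself leaves the verification.
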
    
\begin{proof}
In each case the result follows by considering the arcs $c$ and $c'$ intersecting $e$ on the closed curve which is the boundary component of $\bG^{\tau(Z)}\ba Y$  containing $e$ or its ends, and the effect of deleting $c$ and $c'$ and reconnecting their ends. 
\end{proof}
 
\begin{proof}[Proof of Theorem~\ref{newtt1}]
Suppose that $\bG=(V,E)$ is a ribbon graph  and $Z(\bG)$ is its 3-matroid. We shall recover Theorem~\ref{newtt1} by expressing  Theorem~\ref{thm:main1} directly in terms of $\bG$.

It follows from Theorems~\ref{thm:new2trans} and~\ref{thm:main1} that
\[ Q(\bG;(\mathbf{\alpha}, \mathbf{\beta},\mathbf{\gamma}) = t^{k(\bG)}
\sum_{B\in \mathcal{B}(Z(\bG))}
\prod_{\omega \in \inact_{\prec}(B)} \vect x_{B_{\omega}}\prod_{\omega \in \act_{\prec}(B)}\left(  \frac {t\vect x_{\underline B_{\omega}}} {2}+\vect  x_{B_{\omega}} \right),\]
where $\mathbf x_{\dott{e}}=\alpha_e$,
$\mathbf x_{\overline{e}}=\beta_e$ and
$\mathbf x_{\dott{e}}=\gamma_e$.

It remains to interpret all the terms on the right side of the previous equation in terms of $\bG$.
Let $(X,Y,Z)$ be in $\mathcal{Q}(\bG)$, let $B:=T(X,Y,Z)$ and let $e$ be an edge of $\bG$. We consider the term in the product contributed by $e$ (or equivalently $\omega_e$) to the term  in the sum corresponding to $(X,Y,Z)$ (or equivalently $B$).

\begin{itemize}
    \item If $e$ is $(X,Y,Z)$-inactive then by Lemma~\ref{lem:activeinterlace} $\omega_e$ is inactive with respect to $B$ and the contribution of $e$ is $\alpha_e$, $\beta_e$ or $\gamma_e$, if $e\in X$, $e\in Y$ or $e\in Z$, respectively.
    \item If $e$ is active orientable, then by Lemma~\ref{lem:activeinterlace} $\omega_e$ is active with respect to $B$ and 
    it follows from Lemma~\ref{lem:whichisthebadguy} that
    $\underline B_{\omega_e}= \overline e$,
    $\underline B_{\omega_e}= \dott e$ and $\underline B_{\omega_e}= \overline e$, if $e\in X$, $e\in Y$ or $e\in Z$, respectively.

So the contribution of $e$ is $\frac{t\beta_e}2+ \alpha_e$,
$\frac{t\alpha_e}2+ \beta_e$ or
$\frac{t\beta_e}2+ \gamma_e$
if $e\in \AO(X)$, $e\in \AO(Y)$ or $e\in \AO(Z)$, respectively.

    \item If $e$ is active nonorientable, then by Lemma~\ref{lem:activeinterlace} $\omega_e$ is active with respect to $B$ and it follows from Lemma~\ref{lem:whichisthebadguy} that $\underline B_{\omega_e}= \widehat e$,
    $\underline B_{\omega_e}= \widehat e$ and $\underline B_{\omega_e}= \dott e$, if $e\in X$, $e\in Y$ or $e\in Z$, respectively.

So the contribution of $e$ is $\frac{t\gamma_e}2+ \alpha_e$,
$\frac{t\gamma_e}2+ \beta_e$ or
$\frac{t\alpha_e}2+ \gamma_e$
if $e\in \AN(X)$, $e\in \AN(Y)$ or $e\in \AN(Z)$,
respectively.
\end{itemize}
 \end{proof}

\bibliography{multimatroid-result}

\begin{thebibliography}{10}

\bibitem{MR1428870}
{\sc M.~Aigner}, {\em The {P}enrose polynomial of a plane graph}, Math. Ann.,
  307 (1997), pp.~173--189, \url{https://doi.org/10.1007/s002080050030}.

\bibitem{MR1851080}
{\sc B.~Bollob{\'a}s and O.~Riordan}, {\em A polynomial invariant of graphs on
  orientable surfaces}, Proc. Lond. Math. Soc. (3), 83 (2001), pp.~513--531,
  \url{https://doi.org/10.1112/plms/83.3.513}.

\bibitem{MR1906909}
{\sc B.~Bollob{\'a}s and O.~Riordan}, {\em A polynomial of graphs on surfaces},
  Math. Ann., 323 (2002), pp.~81--96,
  \url{https://doi.org/10.1007/s002080100297}.

\bibitem{MR904585}
{\sc A.~Bouchet}, {\em Greedy algorithm and symmetric matroids}, Math.
  Programming, 38 (1987), pp.~147--159,
  \url{https://doi.org/10.1007/BF02604639}.

\bibitem{zbMATH04045764}
{\sc A.~Bouchet}, {\em Isotropic systems}, Eur. J. Comb., 8 (1987),
  pp.~231--244, \url{https://doi.org/10.1016/S0195-6698(87)80027-6}.

\bibitem{zbMATH04162893}
{\sc A.~Bouchet}, {\em Representability of {{\(\Delta\)}}-matroids}.
\newblock Combinatorics, {Proc}. 7th {Hung}. {Colloq}., {Eger}/{Hung}. 1987,
  {Colloq}. {Math}. {Soc}. {J{\'a}nos} {Bolyai} 52, 167-182 (1988)., 1988.

\bibitem{zbMATH04185622}
{\sc A.~Bouchet}, {\em Maps and {{\(\Delta\)}}-matroids}, Discrete Math., 78
  (1989), pp.~59--71, \url{https://doi.org/10.1016/0012-365X(89)90161-1}.

\bibitem{MM1zbMATH01116184}
{\sc A.~Bouchet}, {\em Multimatroids. {I}: {Coverings} by independent sets},
  SIAM J. Discrete Math., 10 (1997), pp.~626--646,
  \url{https://doi.org/10.1137/S0895480193242591}.

\bibitem{MM2zbMATH01119073}
{\sc A.~Bouchet}, {\em Multimatroids. {II}: {Orthogonality}, minors and
  connectivity}, Electron. J. Comb., 5 (1998), pp.~research paper r8, 25,
  \url{https://doi.org/10.37236/1346}.

\bibitem{MM3zbMATH01648954}
{\sc A.~Bouchet}, {\em Multimatroids. {III}: {Tightness} and fundamental
  graphs}, Eur. J. Comb., 22 (2001), pp.~657--677,
  \url{https://doi.org/10.1006/eujc.2000.0486}.

\bibitem{zbMATH05982480}
{\sc R.~Brijder and H.~J. Hoogeboom}, {\em The group structure of pivot and
  loop complementation on graphs and set systems}, Eur. J. Comb., 32 (2011),
  pp.~1353--1367, \url{https://doi.org/10.1016/j.ejc.2011.03.002}.

\bibitem{zbMATH06181926}
{\sc R.~Brijder and H.~J. Hoogeboom}, {\em Nullity and loop complementation for
  delta-matroids}, SIAM J. Discrete Math., 27 (2013), pp.~492--506,
  \url{https://doi.org/10.1137/110854692}.

\bibitem{MR3191496}
{\sc R.~Brijder and H.~J. Hoogeboom}, {\em Interlace polynomials for
  multimatroids and delta-matroids}, European J. Combin., 40 (2014),
  pp.~142--167, \url{https://doi.org/10.1016/j.ejc.2014.03.005}.

\bibitem{But}
{\sc C.~Butler}, {\em A quasi-tree expansion of the {Krushkal} polynomial},
  Adv. Appl. Math., 94 (2018), pp.~3--22,
  \url{https://doi.org/10.1016/j.aam.2016.06.003}.

\bibitem{MR2854567}
{\sc A.~Champanerkar, I.~Kofman, and N.~Stoltzfus}, {\em Quasi-tree expansion
  for the {Bollob{\'a}s}-{Riordan}-{Tutte} polynomial}, Bull. Lond. Math. Soc.,
  43 (2011), pp.~972--984, \url{https://doi.org/10.1112/blms/bdr034}.

\bibitem{zbMATH04070920}
{\sc R.~Chandrasekaran and S.~Kabadi}, {\em Pseudomatroids}, Discrete Math., 71
  (1988), pp.~205--217, \url{https://doi.org/10.1016/0012-365X(88)90101-X}.

\bibitem{CMNR1}
{\sc C.~Chun, I.~Moffatt, S.~D. Noble, and R.~Rueckriemen}, {\em Matroids,
  delta-matroids and embedded graphs}, J. Comb. Theory, Ser. A, 167 (2019),
  pp.~7--59, \url{https://doi.org/10.1016/j.jcta.2019.02.023}.

\bibitem{CMNR2}
{\sc C.~Chun, I.~Moffatt, S.~D. Noble, and R.~Rueckriemen}, {\em On the
  interplay between embedded graphs and delta-matroids}, Proc. Lond. Math. Soc.
  (3), 118 (2019), pp.~675--700, \url{https://doi.org/10.1112/plms.12190}.

\bibitem{MR262095}
{\sc H.~H. Crapo}, {\em The {T}utte polynomial}, Aequationes Math., 3 (1969),
  pp.~211--229, \url{https://doi.org/10.1007/BF01817442}.

\bibitem{zbMATH03821756}
{\sc J.~E. Dawson}, {\em A construction for a family of sets and its
  application to matroids}.
\newblock Lect. {Notes} {Math}. 884, 136-147 (1981)., 1981,
  \url{https://doi.org/doi:10.1007/BFb0091815}.

\bibitem{zbMATH03985246}
{\sc A.~Dress and T.~F. Havel}, {\em Some combinatorial properties of
  discriminants in metric vector spaces}, Adv. Math., 62 (1986), pp.~285--312,
  \url{https://doi.org/10.1016/0001-8708(86)90104-0}.

\bibitem{MR2869185}
{\sc J.~A. Ellis-Monaghan and I.~Moffatt}, {\em Twisted duality for embedded
  graphs}, Trans. Amer. Math. Soc., 364 (2012), pp.~1529--1569,
  \url{https://doi.org/10.1090/S0002-9947-2011-05529-7}.

\bibitem{MR3086663}
{\sc J.~A. Ellis-Monaghan and I.~Moffatt}, {\em Graphs on surfaces},
  SpringerBriefs in Mathematics, Springer, New York, 2013,
  \url{https://doi.org/10.1007/978-1-4614-6971-1}.

\bibitem{MR2994409}
{\sc J.~A. Ellis-Monaghan and I.~Moffatt}, {\em A {P}enrose polynomial for
  embedded graphs}, European J. Combin., 34 (2013), pp.~424--445,
  \url{https://doi.org/10.1016/j.ejc.2012.06.009}.

\bibitem{zbMATH07553843}
{\sc J.~A. Ellis-Monaghan and I.~Moffatt}, eds., {\em Handbook of the {Tutte}
  polynomial and related topics}, Boca Raton, FL: CRC Press, 2022,
  \url{https://doi.org/10.1201/9780429161612}.

\bibitem{MR1980048}
{\sc J.~A. Ellis-Monaghan and I.~Sarmiento}, {\em Generalized transition
  polynomials}, Congr. Numerantium, 155 (2002), pp.~57--69.

\bibitem{Farr-Chapter}
{\sc G.~Farr}, {\em Handbook of the {Tutte} polynomial and related topics},
  Boca Raton, FL: CRC Press, 2022, ch.~The history of {Tutte--Whitney}
  polynomials, p.~46 pages, \url{https://doi.org/10.1201/9780429161612-34}.

\bibitem{CPC}
{\sc S.~Huggett and I.~Moffatt}, {\em Types of embedded graphs and their
  {Tutte} polynomials}, Math. Proc. Camb. Philos. Soc., 169 (2020),
  pp.~255--297, \url{https://doi.org/10.1017/S0305004119000161}.

\bibitem{MR1096990}
{\sc F.~Jaeger}, {\em On transition polynomials of {$4$}-regular graphs}, in
  Cycles and rays ({M}ontreal, {PQ}, 1987), vol.~301 of NATO Adv. Sci. Inst.
  Ser. C Math. Phys. Sci., Kluwer Acad. Publ., Dordrecht, 1990, pp.~123--150,
  \url{https://doi.org/10.1007/978-94-009-0517-7_12}.

\bibitem{zbMATH07347313}
{\sc M.~Kochol}, {\em Interpretations of the {Tutte} and characteristic
  polynomials of matroids}, J. Algebr. Comb., 53 (2021), pp.~1--9,
  \url{https://doi.org/10.1007/s10801-019-00914-6}.

\bibitem{zbMATH07725327}
{\sc M.~Kochol}, {\em One-to-one correspondence between interpretations of the
  {Tutte} polynomials}, J. Comb. Theory, Ser. B, 162 (2023), pp.~134--143,
  \url{https://doi.org/10.1016/j.jctb.2023.05.002}.

\bibitem{zbMATH07307106}
{\sc I.~Moffatt}, {\em Delta-matroids for graph theorists}, in Surveys in
  combinatorics 2019, Cambridge: Cambridge University Press, 2019,
  pp.~167--220, \url{https://doi.org/10.1017/9781108649094.007}.

\bibitem{zbMATH07067836}
{\sc A.~Morse}, {\em Interlacement and activities in delta-matroids}, Eur. J.
  Comb., 78 (2019), pp.~13--27,
  \url{https://doi.org/10.1016/j.ejc.2019.01.005}.

\bibitem{me}
{\sc S.~Noble}, {\em A primer on delta-matroids and multimatroids}, in 2023
  MATRIX Annals, MATRIX Book Series, Springer, Provisionally accepted, p.~29
  pages.

\bibitem{MR1207587}
{\sc J.~G. Oxley}, {\em Matroid theory}, vol.~21 of Oxf. Grad. Texts Math.,
  Oxford: Oxford University Press, 2nd ed.~ed., 2011,
  \url{https://doi.org/10.1093/acprof:oso/9780198566946.001.0001}.

\bibitem{zbMATH07814958}
{\sc L.~Pierson}, {\em On the compatible sets expansion of the {Tutte}
  polynomial}, Ann. Comb., 28 (2024), pp.~33--42,
  \url{https://doi.org/10.1007/s00026-023-00657-z}.

\bibitem{Tutte-thesis}
{\sc W.~Tutte}, {\em An Algebraic Theory of Graphs}, {P}h{D} thesis, Cambridge
  University, 1948.

\bibitem{zbMATH03087501}
{\sc W.~T. Tutte}, {\em A contribution to the theory of chromatic polynomials},
  Can. J. Math., 6 (1954), pp.~80--91,
  \url{https://doi.org/10.4153/CJM-1954-010-9}.

\bibitem{MR2780852}
{\sc F.~Vignes-Tourneret}, {\em Non-orientable quasi-trees for the
  {Bollob{\'a}s}-{Riordan} polynomial}, Eur. J. Comb., 32 (2011), pp.~510--532,
  \url{https://doi.org/10.1016/j.ejc.2010.12.005}.

\bibitem{MR1503085}
{\sc H.~Whitney}, {\em The coloring of graphs}, Ann. of Math. (2), 33 (1932),
  pp.~688--718, \url{https://doi.org/10.2307/1968214}.

\bibitem{MR547621}
{\sc S.~E. Wilson}, {\em Operators over regular maps}, Pac. J. Math., 81
  (1979), pp.~559--568, \url{https://doi.org/10.2140/pjm.1979.81.559}.

\end{thebibliography}
\bibliographystyle{siamplain}

\end{document}